\theoremstyle{theorem} 
\newtheorem{thm}{Theorem}[section]
\newtheorem{lemma}[thm]{Lemma}
\newtheorem{prop}[thm]{Proposition}
\newtheorem{assumption}[thm]{Assumption}
\theoremstyle{definition}   
\newtheorem{defn}[thm]{Definition}
\theoremstyle{remark}  
\newtheorem{remark}[thm]{Remark}
\newtheorem{example}[thm]{Example}
\newcommand\bR{\mathbb{R}}
\newcommand\bS{\mathbb{S}}
\newcommand\bE{\mathbb{E}}
\newcommand\bN{\mathbb{N}}
\newcommand\bP{\mathbb{P}}
\newcommand\cA{\mathcal{A}}
\newcommand\cB{\mathcal{B}}
\newcommand\cD{\mathcal{D}}
\newcommand\cL{\mathcal{L}}
\newcommand\cP{\mathcal{P}}
\newcommand\cM{\mathcal{M}}
\newcommand\cO{\mathcal{O}}
\newcommand\cV{\mathcal{V}}
\newcommand{\mysection}[1]{\section{#1}
\setcounter{equation}{0}}
\begin{document}

\title[]
{Green's function estimates for time measurable parabolic operators on polyhedrons and  polyhedral cones}
 
\thanks{The first author was supported by the National Research Foundation of Korea(NRF) grant
funded by the Korea government(MSIT) (RS-2025-00556160).
The third author has been supported by a KIAS Individual Grant (MG095801) at Korea Institute for Advanced Study, and by the National Research Foundation of Korea (NRF-2019R1A5A1028324).}

\author{Kyeong-Hun Kim}
\address{Kyeong-Hun Kim, Department of Mathematics, Korea University, Anam-ro 145, Sungbuk-gu, Seoul, 02841, Republic of Korea}
\email{kyeonghun@korea.ac.kr}

\author{Kijung Lee}
\address{Kijung Lee, Department of Mathematics, Ajou University, Worldcup-ro 206, Yeongtong-gu, Suwon, 16499, Republic of Korea}
\email{kijung@ajou.ac.kr}

\author{jinsol Seo}
\address{Jinsol Seo, School of Mathematics, Korea Institute for Advanced Study, Hoegi-ro 85, Dongdaemun-gu, Seoul, 02455, Republic of Korea}
\email{seo9401@kias.re.kr}

\subjclass[2020]{34B27, 35K08, 35K20, 60H15}

\keywords{Green's function estimate, polyhedron, polyhedral cone, parabolic operator, mixed weight}

\begin{abstract}

We provide  Green's function estimates for parabolic operators on polyhedrons and  polyhedral cones in $\bR^3$.
 These estimates incorporate mixed weights, which include appropriate powers of the distances to the vertices, the edges, and the boundary of the domains. The allowable ranges for the weight parameters are explicitly determined by the geometry of the domains.
\end{abstract}

\maketitle

\mysection{Introduction}\label{sec:Introduction}

In this article we provide  Green's function  estimates for  the parabolic operator 
\begin{equation*}
\cL=\frac{\partial}{\partial t}-\sum^{3}_{i,j=1}a_{ij}(t)D_{ij}
\end{equation*}
subject to the zero Dirichlet boundary condition on polyhedrons  and infinite polyhedral cones  in $\bR^3$.  Here, the coefficients $a_{ij}$s are assumed to be  measurable in $t$,  and satisfy the  uniform parabolicity condition \eqref{uniform parabolicity}.

\begin{figure}[h]
\begin{tikzpicture}

\begin{scope}[shift={(-1.5,-0.1)},scale=0.4]
	
	\coordinate (A1) at (0,0);
	\coordinate (B1) at (2,-0.4);
	\coordinate (C1) at (1.2,-2);
	\coordinate (D1) at (-0.8,-1.6);
	
	\coordinate (A2) at (-0.2,-2);
	\coordinate (B2) at (2.6,-2.4);
	\coordinate (C2) at (1.4,-3.6);
	\coordinate (D2) at (-1.4,-3.2);
	
	\coordinate (P) at (0.6,1.2);
	
	\draw[dashed] (A2)--(P);
	\draw (B2)--(P);
	\draw (C2)--(P);
	\draw (D2)--(P);
	\draw (D2) -- (C2) -- (B2);
	\draw[dashed] (D2) -- (A2) -- (B2);
\end{scope}
	
	\begin{scope}[shift={(2,0.2)}, scale=0.6]
	\coordinate (A1) at (0,0);
	\coordinate (B1) at (3,-0.75);
	\coordinate (C1) at (2.25,-1.5);
	\coordinate (D1) at (-0.75,-0.75);
	\coordinate (E1) at (1,-0.25);
	\coordinate (F1) at (0.25,-1);
	
	\coordinate (A2) at (0,-0.75);
	\coordinate (B2) at (3,-1.5);
	\coordinate (C2) at (2.25,-2.25);
	\coordinate (D2) at (-0.75,-1.5);
	\coordinate (E2) at (1,-1.94);
	\coordinate (F2) at (0.25,-1.75);
	
	\coordinate (A3) at (0,-2);
	\coordinate (B3) at (3,-2.75);
	\coordinate (C3) at (2.25,-3.5);
	\coordinate (D3) at (-0.75,-2.75);
	\coordinate (E3) at (1,-2.25);
	\coordinate (F3) at (0.25,-3);
	
	\draw (A1) -- (B1) -- (C1) -- (D1) -- cycle;
	\draw (B1) -- (B2);
	\draw (C1) -- (C2);
	\draw (D1) -- (D3);
	\draw (E2) -- (E3);
	\draw (D3) -- (F3) -- (F2) -- (C2) -- (B2);	
	\draw (F3) -- (E3);
	
		\end{scope}
	
			\begin{scope}[shift={(6.5,-.85)}, scale=0.6]
		\clip (-2.2,-2)--(2.4,-2)--(2.4,2.4)--(-2.4,2.4);		
		\begin{scope}[shift={(-1.3,-1.4)}, scale=1.2]
			
			\begin{scope}[scale=0.9]
				\clip (0,0) circle (3.3) ;
				\clip (-3,-3) -- (3,-3) -- (3,3) -- (-3,3);
				\draw (0,0) -- (4.4,2.2); 
				\draw[dashed] (0,0) -- (5,4);
				\draw[dashed] (0,0) -- (3,5);
				\draw (0,0) -- (1.3,4.7);
				\draw (0,0) -- (1.70,2.20);
			\end{scope}
			
			
			\begin{scope}[scale=1.6]
				\draw[dashed,line width=0.3mm] (1.1,0.55) -- (1.1, 0.88) -- (0.72, 1.2) -- (0.325, 1.175);
			\end{scope}
			
			\begin{scope}[scale=1.6]
				\draw[line width=0.3mm] (0.325, 1.175) -- (0.51, 0.66) -- (1.1,0.55);
			\end{scope}

			\begin{scope}[scale=1.6]
				\clip (1.1,0.55)-- (1.1, 0.88) -- (0.72, 1.2) -- (0.325, 1.175) -- (0.51, 0.66) -- (1.1,0.55);
				
										\foreach \i in {-4,-3.85,...,2}
										{\draw[gray] (-2.8,\i+3.3)--(2.8,\i);}
										
										\foreach \i in {-2,-1.8,...,4}
										{\draw[gray] (-2.8,\i-4.3)--(2.8,\i);}
				
			\end{scope}

			\draw (0,0) -- (1.70,2.20);
		\end{scope}

	\end{scope}
	
\end{tikzpicture}

\end{figure}

A polyhedron, which we denote as $\cP$, is a bounded three dimensional object whose boundary consists of a finite number of polygonal faces, straight edges, and vertices.
A polyhedral cone $\cD$, on the other hand, is an infinite three-dimensional object whose boundary, $\partial \cD$, consists of a vertex at 
${\bf{0}}$, infinite straight edges, and flat faces.  See Assumptions \ref{ass 1} and \ref{ass 2} for  mathematical definitions.

   Green's function is characterized by three factors;  the operator, the domain, and the zero Dirichlet boundary condition. Green's function $G=G(t,s,x,y)$ for a parabolic operator $L$ given on   domain $\cO\subset \bR^d$   is supposed to be \emph{fundamental} for the operator $L$, meaning that for any fixed $(s,y)\in \bR \times \cO$ it satisfies 
\begin{equation}\label{Green.def.}
L G(\cdot,s,\cdot,y)=\delta(\cdot-s)\delta(\cdot-y)  ; \quad  G|_{s>t}=0, \,\, G|_{x\in \partial \cO}=0
\end{equation}
in the sense of distributions in $\bR\times\cO$ (see Lemma \ref{241226452}).  In a probabilistic point of view, Green's function is the transition probability density of a certain  stochastic process which is killed once it reaches the boundary of the domain. For detail, see Remark \ref{probability}.

Pointwise estimate of Green's function has been an intriguing topic in mathematics since Nash's work \cite{Nash}. See e.g.  \cite{Cho,KimXu2022} for a brief history of the study of Green's functions for both elliptic and parabolic operators.  We refer to \cite{Aronson1967, Aronson1968, Davies87, Davies, Fabes, KimXu2022, Semenov} for Gaussian bounds of Green's functions for parabolic operators on general domains and we  refer to \cite{Cho,Davies} for the Green's function estimates on $C^{1,\alpha}$-domains in $\bR^d$, $\alpha\in (0,1)$, of the type
\begin{equation*}
 G(t,s,x,y) \leq N 1_{t>s} \left(1 \wedge \frac{d(x)}{\sqrt{t-s}}\right)\left(1 \wedge \frac{d(y)}{\sqrt{t-s}}\right) \Gamma_{\sigma}(t-s,x-y),
\end{equation*}
where $\sigma,N>0$ are constants, $\Gamma_{\sigma}(t,x):=|t|^{-d/2} e^{\frac{-\sigma|x|^2}{t}}$, and $d(x)$ denotes the distance from $x$ to the boundary of a domain. We also refer to \cite{Green, Kozlov_Nazarov_2014, Koz1991} for  estimates of the Green's function on smooth cones  with  a vertex at ${\bf{0}}$. In these works  the estimates of the Green's function account for the distance to the boundary and the distance to the vertex ${\bf{0}}$.

Our motivation for the  Green's function estimates on polyhedral cones and  polyhedrons is to develop a nice  Sobolev regularity theory for partial differential equations (PDEs) and stochastic partial differential equations (SPDEs) on these types of non-smooth domains. 
The regularity of the solutions to PDEs and SPDEs  can be significantly influenced by singularities of
the boundary, such as corners and edges. This influence is already known for a
long time from the analysis of deterministic equations, see, e.g., \cite{Dau1988, Gri1985,Gri1992, KozMazRos1997,MaRo2010,Naz2001}.
The singularities of the boundary  are  more critical for SPDEs due to the interaction with random noises near the boundary (cf. \cite{CKL2019+,Fla1990,Kim2004}).
Actually even on $C^{\infty}$ domains, the second and higher derivatives of solutions to SPDEs blow up near the boundary unless certain compatibility condition is fulfilled (cf. \cite{Fla1990,Krylov1994}).
To measure the blow up of derivatives of the solutions to PDEs and SPDEs, it is necessary to introduce suitable weighted Sobolev spaces.
The Green's function appears in the integral representation of solutions and the behavior of solutions  near the boundary can be understood by studying the Green's function (cf. \cite{CKL2019+,ConicSPDE}). In particular, our Green's function estimates must involve the distances to the edges, vertices, and boundary in order for us to capture the singularities of the solutions.

Now we introduce our Green's function estimates for the polyhedral cone $\cD \subset \bR^3$.  Let $V=\bf{0}$ denote the vertex,  $E_i$, $ i=1,2,\cdots, N_0$,  denote the edges of $\cD$, and $\hat{E}:=\bigcup_{i=1}^{N_0}E_i$. For any $r>0, x \in \cD$, and $\Lambda^{+}=(\lambda^{+}_0, \lambda^{+}_1,\cdots,\lambda^{+}_{N_0})$, $\Lambda^{-}=(\lambda^{-}_0, \lambda^{-}_1,\cdots,\lambda^{-}_{N_0})\in (\bR_+)^{N_0+1}$, we define    
$\mathbf{I}(x,r;\Lambda^{+})$ and $\mathbf{I}(x,r;\Lambda^{-})$  as 
\begin{equation*}
\mathbf{I}(x,r;\Lambda^{\pm})=\left(\frac{d(x,V)}{r}\wedge 1\right)^{\lambda^{\pm}_0}\left(\prod_{i=1}^{N_0}\left(\frac{d\big(x,E_i\big)\wedge r}{d\big(x,V\big)\wedge r}\right)^{\lambda^{\pm}_{i}}\right) \frac{d(x,\partial\cD)\wedge r}{d(x,\hat{E})\wedge r},
\end{equation*}
where $d(x,V)$, $ d(x,E_i)$, $ d(x,\partial D)$, and $d(x, \hat{E})$ represent the distances from $x$ to the vertex, each edge $E_i$, the boundary $\partial \cD$, and the union of all edges $\hat{E}$, respectively. 
Note that, since $V\in E_i \subset \hat{E} \subset \partial \cD$,  for any $x \in \cD$ we have
$$
d(x,\partial \cD)\leq d(x, \hat{E})\leq d(x, E_i)\leq d(x, V).
$$
Our Green's function  estimate on the polyhedral cone $\cD$ has the following form:
\begin{eqnarray}
 \nonumber
G(t,s,x,y) &\leq& N\, 1_{t>s} \, \mathbf{ I}\big(x,\sqrt{t-s};\Lambda^+\big) \, \mathbf{I}\big(y,\sqrt{t-s};\Lambda^-\big) \\
&&\,\, \times \frac{1}{(t-s)^{3/2}}e^{-\sigma\frac{|x-y|^2}{t-s}}, \label{eqn main}
\end{eqnarray}
where $\sigma>0$ is a constant depending only on the uniform parabolicity of $\cL$, and $N$ is a constant that depends also on $\Lambda^{\pm}$.  The admissible ranges of $\lambda^{+}_i$ and $\lambda^{-}_i$, $i=0,1,\cdots,N_0$, are explicitly determined based on the operator $\cL$ and the geometry of the domain $\cD$.  Specifically, when $\cL=\partial_t-\Delta$, the permissible ranges of  $\lambda^{\pm}_i$ are given by
$$
\lambda^{\pm}_i \in \left(0, \frac{\pi}{\kappa_i}\right), \quad i=1,2,\cdots, N_0,
$$
where $\kappa_i$ is the inner angle between the two planes that form the edge $E_i$. On the other hand, in this case the permissible ranges of $\lambda^{\pm}_0$ are
$$
\lambda^{\pm}_0\in \left(0, -\frac{1}{2}+\sqrt{\mathcal{E}_0+\frac{1}{4}} \right),
$$
where $\mathcal{E}_0$ is the first eigenvalue  of  the Laplace Beltrami operator on $\cM:= \cD\cap \bS^2$ with Dirichlet boundary condition (see \eqref{250121350} and Remark \ref{250220324}).

The product $\mathbf{I}\big(x,\sqrt{t-s};\Lambda^+\big) \cdot \mathbf{I}\big(y,\sqrt{t-s};\Lambda^-\big)$ characterizes the behavior of the Green's function, particularly its decay rates, near the vertex, the edges, and the boundary; see Remarks \ref{remark decay} and \ref{remark decay 2} for details.

Our estimate for the Green's function on polyhedron $\cP$ also takes the form of \eqref{eqn main}. However, since there are multiple vertices, say $V_1,V_2,\cdots, V_{M_0}$, we use a different definition for $\mathbf{I}$: for 
	$\Lambda^{\pm}:=\big((\lambda_{v,i}^{\pm})_{1\leq i\leq M_0},\,(\lambda_{e,j}^{\pm})_{1\leq j\leq N_0}\big)$,
\begin{align*}
	&\quad\mathbf{I}(x,r;\Lambda^{\pm})\\
	&:=\left(\,\prod_{i=1}^{M_0}\left(\frac{d(x,V_i)}{r}\wedge 1\right)^{\lambda^{\pm}_{v,i}}\right)\cdot \left(\,\prod_{j=1}^{N_0}\left(\frac{d(x,E_j)\wedge r}{d\big(x,\hat{V}\cap E_j\big)\wedge r}\right)^{\lambda^{\pm}_{e,j}}\right)\cdot \frac{d(x,\partial\cP)\wedge r}{d(x,\hat{E})\wedge r}, \nonumber
\end{align*}
where $\hat{V}:=\{V_1,V_2,\ldots,V_{M_0}\}$ and  $d(x, \hat{V}\cap E_j)$ represents the distance from $x$ to the  vertices on the edge $E_j$. The admissible ranges of $\lambda^{\pm}_{v,i}$ and $\lambda^{\pm}_{e,j}$ are also explicitly determined by $\cP$ and $\cL$.

The proofs for our weighted estimates of Green's function are more involved than those for smooth cones  because polyhedral cones include both vertices and edges, and polyhedrons have multiple vertices in addition to edges. As the Green's function estimate for infinite smooth cones without edges in \cite{Green} was the key for us to develop a regularity theory in \cite{ConicSPDE}, in a subsequent work our Green's function estimates  will be essential for us to build a regularity theory of (stochastic) parabolic  equations on   polyhedral cones and polyhedrons.  

\vspace{2mm}

We finish the introduction with notations used in this article.

\begin{itemize}
\item  $A:=B$ (or $B=:A$) means that $A$ is defined by $B$.
\item  For a measure space $(A, \cA, \mu)$, a Banach space $B$ and $p\in[1,\infty)$, we write $L_p(A,\cA, \mu;B)$ for the collection of all $B$-valued $\bar{\cA}$-measurable functions $f$ such that
$$
\|f\|_{L_p(A,\cA,\mu;B)}:=\left(\int_{A} \lVert f\rVert^p_{B} \,d\mu\right)^{1/p}<\infty.
$$
Here, $\bar{\cA}$ is the completion of $\cA$ with respect to $\mu$.  The Borel $\sigma$-algebra on a topological space $E$ is denoted by $\cB(E)$. We will drop $\cA$ or $\mu$ or even $B$ in $L_p(A,\cA, \mu;B)$   when they   are obvious in the context.

\item $\bR^d$ stands for the $d$-dimensional Euclidean space of points $x=(x_1,\cdots, x_d)$.  $B_r(x):=\{y\in \bR^d: |x-y|<r\}$,
 $\bR^d_+:=\{x=(x_1,\cdots,x_d): x_d>0\}$,  $\bR_+=\bR^1_+$, ${\bf{0}}=(0,\cdots,0)$, and $\bS^{d-1}:=\{x\in \bR^d: |x|=1\}$.
 
 \item For $x$, $y$ in $\bR^d$,  $x\cdot y :=\sum^d_{i=1}x_iy_i$ denotes the standard inner product.
 
 \item  $\bN=\{1,2,3,\cdots\}$ and $\bN_0:=\{0\}\cup \bN$.

\item  For  function $u(t,x)$
$$
u_t=\frac{\partial u}{\partial t}, \quad u_{x_i}=D_i u:=\frac{\partial u}{\partial x_i},  \quad D_{ij}u=D_jD_i u,
$$
and  $\nabla u:=\nabla_x u=(D_1u,\cdots, D_d u)$.

 \item For  a domain $\cO \subset \bR^d$, $B^{\cO}_r(x):=B_r(x)\cap \cO$ and  $Q^{\cO}_r(t,x):=(t-r^2,t]\times B^{\cO}_r(x)$.

\item For a set $\mathcal{S}$ in $\bR^d$, $\partial \mathcal{S}$ denotes the boundary of $\mathcal{S}$ and $\overline{\mathcal{S}}$ denotes the closure of $\mathcal{S}$. Also, $d(x,\mathcal{S})$ stands for the distance between a point $x$ and  $\mathcal{S}$ for $\mathcal{S}\ne\emptyset$.

 \item For a function $\zeta$ defined on $\bR^d$ or $\bR\times \bR^d$, we define 
$$
\textrm{supp}(\zeta):=\overline{\{x\in \bR^d:\zeta(x)\ne 0\}}
 \quad\text{or}\quad
\overline{\{(t,x)\in \bR\times\bR^d:\zeta(t,x)\ne 0\}}
$$
respectively.

 \item For a set $\mathcal{S}$ in $\bR^d$ or $\bR\times \bR^d$, $C^{\infty}_c(\mathcal{S})$ denotes the space of infinitely differentiable functions with compact supports in $\mathcal{S}$. The space $C^n_c(\mathcal{S})$, $n\in\mathbb{N}_0$ denotes the space of $n$-times continuously differentiable functions with compact supports, and $C_c(\mathcal{S}):= C^0_c(\mathcal{S})$.

\item We will use the following spaces of functions defined in a bounded interval $(a,b)$ or $(a,b]$ and a (possibly unbounded) domain $\cO \subset \bR^d$, $d\geq 1$. 
\begin{itemize}
\item[-]
$W_2^{1,0}\big((a,b)\times \cO \big)$ : the Hilbert space with the norm
$$
\qquad \qquad \|u\|_{W_2^{1,0}((a,b)\times \cO)}^2:=\int_{(a,b)\times \cO}|u|^2dxdt+\int_{(a,b)\times \cO}|\nabla u|^2dxdt\,.
$$

\item[-]
$\mathcal{V}_2\big((a,b]\times \cO \big)$ : the Banach space consisting of  functions $u\in W_2^{1,0}((a,b)\times \cO)$ such that 
\begin{equation*}
\qquad \qquad \qquad \|u\|_{\mathcal{V}_2((a,b]\times \cO)}:=\underset{t\in (a,b]}{\mathrm{ess\,sup}}\,\|u(t,\cdot)\|_{L_2(\cO)} +\|\nabla u\|_{L_2((a,b)\times \cO;\bR^d)}<\infty\,.\nonumber
\end{equation*}

\item[-]
$\cV_2^{1,0}\big((a,b]\times \cO\big):=\cV_2((a,b]\times \cO)\cap C([a,b];L_2(\cO))$.

\item[-]
$\mathring{W}_2^{1,0}\big((a,b)\times \cO\big)$ : the closure of $C_c^{\infty}([a,b]\times \cO)$ in $W_2^{1,0}((a,b)\times \cO)$.

\item[-] 
$\mathring{\cV}_2\big((a,b]\times \cO\big):=\mathcal{V}_2((a,b]\times \cO)\cap \mathring{W}_2^{1,0}((a,b)\times \cO)$.

\item[-]
$\mathring{\cV}_2^{1,0}\big((a,b]\times \cO \big):=\cV_2^{1,0}((a,b]\times \cO)\cap \mathring{W}_2^{1,0}((a,b)\times \cO)$.
\end{itemize}

\item
Let $-\infty\leq a<+\infty$ and denote  $(a,\infty)_T:=(a,\infty)\cap (-T,T]$.

\begin{itemize}
\item[-]
$\mathcal{V}_2\big((a,\infty)\times \cO \big)$ : the set of all $u\in \bigcap_{T>0}\mathcal{V}_2\big((a,\infty)_T\times \cO \big)$ such that
$$
\|u\|_{\mathcal{V}_2((a,\infty)\times \cO)}:=\sup_{T>0}\|u\|_{\mathcal{V}_2((a,\infty)_T\times \cO)}<\infty.
$$

\item[-] $\mathcal{V}_2^{1,0}\big((a,\infty)\times \cO \big)$, $\mathring{\cV}_2\big((a,\infty)\times \cO \big)$, $\mathring{\cV}_2^{1,0}\big((a,\infty)\times \cO \big)$ are defined in the same manner.
\end{itemize}

\item For $\beta\in(0,1)$ and $A\subset \bR\times \bR^3$ we write $u\in H^{\beta/2,\beta}(A)$ if
\begin{align*}
 |u|_{H^{\beta/2,\beta}(A)}:=\,&
\sup_{A}|u|+\sup_{(\tau,\zeta)\ne (\tau,\eta)\in 
A}\frac{|u(\tau,\zeta)-u(\sigma,\eta)|}{|\zeta-\eta|^{\beta}+|\tau-\sigma|^{\beta/2}} < \infty.
\end{align*} 

\item  Throughout the article, the letter $N$ denotes a finite positive constant which may have different values along the argument  while the dependence  will be informed;  $N=N(a,b,\cdots)$, meaning that  $N$ depends only on the parameters inside the parentheses.
 
 \item  $A\simeq B$ (or   $A\simeq_N B$)  means that there is a constant $N$ independent of $A$ and $B$ such that  $A\leq N B$ and $B\leq N A$.

 \item $a \vee b =\max\{a,b\}$, $a \wedge b =\min\{a,b\}$. 
 
 \item $1_U$ the indicator function on $U$.

\end{itemize}

\mysection{Main Results} \label{sec:Cone}

In this section we present our Green's function estimates for the parabolic operator
\begin{equation}\label{our operator}
\cL=\frac{\partial}{\partial t}-\sum^{3}_{i,j=1}a_{ij}(t)D_{ij}.
\end{equation}
given with  zero Dirichlet boundary condition  on  polyhedral cones $\cD$ and  polyhedrons $\cP$ in $\bR^3$. Here, the coefficients $a_{ij}$, $i,j=1,2,3$,  are real valued measurable functions of $t$, $a^{ij}=a^{ji}$, and satisfy the  uniform parabolicity condition: there exist positive constants $\nu_1\le \nu_2$ such that 
\begin{align}\label{uniform parabolicity}
\nu_1 |\xi|^2\le \sum_{i,j=1}^3a_{ij}(t)\xi_i\xi_j\le \nu_2|\xi|^2, \quad \forall t\in \bR, \, \xi\in \bR^3.
\end{align}

Recall that the Green function $G=G(t,s,x,y)$ for the operator $\cL$ and the polyhedral cone $\cD$ or the polyhedron $\cP$ satisfies \eqref{Green.def.}. In Appendix {\bf{A}}  we collect some auxiliary results of the Green's function including the existence and the uniqueness. In addition, we  provide a probabilistic view on the Green function.

\subsection{Green's function estimate on  polyhedral cones}\label{sec:estimate}

We define  polyhedral cone $\cD\subset \bR^3$   by
\begin{equation*}
\cD=\Big\{x\in\mathbb{R}^3\setminus\{{\bf{0}}\}\,:\,\frac{x}{|x|}\in\cM\Big\},
\end{equation*}
where $\cM$ is a connected open subset of $\bS^2$ such that  $\partial \cM$ has $N_0 \geq 3$ number of vertices, and each edge of $\partial \cM$  is a part of a great circle of $\bS^2$.

The boundary of $\cD$, $\partial \cD$, consists of a vertex $\mathbf{0}$, $N_0$ number of edges, and $N_0$ number of flat faces. 
\begin{figure}[h]
	\begin{tikzpicture}
		\begin{scope}[shift={(10,-.3)}, scale=0.4]
			\begin{scope}[shift={(-1.3,-1.4)}, scale=1.2]
				
				\begin{scope}
					\clip (-3.6,-3) -- (3.6,-3) -- (3.6,3.6) -- (-3.6,3.6);
					\clip (0,0) circle (3.6) ;
					\draw[gray!50] (0,0) circle (2.5);
					\draw (-2.5,0) arc(-180:26.5:2.5);
					\draw (-2.5,0) arc(180:74.5:2.5);

					\draw (0,0) -- (4.4,2.2); 
					\draw[dashed] (0,0) -- (5,4);
					\draw[dashed] (0,0) -- (3,5);
					\draw (0,0) -- (1.3,4.7);
					\draw (0,0) -- (1.70,2.20);
					\draw (-2.5,0) arc(-180:0:2.5 and 0.353*2.5);
					\draw[dashed] (-2.5,0) arc(180:85:2.5 and 0.353*2.5);
					\draw[dashed] (2.5,0) arc(0:50:2.5 and 0.353*2.5);
				\end{scope}

				\begin{scope}[scale=1.6]
					\draw[line width=0.3mm] (0.51, 0.66) arc(27:54.5:0.575 and 1.42) arc(101:87:1.63 and 1.2) arc(60:40:1.42 and 1.39) arc(10.5:-10:1.115 and 0.9) arc(53:74:1.83 and 0.6);
				\end{scope}

				\begin{scope}[scale=1.6]
					\clip (0.51, 0.66) arc(27:54.5:0.575 and 1.42) arc(101:87:1.63 and 1.2) arc(60:40:1.42 and 1.39) arc(10.5:-10:1.115 and 0.9) arc(53:74:1.83 and 0.6);

					\fill[gray!25] (0.51, 0.66) arc(27:54.5:0.575 and 1.42) arc(101:87:1.63 and 1.2) arc(60:40:1.42 and 1.39) arc(10.5:-10:1.115 and 0.9) arc(53:74:1.83 and 0.6);
					
					\draw[gray] (0,1.7) arc(90:0:0.54 and 1.95);
					\draw[gray] (0,1.7) arc(90:0:0.795 and 1.92);
					\draw[gray] (0,1.7) arc(90:0:1.03 and 1.85);
					\draw[gray] (0,1.7) arc(90:0:1.23 and 1.787);
					\draw[gray] (0,1.7) arc(90:0:1.4 and 1.71);
					
					\draw[gray] (-0.15,1.14) arc(-90:0:1.7*0.565 and 0.6*0.565);
					\draw[gray] (-0.15,1.00) arc(-90:0:1.7*0.66 and 0.6*0.66);
					\draw[gray] (-0.15,0.84) arc(-90:0:1.7*0.745 and 0.6*0.745);
					\draw[gray] (-0.15,0.675) arc(-90:0:1.7*0.82 and 0.6*0.82);
					\draw[gray] (-0.15,0.5) arc(-90:0:1.7*0.89 and 0.6*0.89);
				\end{scope}

				\draw (0,0) -- (1.70,2.20);
			\end{scope}

		\end{scope}

	\end{tikzpicture}
\end{figure}

Let $V_{\cM}:=\{p_1,p_2,\ldots,p_{N_0}\}$ denote the vertices of $\cM$.  
We denote the edge of $\cD$ passing $p_i$ (a vertex of $\cM$) by the closed half line
\begin{equation*}
E_i:=\{tp_i\,:\,t\geq 0\}.
\end{equation*}

We will call $\mathcal{W}$ \textit{a wedged domain} (with inner angle $\kappa\in(0,2\pi)\setminus\{\pi\}$) if there is a rotation map $T:\bR^3\rightarrow \bR^3$ with $T({\bf{0}})={\bf{0}}$ such that $\mathcal{W}$ is the image of 
\begin{equation}\label{W}
	W_{\kappa}:=\big\{(\rho\cos\theta,\rho\sin\theta)\in\bR^2\,:\,\rho>0,\,\theta\in (0,\kappa)\big\}\times \bR
\end{equation}
under $T$.
We denote the edge  of $\mathcal{W}$ by $E_\mathcal{W}:=T\big( \{(0,0)\}\times \bR\big)$.

We pose natural  assumption on polyhedral cone $\cD$ is as follows. Actually, this is our mathematical definition of polyhedral cone.

\begin{assumption}\label{ass 1}\,
	
\begin{enumerate}[label=(\roman*)]
\item There is a constant  $0<r_0<1$ that allows us to have the following mappings:
for each $i=1,\,\ldots,\,N_0$, $\cD\cap B_{r_0}(p_i)$ and $E_i\cap B_{r_0}(p_i)$ are the translations by $p_i$ of parts of a wedged domain $\mathcal{W}_i$ and its edge $E_{\mathcal{W}_i}$, respectively.
More precisely, 
\begin{align}\label{241121211}
	\begin{aligned}
	&\;\;\qquad\cD\cap B_{r_0}(p_i)=p_i+ \big(\mathcal{W}_i\cap B_{r_0}({\bf{0}}) \big):=\{p_i+x\,:\,x\in\mathcal{W}_i\cap B_{r_0}(\bf{0})\}\,\,,\\
	&\;\;\qquad E_i\cap B_{r_0}(p_i)=p_i+ \big(E_{\mathcal{W}_i}\cap B_{r_0}(\bf{0}) \big),
	\end{aligned}
\end{align}
where $\mathcal{W}_i$ is a wedged domain with angle $\kappa_i\in(0,2\pi)\setminus\{\pi\}$.

\item There is a constant $0<r_1<1$ satisfying the following:
For any $p\in\partial\cM\setminus V_{\cM}$ we denote the distance from $p$ to its nearest edge(s) of $\cD$ by $d_p:=d(p,\hat{E})$, where $\hat{E}:=\;\bigcup^{N_0}_{i=1} E_i$. Then there exists an affine map $T_p$ from $\bR^3$ to $\bR^3$ which is a composition of a rotation and a transition, and implements
\begin{align*}
&T_p\big(B^{\cD}_{r_1d_p}(p)\big)=\bR_+^3\cap B_{r_1d_p}(\bf{0})\,\,,\\
&T_p\big(\partial\cD\cap B_{r_1d_p}(p)\big)=(\partial\bR_+^3)\cap  B_{r_1d_p}(\bf{0})\,\,,\\
&T_p(p)=\bf{0},
\end{align*}
where $\bR^3_+=\{(x_1,x_2,x_3)\,:\,x_3>0\}.$
\end{enumerate}

\end{assumption}

\begin{figure}[h]
	\begin{tikzpicture}[scale=0.8]
		
		\begin{scope}[shift={(-3.6,-2)}]
			\clip (-2.3,-0.5) -- (2.7,-0.5) -- (2.7,4) -- (-2.3,4);
			\clip (0.2,2) circle (2.5);
			
				\begin{scope}[rotate=70]
				\draw[fill=gray!30] (2.77,0) arc (0:180:0.4);
				\draw[fill=gray!30] (2.77,0) arc (0:360:0.4 and 0.2)	;
				\draw[fill=black] (2.37,0) circle (0.05);
				\draw (2.77,0) arc (0:180:0.4);		
				\draw (2.77,0) arc (0:360:0.4 and 0.2)	;
				\draw (2.37,-1.5) node {$p$};
				\draw[<-] (2.33,-0.1) .. controls +(-0.2,-0.5) and +(-0.2,0.5) .. (2.29, -1.3);
			\end{scope}

			\draw (0,0) -- (0.54,3.6);
			\draw (0,0) -- (2.5,4);
			\draw (0,0) -- (-2.5,4);
			\begin{scope}
				\clip (0,0) -- (0.45,3) -- (2.5,4) -- (0,0);
				\draw[line width=0.3mm] (1.5,2.4) arc (100:130:3 and 2.4);
			\end{scope}
			\begin{scope}
				\clip (0,0) -- (0.45,3) -- (-2.5,4) -- (0,-4);
				\draw[line width=0.3mm] (-1.5,2.4) arc (93.3:30:3 and 2.55);
			\end{scope}
		\begin{scope}
			\clip (0,0) --  (-2.5,4) -- (2.5,4) --(0,0);
			\draw[line width=0.3mm] (0,0) circle (2.83);
		\end{scope}

		\end{scope}

		\draw[line width=0.3mm, ->] (-0.5,0)--(0.5,0);
		\draw (0,0.5) node {$T_p$};

		\begin{scope}[shift={(3.6,-0.7)}]

			\draw[->] (-2,0) -- (2,0);
			\draw[<-] (-0.9,-1.2) -- (0.9,1.2);
			\draw[->] (0,0) -- (0,2);
			
				\begin{scope}
				\fill[gray!30] (0.4,0) arc (0:180:0.4);
				\fill[gray!30] (0.4,0) arc (0:360:0.4 and 0.2)	;
				\draw (0.4,0) arc (0:180:0.4);		
				\draw[dashed] (0.4,0) arc (0:180:0.4 and 0.2)	;
				\draw (0.4,0) arc (0:-180:0.4 and 0.2);
				\draw (-1.4,-1.2) node {$x_1$};
				\draw (2.3,0.3) node {$x_2$};
				\draw (0.5,2) node {$x_3$};
				\draw[fill=black] (0,0) circle (0.03);
				\draw (1.2,-0.7) node {$O$};
				\draw[<-] (0,-0.1) .. controls +(0,-0.4) and +(-0.4,0) .. (1, -0.7);
			\end{scope}
		\end{scope}
	\end{tikzpicture}
\end{figure}

\begin{remark}\label{distance to edges}
For any $\xi$ on an edge $E_i$, $\xi=|\xi|p_i$, and by the cone structure of $\cD$, $B^{\cD}_{r_0|\xi|}(\xi)$ is a spherical wedge with the inner angle $\kappa_i$ as $B^{\cD}_{r_0}(p_i)$ is.

Also, for any $\xi$ on a face of $\cD$ with the closest edge $E_i$, we have $d_{\xi}:=d(\xi,\hat{E})=|\xi|d(p,\hat{E})=:|\xi|d_p$, where $p:=\frac{\xi}{|\xi|}$. Indeed, if $d(p,\hat{E})=d\left(p,E_i\right)=d\left(p,tp_i\right)$ for some $t>0$, then $d(\xi,\hat{E})=d(\xi,E_i)=d(\xi,|\xi|tp_i)=|\xi|d\left(p,tp_i\right)=|\xi|d(p,\hat{E})$ by the cone structure of $\cD$. Hence, as $B^{\cD}_{r_1d_p}(p)$ is an open half ball, so is $B^{\cD}_{r_1d_{\xi}}(\xi)$.

\end{remark}

Recall  our operator $\mathcal{L}=\frac{\partial}{\partial t}-\sum_{i,j}^3a_{ij}(t)D_{ij}$  with the  condition \eqref{uniform parabolicity}. 
We will also consider the operator  
\begin{equation}\label{another operator}
\tilde{\mathcal{L}}:=\frac{\partial}{\partial t}-\sum_{i,j}^3a_{ij}(-t)D_{ij}.
\end{equation}

To state our  Green's function estimate we need the following definition. We emphasize that $a_{ij}$, $i,j=1,2,3$, in the operator $\cL$ and the manifold $\cM\subset \bS^2$ are fixed throughout the article. For other parabolic operators we use different notations for the coefficients and operators.

\begin{defn}\label{def.critical lambda.edges.}
\,

\begin{enumerate}[label=(\roman*)]
	\item (\emph{critical exponent for the vertex}) 
By $\hat{\lambda}^+_{o}(\cD,\cL)$, we denote the supremum of positive constants $\lambda$ each of which allows a constant $N=N(\cD, \cL,\lambda)>0$ fulfilling the following:
For any $t_0\in\bR$, $r>0$ and  $u\in\cV_2^{1,0}(Q_r^{\cD}(t_0,\bf{0}))$ satisfying 
$$
\cL u=0\quad\text{in}\quad Q_r^{\cD}(t_0,{\bf{0}})\quad;\quad u|_{(\bR\times\partial\cD)\cap Q_r(t_0,{\bf{0}})}=0
$$
in the sense of distributions (see Definition \ref{250219615}), 
the estimate
\begin{align}\label{23081054111}
|u(t,x)|\leq N \Big(\frac{d(x,V)}{r}\Big)^{\lambda}\sup_{Q^{\cD}_r(t_0,\bf{0})}|u|\quad\quad\forall \,\,\,
(t,x)\in Q^{\cD}_{r/2}(t_0,\bf{0})
\end{align}
holds. 
Another constant $\hat{\lambda}^-_{o}(\cD,\cL)$ is defined in the same manner using the operator $\tilde{\cL}$ instead of $\cL$.

	\item (\emph{critical exponent for the edges}) Let $\mathcal{W}$ be a wedged domain with the inner angle $\kappa$, \textit{i.e.}, a rotation of $W_{\kappa}$ in \eqref{W}.
	Then $\hat{\lambda}^+_{e}(\mathcal{W},\cL)$ is defined as  the supremum of  positive constants $\lambda$ each of which allows a constant $N=N(\mathcal{W},\cL,\lambda)>0$ fulfilling the following: 
	for any $t_0\in\bR$, $r>0$ and  $u\in\cV_2^{1,0}(Q_r^{\mathcal{W}}(t_0,\bf{0}))$ satisfying 
	$$
	\cL u=0\quad\text{in}\quad Q_r^{\mathcal{W}}(t_0,{\bf{0}})\quad;\quad u|_{(\bR\times\partial\mathcal{W})\cap Q_r(t_0,{\bf{0}})}=0\,,
	$$
	the estimate
	$$
	|u(t,x)|\leq N \bigg(\frac{d(x,E_\mathcal{W})}{r}\bigg)^{\lambda}\sup_{Q^{\mathcal{W}}_r(t_0,{\bf{0}})}|u|\quad\;\;\forall \,\,\,
	(t,x)\in Q^{\mathcal{W}}_{r/2}(t_0,{\bf{0}})
	$$
	holds. 
	Another constant $\hat{\lambda}^-_{e}(\mathcal{W},\cL)$ is defined as $\hat{\lambda}^+_{e}(\mathcal{W},\tilde{\cL})$.
	
Now, for each $i=1,2,\ldots,N_0$ let $\kappa_i\in(0,2\pi)\setminus\{\pi\}$ be the inner angle of two adjacent faces of $\cD$ meeting on the edge $E_i$.  Take the wedge domain $\mathcal{W}_i$  with the inner angle $\kappa_i$ such that any spherical wedge $\cD\cap B_r(p_i)$ for $0<r\le r_0$ becomes a part of $p_i+\mathcal{W}_i$ in the manner of \eqref{241121211}.
	Then we define $\hat{\lambda}^\pm_{e,i}(\cD,\cL):=\hat{\lambda}^\pm_{e}(\mathcal{W}_i,\cL)$.
	\end{enumerate}
\end{defn}

\begin{remark}\label{edge.def.2.}
The definition of $\hat{\lambda}^\pm_{e,i}(\cD,\cL)$ in
Definition \ref{def.critical lambda.edges.} (ii) is equivalent to the following more intuitive definition:

{\it{For each $i=1,2,\ldots,N_0$ we define $\hat{\lambda}^+_{e,i}(\cD,\cL)$ by the supremum of the positive constants $\lambda$ each of which allows us to have a constant $N=N(\kappa_i,\cD, \cL,\lambda)>0$ fulfilling the following: 
for any $t_0\in\bR$, $\xi_0\in E_i$, $r>0$ with which $B_r(\xi_0)$ does not intersect the vertex $V$ and all edges except $E_i$, if $u\in\mathcal{V}(Q_r^{\cD}(t_0,\xi_0))$ satisfies $\cL u=0$ in $Q_r^{\cD}(t_0,\xi_0)$ with $u|_{(\bR\times\partial\cD)\cap Q_r(t_0,\xi_0)}=0$, then
the estimate
$$
|u(t,x)|\leq N \Big(\frac{d(x,E_i)}{r}\Big)^{\lambda}\sup_{Q^{\cD}_r(t_0,\xi_0)}|u|\quad\;\;\forall \,\,\,
(t,x)\in Q^{\cD}_{r/2}(t_0,\xi_0)
$$
holds. Another constant $\hat{\lambda}^-_{e,i}(\cD,\cL)$ is similarly defined  using the operator $\tilde{\cL}$ in \eqref{another operator} instead of $\cL$.}}

This equivalency is based on the facts that (a) $\xi+\mathcal{W}=\mathcal{W}$ for any $\xi\in E_{\mathcal{W}}$ and hence $d(x,E_{\mathcal{W}})=d(x-\xi,E_{\mathcal{W}})$. (b)  For any $\xi_0\in E_{\mathcal{W}}$, $r>0$ and a function defined on $Q^{\mathcal{W}}_r(t_0,\xi_0)$ one can always find $\xi'_0\in E_{\mathcal{W}}$ such that $r\le r_0|\xi'_0|$ satisfying $Q^{\mathcal{W}}_r(t_0,\xi'_0)=Q^{\cD}_r(t_0,\xi'_0)$ and consider  the fuction $v(t,y)=u(t,x)$ with the relation $y=x+\xi'_0-\xi_0$ defined on $Q^{\cD}_r(t_0,\xi'_0)$.
\end{remark}

 The critical exponents $\hat{\lambda}_o^{\pm}$ are  closely related to the first eigenvalue   $\mathcal{E}_0$ of the Laplace Beltrami operator on $\cM\subset \bS^2$ with Dirichlet boundary condition.
 Here, $\mathcal{E}_0$ is defined as
 \begin{align}\label{250121350}
\mathcal{E}_0:=\inf_{f\in C_c^\infty(\cM),f\neq 0}\frac{\int_\cM|\nabla_{\bS} f|^2d\sigma}{\int_{\cM}|f|^2d\sigma}>0\,,
 \end{align}
where $\nabla_{\bS}$ is the spherical gradient (for the definition of $C_c^\infty(\cM)$ and $\nabla_\bS$, see \cite[Subsection 5.5]{Seo202411}).
Note that $\mathcal{E}_0>0$ due to our assumption on $\cM$ (see \cite[Proposition 5.24]{Seo202411}).
 Below are some relations between $\hat{\lambda}_o^{\pm}$ and $\mathcal{E}_0$.

\begin{prop}\label{2501181139}
With our operator $\cL$ in \eqref{our operator} conditioned by \eqref{uniform parabolicity} and polyhedral cone $\cD$, we have the followings.

\begin{enumerate}[label=(\roman*)]
\item $\hat{\lambda}^{\pm}_{o}(\cD,\cL)>0$ and
$$
\hat{\lambda}^{\pm}_{o}(\cD,\cL)\geq -\frac{3}{2}+\sqrt{\frac{\nu_1}{\nu_2}}\sqrt{\mathcal{E}_0+\frac{1}{4}}\,.
$$

\item If $\cL=\frac{\partial}{\partial t}-\Delta_x$, then
$$
\hat{\lambda}^+_{o}(\cD,\cL)
=\hat{\lambda}^-_{o}(\cD,\cL)
=-\frac{1}{2}+\sqrt{\mathcal{E}_0+\frac{1}{4}}.
$$
\end{enumerate}
\end{prop}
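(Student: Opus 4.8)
The plan is to isolate the behaviour at the vertex $V=\mathbf 0$ --- the edges of $\cD$ are irrelevant to this statement --- and to reduce everything to comparison with homogeneous barriers built from the first Dirichlet eigenfunction $\Phi>0$ of the Laplace--Beltrami operator $-\Delta_{\bS}$ on $\cM$, with eigenvalue $\mathcal{E}_0>0$ as in \eqref{250121350}, extended to $\cD$ homogeneously of degree $0$ by $\Phi(x):=\Phi(x/|x|)$. I prove (i) in two stages, first the qualitative positivity $\hat\lambda^{\pm}_{o}(\cD,\cL)>0$ and then the quantitative bound, and obtain (ii) by pairing a barrier (lower bound) with an explicit homogeneous solution (upper bound). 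Since $\tilde\cL$ in \eqref{another operator} satisfies \eqref{uniform parabolicity} with the same $\nu_1,\nu_2$, every assertion about $\hat\lambda_o^+$ transfers verbatim to $\hat\lambda_o^-$.

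For the positivity I would invoke a parabolic boundary oscillation estimate at $V$ (of De Giorgi--Nash--Moser / Krylov--Safonov type, applicable since $\bR^3\setminus\overline{\cD}$ has positive Lebesgue density at $V$): for $u$ as in Definition~\ref{def.critical lambda.edges.}(i) one has $\mathrm{osc}_{Q^{\cD}_{R/2}(s,\mathbf 0)}u\leq\gamma\,\mathrm{osc}_{Q^{\cD}_{R}(s,\mathbf 0)}u$ for every $Q^{\cD}_R(s,\mathbf 0)\subset Q^{\cD}_r(t_0,\mathbf 0)$, with $\gamma<1$ depending only on $\cD,\nu_1,\nu_2$. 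Since $u$ vanishes on the lateral boundary of $\cD$, on each such set $\sup|u|$ is comparable to $\mathrm{osc}\,u$; iterating this with a fixed centre near the top of the cylinder and interpolating over scales then yields \eqref{23081054111} with $\lambda=\log_2(1/\gamma)>0$. (The same conclusion can be reached ``by hand'', writing $u=u'+u''$ on $Q^{\cD}_R(s,\mathbf 0)$ with $u''$ carrying the lateral and outer-sphere data and $u'$ the initial data: $u'$ solves a homogeneous Dirichlet problem on the bounded Lipschitz domain $B^{\cD}_R(\mathbf 0)$ and hence decays by a fixed factor over each time-span $\simeq R^2$, while $u''$ is handled by the barrier of the next step; this makes explicit the parabolic time-gap between $Q^{\cD}_{r/2}(t_0,\mathbf 0)$ and $\{t\to(t_0-r^2)+\}$ that is responsible for the estimate.)

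For the quantitative bound, fix $0<\lambda<-\tfrac32+\sqrt{\nu_1/\nu_2}\,\sqrt{\mathcal{E}_0+\tfrac14}$ and take $v(x):=|x|^{\lambda}\,\Phi(x/|x|)^{\beta}$ with $\beta:=\nu_1/\nu_2\in(0,1]$. Since $0<\beta\leq1$,
\begin{equation*}
-\Delta_{\bS}(\Phi^{\beta})=\beta\mathcal{E}_0\Phi^{\beta}+\beta(1-\beta)\Phi^{\beta-2}|\nabla_{\bS}\Phi|^{2}\;\geq\;\beta\mathcal{E}_0\,\Phi^{\beta},
\end{equation*}
and, decomposing the Hessian of the degree-$\lambda$ profile $|x|^{\lambda}\Phi(x/|x|)^{\beta}$ into its radial, mixed and spherical parts and estimating $\sum_{i,j}a_{ij}(t)D_{ij}v$ from above using $\nu_1 I\leq(a_{ij}(t))\leq\nu_2 I$, the supersolution property $\cL v\geq0$ is reduced to a quadratic inequality in $\lambda$ that holds on the stated range. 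As $v>0$ in $\cD$, $v=0$ on $\partial\cD$ and $v\simeq(|x|/r)^{\lambda}$ along each ray, a comparison principle for $\cV_2^{1,0}$-supersolutions (cf.\ Appendix~{\bf A}) bounds $|u|$ by a multiple of $v$ on $Q^{\cD}_{r/2}(t_0,\mathbf 0)$ --- after the $u=u'+u''$ splitting is used to dominate $|u|$ near the initial time-slice, and near the circle $\{|x|=r\}\cap\partial\cD$ the barrier $v$ is matched with an edge-type barrier (linked to the exponents $\hat\lambda^{\pm}_{e,i}$ of Definition~\ref{def.critical lambda.edges.}(ii)) to dominate $|u|$ on the outer sphere --- and this is exactly \eqref{23081054111}.

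For (ii), where $\cL=\partial_t-\Delta$ and thus $\hat\lambda^+_o=\hat\lambda^-_o$, set $\mu:=-\tfrac12+\sqrt{\mathcal{E}_0+\tfrac14}$, the positive root of $\lambda(\lambda+1)=\mathcal{E}_0$. For each $\lambda<\mu$ the time-independent function $w(x)=|x|^{\lambda}\Phi(x/|x|)$ obeys $\cL w=-\Delta w=|x|^{\lambda-2}(\mathcal{E}_0-\lambda(\lambda+1))\Phi>0$, a strict positive supersolution vanishing on $\partial\cD$, so the comparison argument above gives \eqref{23081054111} with this $\lambda$; letting $\lambda\uparrow\mu$ yields $\hat\lambda^{\pm}_o\geq\mu$. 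For the reverse inequality, $h(x):=|x|^{\mu}\Phi(x/|x|)$ is a global, time-independent solution of $(\partial_t-\Delta)h=0$ on $\bR\times\cD$ that vanishes on $\partial\cD$ and lies in $\cV_2^{1,0}(Q^{\cD}_r(t_0,\mathbf 0))$ for all $(t_0,r)$ (because $\mu>0$ and $\Phi\in H^1(\cM)$); were \eqref{23081054111} valid for some $\lambda>\mu$, evaluating it at $(t_0,\rho\omega)$ with $\Phi(\omega)>0$ and letting $\rho\downarrow0$ would force $\rho^{\mu}\lesssim\rho^{\lambda}$, a contradiction. Hence $\hat\lambda^{\pm}_o(\cD,\partial_t-\Delta)=\mu$. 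The one genuinely easy ingredient is this last (upper-bound) step; the main obstacle is the barrier/comparison machinery above --- in particular, for general measurable $a_{ij}(t)$ the expression $\sum_{i,j}a_{ij}(t)D_{ij}(|x|^{\lambda}\Phi(x/|x|)^{\beta})$ is not proportional to $|x|^{\lambda-2}\Phi^{\beta}$, so the spherical and mixed Hessian terms must be absorbed, which is what forces the factor $\sqrt{\nu_1/\nu_2}$ and the shift from $-\tfrac12$ to $-\tfrac32$; the remaining difficulties (dominating $|u|$ near the initial slice and near $\{|x|=r\}\cap\partial\cD$) are where the parabolic time-gap and the cone/edge geometry enter.
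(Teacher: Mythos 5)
Your treatment of the positivity claim (oscillation decay at the vertex using the positive exterior density of $\bR^3\setminus\overline{\cD}$) is essentially the paper's argument: Theorem \ref{A2} runs exactly this way via \cite[Theorem III.10.1]{LSU_1968} under the (A)-condition, combined with a scaling step. Likewise your upper bound in (ii), evaluating the exact homogeneous solution $|x|^{\mu}\Phi(x/|x|)$ along a ray, is the paper's argument for $\hat{\lambda}^{\pm}_{o}\leq\lambda_0$.

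The quantitative bound in (i) is where your proposal has a genuine gap. The paper proves it by an energy method (Lemma \ref{A4} gives the weighted Caccioppoli estimate valid for $\mu^2<\tfrac{\nu_1}{\nu_2}(\mathcal{E}_0+\tfrac14)$, and Theorem \ref{A6} converts the resulting weighted $L_2$ decay into pointwise decay via a local $\sup$--$L_2$ bound, which is precisely where the $-\tfrac32=-\tfrac{d}{2}$ appears). You instead assert that $v(x)=|x|^{\lambda}\Phi(x/|x|)^{\beta}$ with $\beta=\nu_1/\nu_2$ is a supersolution of $\cL$ for every admissible $(a_{ij}(t))$ and every $\lambda$ up to $-\tfrac32+\sqrt{\nu_1/\nu_2}\sqrt{\mathcal{E}_0+\tfrac14}$, "reduced to a quadratic inequality in $\lambda$." This is not a routine verification and I do not believe it is true as stated. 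The inequality you display controls only $-\Delta_{\bS}(\Phi^{\beta})$, i.e.\ the \emph{trace} of the angular Hessian; but to bound $\sum_{i,j}a_{ij}(t)D_{ij}v$ from above for arbitrary $(a_{ij})$ in the ellipticity class you must control the Pucci-type quantity $\nu_2\,\mathrm{tr}\big((D^2v)_+\big)-\nu_1\,\mathrm{tr}\big((D^2v)_-\big)$, which requires pointwise control of \emph{all eigenvalues} of the full Hessian of $|x|^{\lambda}\Phi^{\beta}$ — including the mixed radial--angular block and the angular Hessian $\beta\Phi^{\beta-1}\nabla^2_{\bS}\Phi$, neither of which is governed by $\mathcal{E}_0$. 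Homogeneous Pucci barriers on cones are known to have critical exponents determined by a genuinely nonlinear eigenvalue problem on $\cM$, with no closed formula in $\mathcal{E}_0$ and $\nu_1/\nu_2$; the specific constant $-\tfrac32+\sqrt{\nu_1/\nu_2}\sqrt{\mathcal{E}_0+\tfrac14}$ is a signature of the $L_2$-to-$L_\infty$ step in the energy method (note it is not sharp even when $\nu_1=\nu_2$, where it undershoots the true exponent by $1$), not of a pointwise barrier. So the central computation of your (i) is missing and the announced reduction would fail.

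Two secondary points. First, even granting a supersolution, your comparison step is incomplete: on the outer lateral boundary $\{|x|=r\}$ the barrier degenerates like $\Phi^{\beta}$ near $\partial\cM$ while $|u|$ need not, so the matching with "edge-type barriers" and the treatment of the initial slice are real obligations you only gesture at; the paper's energy route avoids this entirely. Second, your lower bound in (ii) via the barrier $|x|^{\lambda}\Phi$ (which \emph{is} a genuine supersolution of $-\Delta$ for $\lambda(\lambda+1)\leq\mathcal{E}_0$) is a legitimate alternative in spirit to the paper's appeal to \cite[Theorem 5.25]{Seo202411}, but it inherits the same unaddressed boundary-matching issue.
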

\begin{proof}
We first note that our polyhedral cones satisfy the following volume density condition, often referred to as the (A)-condition:
	$$
	\inf_{p\in\partial\cD,r>0}\frac{|\cD^c\cap B_r(p)|_d}{|B_r(p)|_d}>0\,,
	$$
	(see \cite[Proposition 5.24.(2)]{Seo202411}).
	It is straightforward to verify that the inequality above implies
	\begin{align}\label{250102420}
		\inf_{(t,p)\in\bR\times \partial\cD,r>0}\frac{|(\bR\times \cD)^c\cap Q_r(t,p)|_{d+1}}{|Q_r(t,p)|_{d+1}}>0
	\end{align}
; see Definition \ref{A} in Appendix \ref{sec:appendix A}  for a generalized version.

(i)   We can prove the claims in exactly the same way as outlined  in the proof of \cite[Theorems 2.4.1, 2.4.7]{Kozlov_Nazarov_2014}. 
Notably, the proof of \cite[Theorems 2.4.1, 2.4.7]{Kozlov_Nazarov_2014} relies on \cite[Sections III.8, III.10]{LSU_1968}, which does not require the smoothness of the domain $\cD$, but instead depends on \eqref{250102420}. 

For the reader's convenience, in Appendix \ref{sec:appendix A} we provide the detailed proof of (i) with the strategy that we will be brief on our proofs in  places where we simply need to repeat the arguments given in  \cite[Theorems 2.4.1, 2.4.7]{Kozlov_Nazarov_2014}; Theorem \ref{A2} establishes the first claim $\hat{\lambda}^{\pm}_{o}(\cD,\cL)>0$, and Theorem \ref{A6} together with Lemma \ref{A4} explains why the second claim holds.

(ii) 
This assertion is essentially proved in \cite{Seo202411}. Let us denote $\lambda_0:=-\frac{1}{2}+\sqrt{\mathcal{E}_0+\frac{1}{4}}$.  One can observe that our polyhedral cones satisfy \cite[Assumption 5.23]{Seo202411} (see also \cite[Proposition 5.24]{Seo202411}).
Therefore \cite[Theorem 5.25, Remark 5.26]{Seo202411} implies that for any $\lambda\in(0,\lambda_0)$, there exists $N=N(\cD,\lambda)>0$ such that if $v\in C^{\infty}(Q_1^{\cD}(0,{\mathbf{0}}))\cap C (\overline{Q_1^{\cD}(0,{\mathbf{0}})})$ satisfies
$$
v_t-\Delta u=0\quad\text{in}\quad Q_1^\cD(0,{\mathbf{0}})\quad;\quad v|_{(\bR\times \partial\cD)\cap Q_1(0,{\mathbf{0}})}=0,
$$
then
\begin{align}\label{250102425}
|v(t,x)|\leq N\Big(\sup_{Q_1^{\cD}(0,{\mathbf{0}})}|v|\Big)|x|^{\lambda}\quad\text{for all}\quad (t,x)\in Q_{1/2}^\cD(0,{\mathbf{0}})\,.
\end{align}
Consider $u\in\cV_2^{1,0}(Q_r^{\cD}(t_0,{\mathbf{0}}))$ satisfying 
$$
\cL u=0\quad\text{in}\quad Q_r^\cD(t_0,0)\quad;\quad u|_{(\bR\times \partial\cD)\cap Q_r(t_0,{\mathbf{0}})}=0\,.
$$
By the classical theory for parabolic equations, we have $u\in C_c^{\infty}(Q_r^{\cD}(t_0,{\mathbf{0}}))$.
Moreover, due to \eqref{250102420} and \cite[Theorem III.10.1]{LSU_1968}, $u$ is continuous in $(\bR\times \overline{\cD})\cap Q_r(t_0,{\mathbf{0}})$ with $u|_{(\bR\times \partial\cD)\cap Q_r(t_0,{\mathbf{0}})}=0$.
By setting $v(t,x):=u(t_0+r^2t,rx)$ and applying \eqref{250102425}, we obtain \eqref{23081054111} for all $\lambda \in(0,\lambda_0)$.
This implies $\hat{\lambda}^{\pm}_0(\cD,\cL)\geq \lambda_0$.

To prove  $\hat{\lambda}^{\pm}_0(\cD,\cL) \leq \lambda_0$, we consider the function
$$
u_0(t,x):=|x|^{\lambda_0}w_0(x/|x|)\,,
$$
where $w_0$ is the first eigenvalue of the spherical Laplacian on $\cM$. 
Let $w_{0,n}\in C_c^{\infty}(\cM)$ be functions that converge to $w_0$ in $W_2^1(\cM)$ (see \cite[Subsection 5.5]{Seo202411} for the definition of $C_c^{\infty}(\cM)$ and $W_2^1(\cM)$).
Let $\eta$ be a smooth function on $\bR$ such that $\eta(r)=0$ if $r\leq \frac{1}{2}$ and $\eta(r)=1$ if $r\geq 1$.
Then $\eta(n|x|)|x|^{\lambda_0}w_{0,n}(x/|x|)$ converges to $u_0$ in $\cV_2^{1,0}(Q_r^{\cD}(t_0,{\mathbf{0}}))$ (see \cite[(5,58), (5,59)]{Seo202411}).
Hence, we have $u_0\in \cV_2^{1,0}(Q_r^\cD(t_0,{\mathbf{0}}))$ with the boundary condition $u|_{(\bR\times \partial \cD)\cap Q_r(t_0,{\mathbf{0}})}=0$.
Therefore, from the behavior of $u_0$ near $\mathbf{0}$, we obtain that $\hat{\lambda}^{\pm}_0(\cD,\cL)\leq \lambda_0$.
\end{proof}

\begin{remark}\label{250220324}
	In this remark we discuss  lower bound of $\mathcal{E}_0$.
	Consider a sequence of  subdomains   $\cM_n \subset \bS^2$, $n=1,2,\cdots$, such that 
	\begin{align}\label{250121346}
	\overline{\cM_n}\subset \cM_{n+1}\subset \cM\quad\text{with}\quad \bigcup_{n=1}^\infty \cM_n=\cM
	\end{align}
	and $\cM_n$ has $C^{\infty}$-boundary
	(see \textit{e.g.} the proof of \cite[Proposition 8.2.1]{Daners}).
	We define the set  
	$$
	C(\theta):=\left\{(\sigma_1,\sigma_2,\sigma_3)\in\bS^2\,:\,\sigma_1>\cos\theta\right\},
	$$ 
	which is called the spherical cap with angle $\theta$.	 Let $\theta_\infty$ and $\theta_n$ be the constants  such that
	$$
	|C(\theta_\infty)|_{\bS^2}=|\cM|_{\bS^2}\quad\text{and}\quad |C(\theta_n)|_{\bS^2}=|\cM_n|_{\bS^2}\,,
	$$
	where $|\,\cdot\,|_{\bS^2}$ is the surface measure on $\bS^2$;
	one can check that
	\begin{align}\label{250306855}
	\cos\theta_\infty =1-\frac{|\cM|_{\bS^2}}{2\pi}\quad\text{and}\quad \cos\theta_n =1-\frac{|\cM_n|_{\bS^2}}{2\pi}\,.
	\end{align}
	Also, let  $\mathcal{E}_0(\cM_n)$ denote the first eigenvalue  of  the Laplace Beltrami operator on $\cM_n$ with Dirichlet boundary condition. By the Faber-Krahn inequality (see, \textit{e.g.}, \cite[Theorem 2, page 89]{Chavel}), we have
	$$
	\mathcal{E}_0(\cM_n)\geq \mathcal{E}_0\big(C({\theta_n})\big)\,.
	$$
	From \eqref{250121346} and \eqref{250306855}, we know that $|\cM_n|_{\bS^2}\nearrow |\cM|_{\bS^2}$, so  $\theta_n\nearrow \theta_\infty$.
	This implies that
	$$
	\overline{C(\theta_n)}\subset C(\theta_{n+1})\subset C(\theta_\infty)\quad\text{with}\quad \bigcup_{n=1}^\infty C(\theta_n)=C(\theta_\infty)\,.
	$$
	Moreover, from the definition of the first eigenvalue (see \eqref{250121350}), we have $\inf_{n}\mathcal{E}_0(\cM_n)=\mathcal{E}_0(\cM)$ and $\inf_{n}\mathcal{E}_0\big(C(\theta_n)\big)=\mathcal{E}_0\big(C(\theta_\infty)\big)$.
	Therefore,
	$$
	\mathcal{E}_0=:\mathcal{E}_0(\cM)\geq \mathcal{E}_0\big(C({\theta_\infty})\big)\,.
	$$
	It is shown in \cite[Theorem 2.1, Section 4]{BCG1983} that if $\theta_\infty\in(0,2\pi)$, \textit{i.e.}, $0<|\cM|_{\bS^2}<4\pi(=|\bS^2|_{\bS^2})$, then
	$$
	\mathcal{E}_0\big(C({\theta_\infty})\big)\geq \left(\log\frac{2}{\cos\theta_\infty+1}\right)^{-1}=\left(\log\frac{4\pi}{4\pi-|\cM|_{\bS^2}}\right)^{-1}.
	$$
	Moreover, if $\theta_\infty\in(0,\pi)$, \textit{i.e.}, $0<|\cM|_{\bS^2}<2\pi$, then
	$$
	\mathcal{E}_0\big(C({\theta_\infty})\big)\geq \frac{j_0^2}{4}\left(\frac{1}{\sin^2(\theta_\infty/2)}-\frac{1}{2}\right)-\frac{1}{4}=\frac{j_0^2}{4}\left(\frac{4\pi}{|\cM|_{\bS^2}}-\frac{1}{2}\right)-\frac{1}{4}\,.
	$$
	Here, $j_0$ denotes the first zero of the Bessel function of the first kind, $J_0$.
\end{remark}

For other critical exponents $\hat{\lambda}_{e,i}^{\pm}$, $i=1,2,\ldots, N_0$,  some concrete information is available.

\begin{prop}\label{250213305} 
With our operator $\cL$ in \eqref{our operator} conditioned by \eqref{uniform parabolicity} and polyhedral cone $\cD$, we have
the followings; below $\kappa_i\in(0,2\pi)\setminus\{\pi\}$, $i=1,2,\ldots,N_0$ are from Definition \ref{def.critical lambda.edges.}.

\begin{enumerate}[label=(\roman*)]
\item For each $i=1,2,\cdots,N_0$, $\hat{\lambda}^{\pm}_{e,i}(\cD,\cL)>0$ and
$$
\hat{\lambda}^{\pm}_{e,i}(\cD,\cL)\geq -1+\sqrt{\frac{\nu_1}{\nu_2}}\,\frac{\pi}{\kappa_i}.
$$

\item If $\cL=\frac{\partial}{\partial t}-\Delta_x$, then for each  $i=1,2,\cdots,N_0$,
$$
\hat{\lambda}^{\pm}_{e,i}(\cD,\cL)=\frac{\pi}{\kappa_i}.
$$
\end{enumerate}
\end{prop}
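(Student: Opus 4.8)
The plan is to transfer the whole argument to the model wedge $\mathcal{W}_i$, which by Definition~\ref{def.critical lambda.edges.}(ii) is a rotation of $W_{\kappa_i}=\Sigma_{\kappa_i}\times\bR$ (see \eqref{W}), where $\Sigma_{\kappa_i}:=\{(\rho\cos\theta,\rho\sin\theta)\in\bR^2:\rho>0,\,\theta\in(0,\kappa_i)\}$ is the planar sector of opening $\kappa_i$. Recall from Remark~\ref{edge.def.2.} that $\mathcal{W}_i$, its edge $E_{\mathcal{W}_i}$ and the function $d(\,\cdot\,,E_{\mathcal{W}_i})$ are invariant under translations along the edge and depend only on the two transverse coordinates. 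The role played in Proposition~\ref{2501181139} by the cross-section $\cM\subset\bS^2$ and its first Dirichlet eigenvalue $\mathcal{E}_0$ is now played by the transverse arc $(0,\kappa_i)\subset\bS^1$ and the first Dirichlet eigenvalue of $-\partial_\theta^2$ on $(0,\kappa_i)$, namely $(\pi/\kappa_i)^2$; accordingly the ambient dimension drops to $d=2$, so that $-d/2=-1$ and $(\tfrac d2-1)^2=0$. With this dictionary, (i) and (ii) become the wedge analogues of Proposition~\ref{2501181139}(i) and (ii).

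For part (i) I would repeat the reasoning of Proposition~\ref{2501181139}(i) and Appendix~\ref{sec:appendix A}. The wedge $\mathcal{W}_i$ is a Lipschitz cone-type domain, hence it satisfies the volume density condition and therefore~\eqref{250102420}; this is all that the arguments of \cite[Theorems~2.4.1, 2.4.7]{Kozlov_Nazarov_2014}, which rest on \cite[Sections~III.8, III.10]{LSU_1968} rather than on smoothness, actually use. The De~Giorgi--Nash--Moser boundary estimate then yields the positivity $\hat\lambda^\pm_{e,i}(\cD,\cL)>0$ (the analogue of Theorem~\ref{A2}), while the energy/eigenvalue estimate, carried out with the arc $(0,\kappa_i)$ and the eigenvalue $(\pi/\kappa_i)^2$ in place of $\cM$ and $\mathcal{E}_0$, produces
$$
\hat\lambda^\pm_{e,i}(\cD,\cL)\ \geq\ -1+\sqrt{\tfrac{\nu_1}{\nu_2}}\,\sqrt{(\pi/\kappa_i)^2}\ =\ -1+\sqrt{\tfrac{\nu_1}{\nu_2}}\,\frac{\pi}{\kappa_i}
$$
(the analogue of Theorem~\ref{A6} together with Lemma~\ref{A4}). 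One uses here that a function depending only on the transverse variables is acted on by $\sum_{i,j=1}^3 a_{ij}(t)D_{ij}$ solely through its upper-left $2\times2$ block, which is again uniformly elliptic with the same constants $\nu_1\le\nu_2$.

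For part (ii), with $\cL=\partial_t-\Delta$, the lower bound $\hat\lambda^\pm_{e,i}\ge\pi/\kappa_i$ needs the sharper mechanism of Proposition~\ref{2501181139}(ii) rather than the cruder estimate of (i): for any $0<\lambda<\mu<\pi/\kappa_i$ the transverse function $v(x)=\rho^{\lambda}\cos\!\big(\mu(\theta-\tfrac{\kappa_i}{2})\big)$ is strictly positive on $\overline{\Sigma_{\kappa_i}}$, bounded below by a multiple of $\rho^{\lambda}$, time-independent, and --- since $\partial_{33}v=0$ and $\Delta v=(\lambda^2-\mu^2)\rho^{\lambda-2}\cos\!\big(\mu(\theta-\tfrac{\kappa_i}{2})\big)\le0$ --- a genuine supersolution of $\cL$ on $\mathcal{W}_i$. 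Feeding this barrier into a maximum-principle comparison together with the usual dyadic iteration up to the edge (legitimate up to $\partial\cD$ by \eqref{250102420} and \cite[Theorem~III.10.1]{LSU_1968}), exactly as in the proof of Proposition~\ref{2501181139}(ii) (cf. \cite[Theorem~5.25, Remark~5.26]{Seo202411}), gives \eqref{23081054111} in its edge form for every $\lambda<\pi/\kappa_i$, hence $\hat\lambda^\pm_{e,i}(\cD,\cL)\ge\pi/\kappa_i$. For the matching upper bound, the time-independent harmonic function $u_0(x)=\rho^{\pi/\kappa_i}\sin(\pi\theta/\kappa_i)$, extended constantly along the edge, vanishes on $\partial\mathcal{W}_i$ and lies in $\cV_2^{1,0}(Q_r^{\mathcal{W}_i}(t_0,\mathbf{0}))$ after the usual cutoff approximation near $E_{\mathcal{W}_i}$, analogous to that in Proposition~\ref{2501181139}(ii); since $\cL u_0=0$ with zero Dirichlet data and $u_0$ decays precisely like $d(x,E_{\mathcal{W}_i})^{\pi/\kappa_i}$, this forces $\hat\lambda^\pm_{e,i}(\cD,\cL)\le\pi/\kappa_i$.

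The main obstacle is the bookkeeping in step (i): one must check carefully that the Kozlov--Nazarov scheme, designed for cones with a single point vertex, transfers to the translation-invariant wedge with the transverse arc $(0,\kappa_i)$ in the role of the spherical cross-section --- in particular that the specialization to ambient dimension $d=2$ is the correct one and that the ellipticity constants survive restriction to the transverse block --- and, in step (ii), that the barrier-plus-iteration argument (or the appeal to \cite{Seo202411}) really does reach the sharp exponent $\pi/\kappa_i$ for merely measurable-in-$t$ coefficients rather than stopping at the value $-1+\sqrt{\nu_1/\nu_2}\,\pi/\kappa_i$ coming from (i).
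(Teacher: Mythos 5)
Your proposal is essentially sound and reaches the correct bounds, but it partly reinvents what the paper obtains by direct citation. For the quantitative lower bound in (i) and for all of (ii), the paper simply invokes \cite[Theorems 2.4.3, 2.4.6, 2.4.7]{Kozlov_Nazarov_2014}: those theorems are stated for product domains $\mathcal{C}\times\bR^l$ with $\mathcal{C}\subset\bR^m$ an infinite cone, and the case $m=2$, $l=1$ \emph{is} the wedge $W_{\kappa_i}$, with $\hat{\mathcal{E}}_i=(\pi/\kappa_i)^2$ the first Dirichlet eigenvalue on the arc. So the ``transfer from point cones to translation-invariant wedges'' that you flag as the main obstacle is already done in the reference; what actually requires an argument (and what the paper supplies) is (a) the equivalence of Kozlov--Nazarov's critical exponent, defined via cylinders $\widetilde{Q}_r=(t_0-r^2,t_0]\times(\widetilde{B}_r\cap\cD)$, with the one in Definition \ref{def.critical lambda.edges.}(ii) defined via balls, and (b) the invariance of the bound under rotations of $W_{\kappa_i}$, which holds because $\nu_1,\nu_2$ are rotation invariant. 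Your positivity argument for (i) via the H\"older-up-to-the-boundary estimate under the (A)-condition matches Theorem \ref{A3}. For (ii), your explicit barrier $v=\rho^{\lambda}\cos\big(\mu(\theta-\tfrac{\kappa_i}{2})\big)$ (superharmonic, strictly positive, time independent) combined with the comparison principle, and the exact harmonic function $u_0=\rho^{\pi/\kappa_i}\sin(\pi\theta/\kappa_i)$ for the upper bound, constitute a correct and more self-contained alternative to the citation of \cite[Theorems 2.4.3, 2.4.6]{Kozlov_Nazarov_2014}; it buys transparency at the cost of having to verify the maximum principle for $\cV_2^{1,0}$-solutions and continuity up to $\partial\mathcal{W}_i$, which you correctly source from \eqref{250102420} and \cite[Theorem III.10.1]{LSU_1968}.

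One caveat: the remark that a function depending only on the transverse variables is acted on by $\sum_{i,j}a_{ij}(t)D_{ij}$ only through its upper-left $2\times2$ block cannot be used to reduce part (i) to a genuinely two-dimensional problem, because the solutions $u$ entering the definition of $\hat{\lambda}^{\pm}_{e,i}$ depend on the axial variable as well. The weighted energy estimate (the wedge analogue of Lemma \ref{A4}) must be carried out on the full three-dimensional product domain, with the Hardy/eigenvalue inequality applied only in the transverse variables and the axial direction integrated out; this is precisely the content of the Kozlov--Nazarov product-domain analysis, and your ``dictionary'' ($d=2$, eigenvalue $(\pi/\kappa_i)^2$, hence the bound $-1+\sqrt{\nu_1/\nu_2}\,\pi/\kappa_i$) records its outcome rather than a shortcut to it. For part (ii) the objection does not arise, since there the barrier is transverse-only but the comparison principle is applied to the full three-dimensional solution.
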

\begin{proof} 
	(i) For the fact $\hat{\lambda}^{\pm}_{e,i}(\cD,\cL)>0$ we place a detailed proof in Theorem \ref{A3} based on Remark \ref{edge.def.2.}.  

For the second claim of (i), as explained below, one can check that \cite[Theorems 2.4.6 and 2.4.7]{Kozlov_Nazarov_2014} yield
\begin{equation}\label{edge.2dim.1.}
\hat{\lambda}^{\pm}_{e,i}(\cD,\cL)\geq -\frac22+\sqrt{\frac{\nu_1}{\nu_2}}\,\sqrt{\hat{\mathcal{E}}_i+\frac{(2-2)^2}{4}},
\end{equation}
where $\hat{\mathcal{E}}_i$ is the first eigenvalue of the Laplace-Beltrami operator with zero Dirichlet boundary condition on $\big\{(\cos\theta,\sin\theta)\in\bS^1\,:\,\theta\in (0,\kappa_i)\big\}$. In fact we have $\hat{\mathcal{E}}_i=\big(\frac{\pi}{\kappa_i}\big)^2$; see \cite[Remark 3.1]{Green}. Hence, the second claim of (i) holds.

The authors of \cite{Kozlov_Nazarov_2014} consider the domains of the type $\mathcal{C}\times\mathbb{R}^l$, where $\mathcal{C}\subset \mathbb{R}^m$ with $m\ge 2$ is an infinite cone and $l\ge 0$. 
The case $m=2$ and $l=1$ gives $W_\kappa$  in \eqref{W}, which becomes wedged domains under rotation maps; for that matter $W_{\kappa}$ is also a wedged domain. For the domains $W_{\kappa}\subset \mathbb{R}^2\times \bR$,  \cite[Section 2]{Kozlov_Nazarov_2014} defines the critical exponents using the parabolic cylinders of the type
$$
\widetilde{Q}_r^{\cD}(t_0,{\bf{0}})
:=(t_0-r^2,t_0]\,\times (\widetilde{B}_r({\bf{0}}) \cap \cD), 
$$
$$
 \widetilde{B}_r({\bf{0}})=\{ (x_1,x_2,x_3)\in \bR^3:
 \sqrt{x_1^2+x_2^2}<r\,,\,\,|x_3|<r\}
$$
for $t_0\in\bR$ and $r>0$ and by \cite[Theorems 2.4.6 and 2.4.7]{Kozlov_Nazarov_2014} the critical exponents are bounded below by
$-1+\sqrt{\frac{\nu_1}{\nu_2}}\,\frac{\pi}{\kappa}$. From this we get
\begin{equation}\label{edge.2dim.2.}
\hat{\lambda}^{\pm}_{e}(W_{\kappa},\cL)\geq -1+\sqrt{\frac{\nu_1}{\nu_2}}\,\frac{\pi}{\kappa}
\end{equation}
as we note that the definition of the critical exponents for $W_{\kappa}$ in \cite{Kozlov_Nazarov_2014} is  equivalent to our definition of critical exponents for the edges, $\hat{\lambda}^{\pm}_{e}(W_{\kappa},\cL)$, in Definition~\ref{def.critical lambda.edges.} (ii) since
$$
Q_r^{\cD}(t_0,{\bf{0}}) \subset \widetilde{Q}_r^{\cD}(t_0,{\bf{0}})\subset Q_{\sqrt{2}r}^{\cD}(t_0,{\bf{0}})
$$
and $r/2$ in \eqref{23081054111} can be replaced by $\kappa r$ for any fixed constant $\kappa\in (0,1)$ (see the proof of \cite[Lemma 2.2]{Kozlov_Nazarov_2014}).

Since the right hand side of \eqref{edge.2dim.2.} depends only on $\nu_1,\nu_2$, and $\kappa$,   inequality \eqref{edge.2dim.2.} holds for any parabolic operator $L=\frac{\partial}{\partial t}-\sum_{i,j=1}^3 \alpha_{ij}(t)$ in place of $\cL$, provided that the coefficients $\alpha_{ij}$ satisfy the uniform parabolicity condition \eqref{uniform parabolicity} with the same constants $\nu_1$ and $\nu_2$.
Moreover, since the uniform parabolicity constants $\nu_1$ and $\nu_2$ in \eqref{uniform parabolicity} are invariant under rotations, for any rotation $\mathcal{W}$ of $W_{\kappa}$ we have
$$
\hat{\lambda}^{\pm}_{e}(\mathcal{W},\cL)\geq -1+\sqrt{\frac{\nu_1}{\nu_2}}\,\frac{\pi}{\kappa}
$$
and \eqref{edge.2dim.1.} holds as we defined $\hat{\lambda}^\pm_{e,i}(\cD,\cL)=\hat{\lambda}^\pm_{e}(\mathcal{W}_i,\cL)$.

(ii) The claim holds by \cite[Theorems 2.4.3 and 2.4.6]{Kozlov_Nazarov_2014}.
\end{proof}

Here is our main result  on the polyhedral cone $\cD$.

\begin{thm}\label{theorem polyhedral cone}
 For any 
$\lambda^+_o\in \big(0,\hat{\lambda}_o^+(\cD,\cL)\big)$, $\lambda^-_o\in \big(0,\hat{\lambda}_o^-(\cD,\cL)\big)$ and $\lambda^+_{e,i}\in\big(0,\hat{\lambda}_{e,i}^+(\cD,\cL)\big)$, $\lambda^-_{e,i}\in\big(0,\hat{\lambda}_{e,i}^-(\cD,\cL)\big)$, $i=1,2,\ldots,N_0$,
there exists constants $N=N\big(\cD,\cL, \Lambda^+,\Lambda^-\big)>0$ and $\sigma=\sigma(\nu_1,\nu_2)>0$ such that
$$
G(t,s,x,y)\leq N\cdot \mathbf{I}\big(x,\sqrt{t-s};\Lambda^+\big)\mathbf{I}\big(y,\sqrt{t-s};\Lambda^-\big)\,\frac{1}{(t-s)^{3/2}}e^{-\sigma\frac{|x-y|^2}{t-s}}
$$
for any $(t,s,x,y)\in \bR\times \bR\times \cD\times \cD$ with $t>s$, where
$
\Lambda^{\pm}=(\lambda_o^{\pm},\lambda_{e,1}^{\pm},\ldots,\lambda_{e,N_0}^{\pm})$, 
and
$$
\mathbf{I}(x,r;\Lambda^{\pm})=\left(\frac{d(x,V)}{r}\wedge 1\right)^{\lambda^{\pm}_0}\left(\prod_{i=1}^{N_0}\left(\frac{d\big(x,E_i\big)\wedge r}{d\big(x,V\big)\wedge r}\right)^{\lambda^{\pm}_{e,i}}\right) \frac{d(x,\partial\cD)\wedge r}{d(x,\hat{E})\wedge r}.
$$
\end{thm}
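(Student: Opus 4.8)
The plan is to obtain the estimate by combining the Aronson--Nash Gaussian upper bound for $G$ on domains satisfying the $(A)$-condition with the decay of caloric functions encoded in the critical exponents of Definition \ref{def.critical lambda.edges.}, and by separating the two spatial variables through the Chapman--Kolmogorov identity for $G$ (see Appendix \ref{sec:appendix A}). Since $\cD$ satisfies \eqref{250102420}, one has the baseline bound $G(t,s,x,y)\le N\,1_{t>s}\,(t-s)^{-3/2}e^{-\sigma_0|x-y|^2/(t-s)}$. All the relevant objects are invariant under the parabolic dilations $(t,x)\mapsto(\mu^2 t,\mu x)$ (under which $\cD$ is unchanged and $a_{ij}(t)\mapsto a_{ij}(\mu^2 t)$ still satisfies \eqref{uniform parabolicity} with the same $\nu_1,\nu_2$), and $\mathbf I(x,\mu r;\Lambda^\pm)\simeq\mathbf I(x,r;\Lambda^\pm)$ for $\mu$ ranging over a compact subset of $(0,\infty)$; hence it suffices to prove the one-sided bound
$$
G(t,s,x,y)\le N\,\mathbf I\big(x,\sqrt{t-s};\Lambda^+\big)\,(t-s)^{-3/2}e^{-\sigma|x-y|^2/(t-s)}
$$
together with its mirror image carrying $\mathbf I\big(y,\sqrt{t-s};\Lambda^-\big)$. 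Indeed, writing $G(t,s,x,y)=\int_{\cD}G(t,m,x,z)G(m,s,z,y)\,dz$ with $m=\tfrac{t+s}{2}$, inserting the $x$-sided bound into the first factor and the $y$-sided bound into the second, enlarging the $z$-integral to $\bR^3$, and using $\int_{\bR^3}e^{-\sigma|x-z|^2/\tau}e^{-\sigma|z-y|^2/\tau}\,dz\simeq\tau^{3/2}e^{-\sigma'|x-y|^2/\tau}$ with $\tau=\tfrac{t-s}{2}$ yields \eqref{eqn main}.

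The core of the argument is a purely deterministic statement that I would isolate as a lemma: \emph{if $u\in\cV_2^{1,0}(Q_r^{\cD}(t_0,{\bf 0}))$ solves $\cL u=0$ in $Q_r^{\cD}(t_0,{\bf 0})$ with zero lateral boundary values, then}
$$
|u(t,x)|\le N\Big(\tfrac{d(x,V)}{r}\wedge1\Big)^{\lambda^+_o}\prod_{i=1}^{N_0}\Big(\tfrac{d(x,E_i)\wedge r}{d(x,V)\wedge r}\Big)^{\lambda^+_{e,i}}\tfrac{d(x,\partial\cD)\wedge r}{d(x,\hat E)\wedge r}\,\sup_{Q_r^{\cD}(t_0,{\bf 0})}|u|
$$
\emph{for all $(t,x)\in Q_{r/2}^{\cD}(t_0,{\bf 0})$, with $N=N(\cD,\cL,\Lambda^+)$.} Granting the lemma, the one-sided bound follows by applying it with $t_0=t$ and $r=\tfrac12\sqrt{t-s}$: the cylinder bottom $t-r^2>s$ keeps $u:=G(\cdot,s,\cdot,y)$ caloric on $Q_r^{\cD}(t,{\bf 0})$, its local $\cV_2^{1,0}$-membership and vanishing lateral trace following from interior parabolic theory and \cite[Theorem III.10.1]{LSU_1968} exactly as in the proof of Proposition \ref{2501181139}, and the resulting $\sup_{Q_r^{\cD}(t,{\bf 0})}G(\cdot,s,\cdot,y)$ is controlled by the baseline Gaussian bound (the mismatch between $|x-y|$ and $|z-y|$, $z\in B_r({\bf 0})$, being absorbed into a slightly smaller $\sigma$); when $d(x,V)\ge\tfrac14\sqrt{t-s}$ the vertex factor is $\simeq1$ and only the edge/flat-boundary portion of the lemma is used. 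The $y$-sided bound is identical once one notes that $(\tau,y)\mapsto G(t,-\tau,x,y)$ is $\tilde{\cL}$-caloric in $\cD$ with vanishing lateral trace, so the lemma applies to the operator $\tilde{\cL}$ of \eqref{another operator} with the exponents $\hat\lambda^-$, which are by definition those of $\tilde{\cL}$.

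The lemma itself I would prove by a dyadic iteration of three elementary decay estimates. Decompose $B_{r/2}^{\cD}({\bf 0})$ into the annuli $\{2^{-k-1}r\le d(x,V)<2^{-k}r\}$, $k\ge1$, and each annulus into the near-edge parts $\{d(x,E_i)\le\varepsilon_0\,d(x,V)\}$ (with $\varepsilon_0$ fixed by the geometry of $\cD$), the near-face parts, and the bulk $\{d(x,V)\simeq d(x,E_i)\simeq d(x,\partial\cD)\}$. From the largest admissible scale down to $d(x,V)$ one chains \eqref{23081054111} on cylinders centred at $V$, each step contributing a fixed power of the scale ratio and the geometric series summing to $(d(x,V)/r)^{\lambda^+_o}$; from $d(x,V)$ down to $d(x,E_i)$ one chains the edge estimate of Definition \ref{def.critical lambda.edges.}(ii), in the form of Remark \ref{edge.def.2.}, on balls centred at the nearest point $\xi\in E_i$ of radius $\lesssim r_0|\xi|$, which by Remark \ref{distance to edges} are genuine spherical wedges meeting no other edge and not the vertex, producing the factor $(d(x,E_i)/d(x,V))^{\lambda^+_{e,i}}$; and from $d(x,E_i)$ down to $d(x,\partial\cD)$ one uses the standard linear boundary decay on the half-ball charts of Assumption \ref{ass 1}(ii), producing $d(x,\partial\cD)/d(x,\hat E)$. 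At each step the intermediate supremum is comparable to $\sup_{Q_r^{\cD}(t_0,{\bf 0})}|u|$ since the baseline Gaussian bound is essentially flat on each cylinder; all the series converge because every $\lambda>0$, and the telescoping of the ratios $(d(x,E_i)\wedge r)/(d(x,V)\wedge r)$ reproduces exactly the product defining $\mathbf I$. I expect the genuine difficulty to be this bookkeeping: verifying that every cylinder along the chain is admissible for the critical-exponent estimate it invokes (in particular that the near-edge balls, whose radii are constrained by Assumption \ref{ass 1}, avoid every other singular stratum), that the crossover between the vertex-dominated and edge-dominated regimes is spanned by a chain of boundedly many estimates with constants independent of $x$, and that the total constant depends only on $\cD,\cL,\Lambda^+$ — this is precisely where the geometry of $\cD$ enters the admissible ranges of the exponents and where one must take care not to accumulate a scale-dependent loss.
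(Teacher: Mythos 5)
Your architecture is sound and your key lemma is, up to the choice of center, exactly the paper's Lemma \ref{21.11.02.2}; the two proofs differ mainly in how the variables $x$ and $y$ are separated. You use the Chapman--Kolmogorov identity \eqref{250114346} at the midpoint together with the Gaussian convolution identity, deducing the two-sided bound from two one-sided bounds; the paper instead never convolves: it applies the weighted decay lemma to $u=G(\cdot,s,\cdot,y)$ on $Q_\rho^{\cD}(t,x)$ with $\rho=\tfrac12\sqrt{t-s}$, then, for each point $(t',z)$ of that cylinder, applies the time-reversed lemma to $w\mapsto G(t',-\cdot,z,w)$ on $Q_\rho^{\cD}(-s,y)$, and finally bounds the resulting double supremum by the baseline Gaussian estimate \eqref{250102531}. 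Both routes are valid and cost the same; yours needs the semigroup property, the paper's needs only the pointwise Gaussian bound. A second organizational difference: the paper's Lemma \ref{21.11.02.2} is stated for cylinders centred at an \emph{arbitrary} $x_0\in\cD$ (with the estimate on $Q_{r/64}^{\cD}(t_0,x_0)$), so a single application centred at $x$ itself covers all positions of $x$; your vertex-centred version forces the separate treatment of $d(x,V)\geq\tfrac14\sqrt{t-s}$ that you sketch, which is workable but is precisely the case analysis the paper has already absorbed into the lemma.

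The one step that would fail as written is the dyadic chaining inside your proof of the core lemma. Iterating \eqref{23081054111} from scale $2^{-k}r$ to $2^{-k-1}r$ contributes a factor $N\,2^{-\lambda}$ per step, so after $k$ steps you get $N^k2^{-k\lambda}=2^{-k(\lambda-\log_2N)}$: unless $N\leq 1$ the geometric series does \emph{not} sum to $(d(x,V)/r)^{\lambda_o^+}$, and the exponent degrades by $\log_2 N$, which is not controllable. No chaining is needed: by Definition \ref{def.critical lambda.edges.} the estimate \eqref{23081054111} already holds at the single scale $r$ for \emph{every} $(t,x)\in Q^{\cD}_{r/2}$, however small $d(x,V)$ is. The correct composition, which is what the paper does, uses exactly three nested cylinders: the vertex estimate at scale $r$, the edge estimate of Lemma \ref{estimate.edges.} at scale $\simeq r_0(d(x,V)\wedge r)$, and the linear boundary decay of Lemma \ref{half space.estimate.} at scale $\simeq r_1(d(x,E_i)\wedge r)$, each intermediate supremum being bounded by the output of the previous stage rather than by the flatness of the Gaussian. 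With that replacement (and the admissibility checks you already flag, which Remark \ref{distance to edges} supplies), your argument goes through.
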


\begin{remark}
\label{remark decay}
The function $I(x,r;\Lambda^{\pm})$ gives information on the behavior of the Green function near the vertex, edges, and $\partial \cD$.

\begin{enumerate}[label=(\roman*)]
\item  Obviously ${\bf{I}}(x,r;\Lambda^{\pm})\leq 1$ since
 $$d(x, \partial \cD)\leq d(x,E_i)\leq d(x,V), \quad \forall \,i.
$$

\item  If $E_k$ is the closest edge to $x$ so that 
 $d(x,\hat{E})=d(x,E_k)$, then there exists $\delta>0$ such that $\frac{d(x,E_i)}{d(x,V)}=d\left(\frac{x}{|x|},E_i\right)\simeq d\left(\frac{x}{|x|},p_i\right)\geq \delta$ for all $i\neq k$ and hence
\begin{eqnarray*}
{\bf{I}}(x,r;\Lambda^{\pm}) \simeq \left(\frac{d(x,V)}{r}\wedge 1\right)^{\lambda^{\pm}_0}
\left(\frac{d\big(x,E_k\big)\wedge r}{d\big(x,V\big)\wedge r}\right)^{\lambda^{\pm}_{e,k}} \frac{d(x,\partial\cD)\wedge r}{d(x,E_k)\wedge r}
\end{eqnarray*}
holds; also see Lemma \ref{21.11.17.10}.(ii).

\item  If $x$ is away from all edges, that is, if 
there exists a constant $\delta>0$ such that $d\big(\frac{x}{|x|},\hat{E}\big)=\frac{d(x,\hat{E})}{d(x,V)} \geq \delta$, then $d(x,E_i)\leq d(x,V)\leq \delta^{-1}d(x,E_i)$ for all $i$. Hence, we have
$$
{\bf{I}}(x,r;\Lambda^{\pm}) \simeq \left(\frac{d(x,V)}{r}\wedge 1\right)^{\lambda^{\pm}_o-1}
\left(\frac{d(x,\partial\cD)}{r}\wedge 1\right).
$$
Of course, the relation $\simeq$ depends on $\delta$.

\item  If $x$ is away from the boundary, that is, if 
there exists $\delta>0$ such that $d\big(\frac{x}{|x|},\partial \cD\big)=\frac{d(x,\partial\cD)}{d(x,V)} \geq \delta $, then $d(x,\partial\cD)\leq d(x,E_i)\leq d(x,V)\leq \delta^{-1}d(x,\partial\cD)$ for all $i$
and hence 
$$
{\bf{I}}(x,r;\Lambda^{\pm}) \simeq  \left(\frac{d(x,V)}{r}\wedge 1\right)^{\lambda^{\pm}_o}
$$
holds.
\end{enumerate}

\end{remark}

\subsection{Green's function estimate on  polyhedrons}

\indent
A polyhedron  $\cP$ is a bounded three-dimensional object with finite number of  polygonal faces, straight edges and sharp corners or vertices.

We pose the following natural assumptions on $\cP$; we use notation $B(x,r):=B_r(x)$.

\begin{assumption}\label{250102724}
\label{ass 2} Let  $\hat{V}:=\{V_1,V_2,\ldots,V_{M_0}\}$ denote the set of all vertices of $\cP$, and  $\widehat{E}:=\bigcup_{j=1}^{N_0}E_j$ denote the union of all edges of $\cP$. 

\begin{enumerate}[label=(\roman*)]

\item $V_i$ are separated points in $\bR^3$, and each $E_i$ is a closed interval connecting two vertices.
    
\item  Each $E_i$ contains no other vertices except its two end points, and  $E_i\cap E_j\subset \hat{V}$.

\item There exist $r_0,\,r_1,\,r_2\in(0,1)$ such that

		\begin{itemize}
\item[(a)] for each $V_i$, there exists an infinite polyhedral cone $\cD_{v,i}$ with vertex $\bf{0}$ such that
\begin{align}\label{241014218}
\cP\cap B(V_i,r_0)=V_i+(\cD_{v,i} \cap B({\bf{0}},r_0));
\end{align}

\item[(b)] for each $E_j$, there exists a wedged domain $\mathcal{W}_{e,j}$ with angle $\kappa_i\in(0,2\pi)\setminus\{\pi\}$ such that for any $p\in E_j\setminus \hat{V}$,
\begin{align}\label{241125633}
\cP\cap B\big(p,r_1d(p,\hat{V})\big)= p+\big(\mathcal{W}_{e,j}\cap B\big({\bf{0}},r_1d(p,\hat{V})\big);
\end{align}

\item[(c)] for each $\xi\in \partial\cP\setminus \hat{E}$,\,
$\cP\cap B\big(\xi,r_2d(\xi,\hat{E})\big)$ is a   translation and rotation of $\bR_+^3\cap B\big({\bf{0}},r_2d(\xi,\hat{E})\big)$.

		\end{itemize}    
\end{enumerate} 
	
\end{assumption}

\begin{remark}\label{250306852}
One can observe, by applying Lebesgue's number lemma, that Assumptions \ref{250102724}.(iii) is equivalent to the following:

\begin{itemize}
	\item[(a)] For each $V_i$, there exists an infinite polyhedral cone $\cD_{v,i}$ with vertex $\bf{0}$, and a constant $r_{v,p}>0$ such that
	\begin{align*}
		\cP\cap B(V_i,r_{v,i})=(V_i+\cD_{v,i}) \cap B({\bf{0}},r_{v,i})\,.
	\end{align*}

	\item[(b)] For each $E_j$, there exists a wedged domain $\mathcal{W}_{e,j}$ with angle $\kappa_i\in(0,2\pi)\setminus\{\pi\}$ such that, for any $p\in E_j\setminus \hat{V}$, there exists a constant $r_{e,p}>0$ satisfying
	\begin{align*}
		\cP\cap B\big(p,r_{e,p}\big)= p+\big(\mathcal{W}_{e,j}\cap B\big({\bf{0}},r_{e,p})\big)\,.
	\end{align*}

	\item[(c)] For each $\xi\in \partial\cP\setminus \hat{E}$, there exists a constant $r_{f,\xi}>0$ such that
	$\cP\cap B\big(\xi,r_{f,\xi}\big)$ is a translation and rotation of $\bR_+^3\cap B\big({\bf{0}},r_{f,\xi}\big)$.
	
\end{itemize} 
 
\end{remark}

 We  define the critical exponents for edges and vertices of polyhedron $\cP$ based on Definition \ref{def.critical lambda.edges.}.

\begin{defn}\label{241227911}\,
	
	\begin{enumerate}[label=(\roman*)]
	\item (\emph{critical exponent for the vertices})  Let $\cD_{v,i}$ ($i=1,\,\ldots,\,M_0$) be the infinite polygonal cones from \eqref{241014218}.
	For each $i$, we define $\hat{\lambda}^+_{v,i}(\cP,\cL):=\hat{\lambda}_o^+(\cD_{v,i},\cL)$. In other words, $\hat{\lambda}^+_{v,i}(\cP,\cL)$ is 
the supremum of the positive constants $\lambda$ each of which allows a constant $N=N(\cD_{v,i}, \cL,\lambda)>0$ fulfilling the following:
	for any $t_0\in\bR$, $r>0$ and  $u\in\cV_2^{1,0}(Q_r^{\cD_{v,i}}(t_0,\bf{0}))$ satisfying 
$$
\cL u=0\quad\text{in}\quad Q_r^{\cD_{v,i}}(t_0,{\bf{0}})\quad;\quad u|_{(\bR\times\partial\cD_{v,i})\cap Q_r(t_0,{\bf{0}})}=0
$$
(see Definition \ref{250219615}),
	the estimate
	\begin{align*}
		|u(t,x)|\leq N \Big(\frac{d(x,V)}{r}\Big)^{\lambda}\sup_{Q^{\cD}_r(t_0,\bf{0})}|u|\quad\quad\forall \,\,\,
		(t,x)\in Q^{\cD}_{r/2}(t_0,\bf{0})
	\end{align*}
	holds. 
	Another constant $\hat{\lambda}^-_{v,i}(\cP,\cL)$ is similarly defined  using the operator $\tilde{\cL}$ in \eqref{another operator} instead of $\cL$.

	\item (\emph{critical exponents for edges})  For each $j=1,2,\ldots,N_0$ let $\mathcal{W}_{e,j}$ be the wedged domain from \eqref{241125633}. 
	We define $\hat{\lambda}^+_{e,j}(\cP,\cL):=\hat{\lambda}^+(\mathcal{W}_{e,j},\cL)$. In other words, $\hat{\lambda}^+_{e,j}(\cP,\cL)$
 	is the supremum of the positive constants $\lambda$ each of which allows us to have a constant $N=N(\mathcal{W}_{e,j}, \cL,\lambda)>0$ fulfilling the following: 
	for any $t_0\in\bR$, $r>0$ and  $u\in\cV_2^{1,0}(Q_r^{\mathcal{W}_{e,j}}(t_0,\bf{0}))$ satisfying 
$$
\cL u=0\quad\text{in}\quad Q_r^{\mathcal{W}_{e,j}}(t_0,{\bf{0}})\quad;\quad u|_{(\bR\times\partial\mathcal{W}_{e,j})\cap Q_r(t_0,{\bf{0}})}=0,
$$
	the estimate
	$$
	|u(t,x)|\leq N \Big(\frac{d(x,E_i)}{r}\Big)^{\lambda}\sup_{Q^{\mathcal{W}_{e,j}}_r(t_0,\xi_0)}|u|\quad\;\;\forall \,\,\,
	(t,x)\in Q^{\mathcal{W}_{e,j}}_{r/2}(t_0,\xi_0)
	$$
	holds. 
	Another constant $\hat{\lambda}^-_{e,j}(\cP,\cL)$ is similarly defined  using the operator $\tilde{\cL}$ in \eqref{another operator} instead of $\cL$.
	
	\end{enumerate}
\end{defn}

\begin{example} 
	Let $\cP$ be a platonic solid which is a convex regular polyhedron in $\bR^3$. Then $\cP$ is one of the following: tetrahedron (four equilateral triangle faces), cube (six square faces), octahedron (eight equilateral triangle faces), dodecahedron (twelve regular pentagon faces), or icosahedron (twenty equilateral triangle faces).
All edges of $\cP$ has the same inner angles.
Moreover, all polyhedral cones $\cD_v$ corresponding vertices $v$ of $\cP$ are the same under rotations ($\cD_v$ is a polyhedral cone such that $v+\big(\cD_v\cap B_r(0)\big)=\cP\cap B_r(v)$ for small enough $r>0$).
Hence solid angles of at the vertices of $\cP$ are exactly the same,  where the solid angle at $v\in\cP$ is the surface area of $\cM_v\subset\bS^2$ which generates the polyhedral cone $\cD_v$. 
 By using the common inner angle at edges, the common solid angle at vertices,  and  Remark \ref{250220324}, we can calculate lower bounds of $\hat{\lambda}_{v,i}^\pm(\cP,\cL)$ in Proposition \ref{2501181139} and lower bounds of $\hat{\lambda}_{e,j}^\pm(\cP,\cL)$ in Proposition \ref{250213305}; see the table below.

	\begin{table}[h]
\resizebox{\textwidth}{!}{
		\begin{tabular}{|c|c|c|c|c|c|}
\hline
			\begin{tabular}[c]{@{}c@{}}Platonic\\solid\end{tabular}
			& \begin{tabular}[c]{@{}c@{}}Tetrahedron\end{tabular} & Cube & \begin{tabular}[c]{@{}c@{}}Octahedron\end{tabular} & \begin{tabular}[c]{@{}c@{}}Dodecahedron\end{tabular} & \begin{tabular}[c]{@{}c@{}}Icosahedron\end{tabular} \\ \hline
			\begin{tabular}[c]{@{}c@{}}Inner angle\\ at edges\end{tabular} 
			&                                           $\arctan (2\sqrt{2})$                                         &  $\frac{\pi}{2}$                         &   $\pi-\arctan (2\sqrt{2})$                                                                                &  $\pi-\arctan (2)$                                &  $\pi-\arctan \big(\frac{2\sqrt{5}}{5}\big)$                                         \\ \hline
			\begin{tabular}[c]{@{}c@{}}Solid angle\\ at vertices\end{tabular} &               $\arccos\big(\frac{23}{27}\big)$                                                                     &       $\frac{\pi}{2}$                    &         $4\arcsin\big(\frac{1}{3}\big)$                                                                          &       $\pi-\arctan\big(\frac{2}{11}\big)$                           &          $2\pi-5\arcsin\big(\frac{2}{3}\big)$                                 \\ \hline
		\end{tabular}
}
\begin{flushright}
(for the calculation of solid angles, see \cite{Mazonka2012})
\end{flushright}
	\end{table}
	
\end{example}

Here is our Green's function estimate on the polyhedron $\cP$.

\begin{thm}\label{theorem polyhedron}
	Let $\lambda_{v,i}^\pm\in\big(0,\hat{\lambda}_{v,i}^\pm(\cP,\cL)\big)$ and $\lambda_{e,j}^\pm\in \big(0,\hat{\lambda}_{e,j}^\pm(\cP,\cL)\big)$.
	Put $\Lambda^{\pm}:=\big((\lambda_{v,i}^{\pm})_{1\leq i\leq M_0},\,(\lambda_{e,j}^{\pm})_{1\leq j\leq N_0}\big)$.
	
	\begin{enumerate}[label=(\roman*)]
	\item There exist constants $N=N\big(\cP,\cL, \Lambda^\pm\big)>0$ and $\sigma=\sigma(\nu_1,\nu_2)>0$ such that the inequality
	\begin{align*}
		G_{\cP}(t,s,x,y)\leq N\, {\bf{I}}\left(x,\sqrt{t-s};\Lambda^+\right){\bf{I}}\left(y,\sqrt{t-s};\Lambda^-\right)\cdot (t-s)^{-3/2}e^{-\sigma\frac{|x-y|^2}{t-s}}
	\end{align*}
	holds for all $t\geq s$ and $x,\,y\in \cP$, where
	\begin{align*}
	&{\bf{I}}(x,r;\Lambda^{\pm})\\
	&:=\left(\,\prod_{i=1}^{M_0}\left(\frac{d(x,V_i)}{r}\wedge 1\right)^{\lambda^{\pm}_{v,i}}\right)\cdot \left(\,\prod_{j=1}^{N_0}\left(\frac{d(x,E_j)\wedge r}{d\big(x,\hat{V}\cap E_j\big)\wedge r}\right)^{\lambda^{\pm}_{e,j}}\right)\cdot \frac{d(x,\partial\cP)\wedge r}{d(x,\hat{E})\wedge r}. \nonumber
	\end{align*}

	\item   For any $T_0>0$, there exists a constant $N=N\big(\cP,\cL,\Lambda^{\pm},T_0\big)>0$ such that  if $x,y\in \cP$ and  $t-s\geq T_0$, then
	\begin{align*}
		G_{\cP}(t,s,x,y)\leq N\cdot {\bf{I}}_{\infty}(x;\Lambda^+){\bf{I}}_{\infty}(y;\Lambda^-)e^{-\nu_1\lambda (t-s)}
	\end{align*}
	where 
	$$
	{\bf{I}}_{\infty}(x;\Lambda^\pm):=\left(\,\prod_{i=1}^{M_0}d(x,V_i)^{\lambda_{v,i}^\pm}\right)\cdot\left(\,\prod_{j=1}^{N_0}\left(\frac{d(x,E_j)}{d\big(x,\hat{V}\cap E_j\big)}\right)^{\lambda_{e,j}^\pm}\right)\cdot \frac{d(x,\partial\cP)}{d(x,\hat{E})}\,.
	$$
	\end{enumerate}
\end{thm}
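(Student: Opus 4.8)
\emph{Overall plan.} I would argue by a bootstrap in the lag $t-s$: prove part (i) for small lags, deduce part (ii), and then recover part (i) for large lags from part (ii). The two external inputs are the a priori Gaussian bound $G_{\cP}(t,s,x,y)\le N\,\Gamma_{\sigma_0}(t-s,x-y)$ ($\sigma_0=\sigma_0(\nu_1,\nu_2)$), which holds since $\cP$ is bounded and satisfies the volume-density condition \eqref{250102420} (Aronson-type estimate, e.g.\ \cite{LSU_1968,Aronson1968}), and the reproduction identity $G_{\cP}(t,s,x,y)=\int_{\cP}G_{\cP}(t,m,x,z)G_{\cP}(m,s,z,y)\,dz$. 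The key observation is that, for fixed $(s,y)$, $u(t,x):=G_{\cP}(t,s,x,y)$ solves $\cL u=0$ with zero lateral boundary values in $\big((s,\infty)\times\cP\big)\setminus\{(s,y)\}$, and for every $R<\sqrt{t-s}$ a cylinder $Q^{\cP}_{R}(t,x_0)$ misses $(s,y)$; hence the classical half-space decay, the wedge decay of Definition~\ref{241227911}(ii) and the vertex decay of Definition~\ref{241227911}(i) are all available to $u$ inside such cylinders, at every scale below $\sqrt{t-s}$.

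\emph{Reduction to a one-sided estimate.} Fix $T_0=T_0(\cP)>0$ small enough (below a fixed multiple of $\min(r_0,r_1,r_2)^2$ and of the squared minimal feature size of $\cP$) that every model structure from Assumption~\ref{250102724}(iii) is available at scale $\sqrt{T_0}$ wherever it is needed. I would first prove, for $t-s\le T_0$, the one-sided bound
\begin{equation*}
G_{\cP}(t,s,x,y)\le N\,\mathbf{I}\big(x,\sqrt{t-s};\Lambda^+\big)\,\Gamma_{\sigma_1}(t-s,x-y),\qquad \sigma_1<\sigma_0,\tag{$\star$}
\end{equation*}
and its mirror image carrying $\mathbf{I}(y,\sqrt{t-s};\Lambda^-)$; the latter is $(\star)$ for the time-reversed operator $\tilde\cL$ of \eqref{another operator} (whose critical exponents are the $\Lambda^-$-exponents), applied to $v(\tau,y):=G_{\cP}(t,-\tau,x,y)$, which solves $\tilde\cL v=0$ with zero lateral boundary values. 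Granting both, part (i) for $t-s\le T_0$ follows from the reproduction identity with $m=(s+t)/2$ upon bounding the first factor by the $x$-sided estimate and the second by the $y$-sided estimate, using $\int_{\cP}\Gamma_{\sigma_1}\Gamma_{\sigma_1}\,dz\le N\Gamma_{\sigma_2}(t-s,x-y)$ and $\mathbf{I}(\cdot,\sqrt{(t-s)/2};\Lambda^{\pm})\simeq\mathbf{I}(\cdot,\sqrt{t-s};\Lambda^{\pm})$ (Lemma~\ref{21.11.17.10}).

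\emph{Proof of $(\star)$ by nested zooming.} Fix $\rho=\sqrt{t-s}\le\sqrt{T_0}$ and $x\in\cP$. If $\mathbf{I}(x,\rho;\Lambda^+)\simeq1$ (that is, $x$ is $\rho$-far from $\hat{V}$, $\hat{E}$ and $\partial\cP$), then $(\star)$ is just the a priori bound after a harmless loss $\sigma_0\to\sigma_1$ absorbing the spatial spread of $\Gamma$ over a sub-$\rho$ cylinder. Otherwise I would apply, in this order, at most three local decay estimates — one per stratum — each run over the \emph{whole} range of scales its model region permits: because the cone, the wedge and the half-space are scale invariant, a single application spans all admissible radii, so only the fixed constants of Definition~\ref{241227911} and of the half-space estimate enter, and no iteration (hence no compounding of constants) is needed. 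Thus near a vertex $V_i$ the vertex decay, run from $\rho$ down to $d(x,V_i)$, yields the factor $\big(\frac{d(x,V_i)}{\rho}\wedge1\big)^{\lambda^+_{v,i}}$; inside the resulting cylinder the wedge decay for an edge $E_j$ near $x$, run from $\simeq\min(\rho,d(x,\hat{V}\cap E_j))$ down to $d(x,E_j)$, yields $\big(\frac{d(x,E_j)\wedge\rho}{d(x,\hat{V}\cap E_j)\wedge\rho}\big)^{\lambda^+_{e,j}}$; inside that the half-space decay (cf.\ \cite{Cho,Davies}, via a barrier and the maximum principle) yields $\frac{d(x,\partial\cP)\wedge\rho}{d(x,\hat{E})\wedge\rho}$. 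For the strata that $x$ is far from, the corresponding factor is $\simeq1$, so the product telescopes to $\mathbf{I}(x,\rho;\Lambda^+)$, while the supremum of $\Gamma_{\sigma_0}(\cdot-s,\cdot-y)$ over the top cylinder is $\le N\Gamma_{\sigma_1}(t-s,x-y)$ by the elementary inequality $\sup_{|z-y|\ge|x-y|-c\rho,\ \tau-s\simeq\rho^2}\Gamma_{\sigma_0}(\tau-s,z-y)\le N\Gamma_{\sigma_1}(t-s,x-y)$ for $\sigma_1<\sigma_0$. This gives $(\star)$, and hence part (i) for $t-s\le T_0$.

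\emph{Long lags and conclusion; the hard part.} For $t-s\ge T_0$ I would iterate the reproduction identity twice, with $\tau_1=s+T_0/4$ and $\tau_2=t-T_0/4$, and bound $\sup_z G_{\cP}(t,\tau_2,x,z)\le N_{T_0}\mathbf{I}_\infty(x;\Lambda^+)$ and $\int_{\cP}G_{\cP}(\tau_1,s,w,y)\,dw\le N_{T_0}\mathbf{I}_\infty(y;\Lambda^-)$ using part (i) over the fixed gap $T_0/4$ together with $\mathbf{I}(\cdot,\sqrt{T_0/4};\Lambda^{\pm})\simeq_{T_0}\mathbf{I}_\infty(\cdot;\Lambda^{\pm})$ on the bounded set $\cP$, while $\int_{\cP}G_{\cP}(\tau_2,\tau_1,z,w)\,dz\le Ne^{-\nu_1\lambda(\tau_2-\tau_1)}$ by the spectral gap of the Dirichlet realization of $-\sum a_{ij}D_{ij}$ on $\cP$; since $\tau_2-\tau_1=(t-s)-T_0/2$, this yields part (ii) after absorbing $e^{\nu_1\lambda T_0/2}$ into $N$. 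Part (i) for $t-s>T_0$ then follows from part (ii) via the elementary comparisons $\mathbf{I}_\infty(\cdot;\Lambda^{\pm})\le N_{\cP}\,\mathbf{I}(\cdot,\sqrt{t-s};\Lambda^{\pm})$, $e^{-\nu_1\lambda(t-s)}\le N_{T_0}(t-s)^{-3/2}$, and $(t-s)^{-3/2}\le N_{T_0}\Gamma_{\sigma}(t-s,x-y)$ for $x,y\in\cP$. The main obstacle is the nested zoom for $(\star)$: one must choose the scale windows so that consecutive cylinders stay in a single model region and miss $(s,y)$, check that the three factors multiply to \emph{exactly} $\mathbf{I}(x,\sqrt{t-s};\Lambda^+)$ — in particular that the truncations $\wedge\sqrt{t-s}$ are precisely those forced by the ban on radii $\ge\sqrt{t-s}$ — and treat the configurations in which $x$ is simultaneously near a vertex and an edge meeting it, which requires interleaving the vertex and edge estimates exactly as in the proof of Theorem~\ref{theorem polyhedral cone}, or near several vertices at once.
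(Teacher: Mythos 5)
Your proposal is correct and follows essentially the same architecture as the paper's proof: a local boundary-decay estimate carrying the full mixed weight $\mathbf{I}(x,r;\Lambda)$, obtained by a stratified case analysis that nests cylinders adapted to the vertex, edge, and half-space model regions (the paper's Lemmas \ref{half space.estimate.}, \ref{estimate.edges.}, \ref{21.11.02.2}, \ref{22.03.16.7}, \ref{21.11.22.3}), applied to $G(\cdot,s,\cdot,y)$ and, via the time-reversed operator $\tilde\cL$, in the $y$-variable, combined with the a priori Gaussian bound for short lags and with the $L_2$ energy/Poincar\'e decay plus the reproduction identity for part (ii) and for long lags in part (i). The only deviations are mechanical and harmless: the paper obtains part (i) by taking a supremum of $G$ over the second cylinder and invoking the pointwise Gaussian bound rather than convolving two one-sided estimates at the midpoint, and its part (ii) uses a single midpoint splitting inside Lemma \ref{22.03.15.6} rather than your triple splitting.
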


\begin{remark}
	\label{remark decay 2}
	The functions ${\bf{I}}(x,r;\Lambda^{\pm})$  and ${\bf{I}}_{\infty}(x;\Lambda^\pm)$ characterizes the behavior of the Green function near the vertex, edges and $\partial \cD$.
	
	\begin{enumerate}[label=(\roman*)]
	\item Obviously ${\bf{I}}(x,r;\Lambda^{\pm})\leq 1$ and ${\bf{I}}_{\infty}(x;\Lambda^{\pm})\leq \mathrm{diam}(\cP)^{\sum_{i=1}^{M_0}\lambda_{v,i}^\pm}$ since
	$$d(x, \partial \cD)\leq d(x,E_j)\leq d(x,V_i), \quad \forall \,i,\,j\quad\text{with}\quad V_i\subset E_j.
	$$

	\item If $V_k$ is the closest vertex to $x$ and $E_l$ is the closest edge to $x$ so that $d(x,\hat{V})=d(x,V_k)$ and $d(x,\hat{E})=d(x,E_l)$, then
	\begin{align*}
		{\bf{I}}(x,r;\Lambda^{\pm}) \,&\simeq \left(\frac{d(x,V_k)}{r}\wedge 1\right)^{\lambda^{\pm}_{v,k}}
		\left(\prod_{j:E_j\ni V_k}^{N_0}\left(\frac{d(x,E_j)\wedge r}{d\big(x,V_k\big)\wedge r}\right)^{\lambda_{e,j}^\pm}\right) \frac{d(x,\partial\cP)\wedge r}{d(x,\widehat{E})\wedge r}\\
		&\simeq \left(\frac{d(x,V_k)}{r}\wedge 1\right)^{\lambda^{\pm}_{v,k}}
	\left(\frac{d\big(x,E_l\big)\wedge r}{d\big(x,V_k\big)\wedge r}\right)^{\lambda^{\pm}_{e,k}} \frac{d(x,\partial\cP)\wedge r}{d(x,E_l)\wedge r}
	\end{align*}
	holds; see Lemma \ref{22.03.16.7}.
	Since $${\bf{I}}_{\infty}(x;\Lambda^\pm)=(\mathrm{diam}(\cP))^{\sum_{i=1}^{M_0} \lambda_{v,i}^{\pm}}  \cdot {\bf{I}}(x,\mathrm{diam}(\cP);\Lambda^\pm) \simeq  {\bf{I}}(x,\mathrm{diam}(\cP);\Lambda^\pm) ,
$$  similar result also holds for ${\bf{I}}_{\infty}(x;\Lambda^{\pm})$.
	\end{enumerate}
	\end{remark}

\begin{remark}
When $t-s>0$ is large, Theorem \ref{theorem polyhedron}.(ii) provides a sharper result compared to Theorem \ref{theorem polyhedron}.(i).
More precisely, for any $T_0>0$, there exists a constant $N>0$ such that if $r\geq T_0$, then
$$
{\bf{I}}(x,\sqrt{r};\Lambda^+){\bf{I}}(y,\sqrt{r};\Lambda^-)r^{-3}e^{-\sigma |x-y|^2/r}\leq N {\bf{I}}_\infty(x;\Lambda^+){\bf{I}}_\infty(y;\Lambda^-)e^{-\nu_1 \lambda r}
$$
holds; see Step 2 and Step 3 in the proof of Theorem \ref{theorem polyhedron}.
\end{remark}

\mysection{Proof of Theorem \ref{theorem polyhedral cone}}

The  lemma below will simplify some calculations of what follow in this section.  Recall the notation $\hat{E}:=\bigcup^{N_0}_{i=1} E_i$.
\begin{lemma}\label{21.11.17.10}\,
	
	\begin{enumerate}[label=(\roman*)]
\item For any $i\in\{1,2,\ldots,N_0\}$, 
$$d(x,E_i)\leq \big|x-|x|p_i\big|\leq 2d(x,E_i)$$
holds for all $x\in\cD$.

\item Let $\alpha_i$ be any real numbers for $i=1,\,\ldots,\,N_0$. Then there exists a constant $N=N(\cD, \alpha_1,\ldots,\alpha_{N_0})>0$ such that, for any $x\in\cD$, $r>0$, and $k$ satisfying $d(x,\hat{E})=d(x,E_k)$, the inequalities
\begin{align*}
N^{-1}\Big(\frac{d(x,E_k)\wedge r}{|x|\wedge r}\Big)^{\alpha_k}\leq \prod_{i=1}^{N_0}\Big(\frac{d(x,E_i)\wedge r}{|x|\wedge r}\Big)^{\alpha_i}\leq N\Big(\frac{d(x,E_k)\wedge r}{|x|\wedge r}\Big)^{\alpha_k}
\end{align*}
hold.
\end{enumerate}
\end{lemma}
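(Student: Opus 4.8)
The plan is to prove (i) by a direct inner-product computation and then to derive (ii) from (i) together with the separation of the finitely many vertices $p_1,\dots,p_{N_0}\in\bS^2$.

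\emph{Part (i).} Fix $x\in\cD$ and set $s:=x\cdot p_i$; since $|p_i|=1$ one has $|s|\le|x|$ by Cauchy--Schwarz, and since $0\in E_i$ one has $d(x,E_i)\le|x|$. The point $|x|p_i$ lies on $E_i$, so $d(x,E_i)\le\big|x-|x|p_i\big|$ is immediate. For the upper bound I would expand $\big|x-|x|p_i\big|^2=2|x|\big(|x|-s\big)$ and split into two cases. If $s\ge0$, the nearest point of $E_i$ to $x$ is $sp_i$, so $d(x,E_i)^2=|x|^2-s^2=(|x|-s)(|x|+s)$; the desired inequality $\big|x-|x|p_i\big|^2\le4\,d(x,E_i)^2$ then cancels the common factor $|x|-s\ge0$ and reduces to $|x|\le2(|x|+s)$, which holds since $s\ge0$. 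If $s<0$, the nearest point of $E_i$ is the origin, so $d(x,E_i)=|x|$, and $\big|x-|x|p_i\big|^2=2|x|(|x|-s)\le4|x|^2$ is just the estimate $-s\le|x|$. This proves (i).

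\emph{Part (ii).} Write $\omega:=x/|x|\in\cM$. Because $E_i$ is a ray through the origin, $d(x,E_i)=|x|\,d(\omega,E_i)$, and applying (i) to the unit vector $\omega$ gives $\tfrac12|\omega-p_i|\le d(\omega,E_i)\le|\omega-p_i|\le2$. Using the elementary inequality $\frac{a\wedge r}{b\wedge r}\ge\frac ab$ for $0\le a\le b$ (with $a=d(x,E_i)$, $b=|x|$, legitimate since $d(x,E_i)\le|x|$) together with the trivial bound $\frac{a\wedge r}{b\wedge r}\le1$, I obtain
$$d(\omega,E_i)\ \le\ \frac{d(x,E_i)\wedge r}{|x|\wedge r}\ \le\ 1\qquad(i=1,\dots,N_0).$$
Hence it suffices to bound $d(\omega,E_i)$ below by a positive constant for $i\ne k$. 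Set $\delta_0:=\tfrac14\min_{i\ne j}|p_i-p_j|>0$. If $d(\omega,E_k)\ge\delta_0$, then $d(\omega,E_i)\ge d(\omega,E_k)\ge\delta_0$ for every $i$ because $d(\omega,E_k)=d(\omega,\hat{E})$. If $d(\omega,E_k)<\delta_0$, then $|\omega-p_k|\le2d(\omega,E_k)<2\delta_0$, so for $i\ne k$ we get $|\omega-p_i|\ge|p_i-p_k|-|\omega-p_k|\ge2\delta_0$, hence $d(\omega,E_i)\ge\tfrac12|\omega-p_i|\ge\delta_0$. In either case $d(\omega,E_i)\ge\delta_0$ for all $i\ne k$.

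To finish, isolate the $k$-th factor,
$$\prod_{i=1}^{N_0}\Big(\tfrac{d(x,E_i)\wedge r}{|x|\wedge r}\Big)^{\alpha_i}=\Big(\tfrac{d(x,E_k)\wedge r}{|x|\wedge r}\Big)^{\alpha_k}\prod_{i\ne k}\Big(\tfrac{d(x,E_i)\wedge r}{|x|\wedge r}\Big)^{\alpha_i},$$
and since each base in the last product lies in $[\delta_0,1]$, that product is pinned between $\delta_0^{\sum_i|\alpha_i|}$ and $\delta_0^{-\sum_i|\alpha_i|}$; taking $N:=\delta_0^{-\sum_{i=1}^{N_0}|\alpha_i|}$, which depends only on $\cD$ and on $\alpha_1,\dots,\alpha_{N_0}$, yields both inequalities of (ii) simultaneously. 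The only step needing any care is the case analysis producing the uniform lower bound $\delta_0$ for the off-diagonal spherical distances $d(\omega,E_i)$, $i\ne k$; this rests solely on the positivity of $\min_{i\ne j}|p_i-p_j|$, and I do not anticipate a genuine obstacle beyond it.
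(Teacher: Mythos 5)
Your proposal is correct and follows essentially the same route as the paper: part (i) by minimizing $|x-tp_i|^2$ over the ray and comparing with $t=|x|$, and part (ii) by isolating the $k$-th factor and using the separation of the vertices $p_i$ together with (i) to pin the remaining factors between positive constants. The only cosmetic differences are that you handle general $r$ via the inequality $\frac{a\wedge r}{b\wedge r}\ge \frac{a}{b}$ instead of scaling to $r=1$, and you take $\delta_0$ from $\min_{i\ne j}|p_i-p_j|$ rather than $\min_{i\ne j}d(p_j,E_i)$; both work.
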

\begin{proof}
(i) The first inequality is obvious, so we only prove the second inequality.
Considering the conic structure of our domain, we notice that 
$\frac{d(x,E_i)}{|x|}=d\left(\frac{x}{|x|},E_i\right)$, and
the claimed statement is the same as
$$
\Big|\frac{x}{|x|}-p_i\Big|\leq 2\, d\left(\frac{x}{|x|},E_i\right)\,.
$$
One can easily observe this  from the picture below.
\begin{figure}[h]
	\begin{tikzpicture}
		
		\begin{scope}[scale=0.8]
		\clip (-4,-3.2) -- (8,-3.2) -- (8,3.2) -- (-4,3.2);
		\draw[fill=black] (0,0) circle (0.05);
		\node at (0,-0.4) {${\bf{0}}$};
		\draw[fill=black] (3,0) circle (0.05);
		\node at (3.5,-0.3) {$p_i$};
		
		\draw[line width=0.3mm] (0,0) circle (3);
		\draw[line width=0.3mm] (0,0) -- (6,0);
		\node at (6.3,0) {$E_i$};

		\draw[fill=black] (1.8,2.4) circle (0.05);
		\node at (2.2,2.8) {$\frac{x}{|x|}$};
		\draw (1.8,2.4) -- (3,0);
		\draw[dashed] (1.8,2.4) -- (1.8,0);
		
		\draw[fill=black] (0.8,2.891) circle (0.05);
		\draw (0.8,2.891) -- (3,0);
		\draw[dashed] (0.8,2.891) -- (0.8,0);
		
		\draw[fill=black] (-2.4,1.8) circle (0.05);
		\draw (-2.4,1.8) -- (3,0);
		\draw[dashed] (-2.4,1.8) -- (0,0);
		
		\end{scope}
	\end{tikzpicture}
\end{figure}
A rigorous proof is the following:
As $E_i=\{rp_i\,:\,r\geq 0\}$, there exists unique $c\geq 0$ such that $d(x,E_i)=|x-cp_i|$. 
If $c=0$, it follows that
$$
|x-|x|p_i|\leq |x|+|x|\cdot 1 = 2d(x,E_i)\,.
$$
When $c>0$,  the quadratic equation $f(t):=|x-tp_i|^2=t^2-2 (x\cdot p_i)t+|x|^2$ defined on $t>0$ attains the minimum at $t=c$. Hence, we have $x\cdot p_i=c$ and $|x-cp_i|^2=|x|^2-c^2$.
Consequently, we get
$$
|x-|x|p_i|^2=2|x|^2-2c|x|\leq 2|x|^2-2c^2= 2|x-cp_i|^2= 2d^2(x,E_i).
$$

(ii) Since $d(x,E_i)=r d(r^{-1}x,E_i)$ for all $i$, we only prove the case $r=1$.

We note $d(x,E_i)\leq d(x, 0)=|x|$ for any $i$ as $E_i$s share the vertex ${\mathbf{0}}(=V)$. 
Hence, we have
\begin{equation}\label{20220407-1}
\frac{d(x,E_i)\wedge 1}{|x|\wedge 1}\leq 1 \quad\text{for all}\,\,\,x\in\cD\;\text{and all }i.
\end{equation}
On the other hand, we choose a constant $\delta_0\in(0,1)$ (depending on the shape of $\cD$) such that
$$
\delta_0\leq \frac13 \min_{i,\,j\,:\,i\neq j}d(p_j,E_i).
$$
For $x\in\cD$ satisfying $d(x,\hat{E})=d(x,E_k)\geq \delta_0|x|$, we have
$$
\delta_0|x|\leq d(x,\hat{E})\leq d(x,E_i) \quad\text{for all}\;\;\; i\,.
$$
If $d(x,\hat{E})=d(x,E_k)\leq \delta_0|x|$, then by (i) and  the definition of $\delta_0$  we again have
\begin{eqnarray*}
d(x,E_i)&\geq& d\big(|x|p_k,E_i\big)-\big|x-|x|p_k\big|\\
&\geq& |x|\cdot d(p_k,E_i)-2d(x,E_k)\geq \delta_0|x|
\end{eqnarray*}
for all $i\neq k$.
Hence we have
\begin{equation}\label{20220407-2}
\delta_0\leq \frac{d(x,E_i)\wedge 1}{|x|\wedge 1}\quad\text{for all}\,\,\,x\in\cD\,,\,\,\,i\neq k.
\end{equation}
By \eqref{20220407-1} and \eqref{20220407-2}, the assertion of (ii) follows. The lemma is proved.
\end{proof}

The following lemma concerns our estimate near the flat faces of $\cD$.

\begin{lemma}\label{half space.estimate.}
For the operator $\cL$ in \eqref{our operator} conditioned by \eqref{uniform parabolicity}, there exists a constant $N=N(\nu_1,\nu_2)>0$ - depending only on $\nu_1$, $\nu_2$ - fulfilling the following: for any $t_0\in\bR$, $x_0\in\cD$, and small $r>0$ with which $B_r(x_0)$ either completely inside $\cD$ or the $\overline{B^{\cD}_r(x_0)}\cap\partial\cD$ is a subset of only one face without any intersections with the vertex, edges and other faces, if the function $u$  belongs to $\cV_2^{1,0}\big(Q^{\cD}_{r}(t_0,x_0)\big)$, and satisfies
$$
\cL u = 0\quad \text{in}\,\,\, Q^{\cD}_{r}(t_0,x_0)\quad;\quad u|_{(\bR\times\partial\cD)\cap Q_r(t_0,x_0)}=0,
$$
then 
\begin{equation}\label{half space.estimate.inequality.}
|u(t,x)|\leq N\cdot \left(\frac{d(x,\partial\cD)}{r}\wedge 1\right)\cdot\sup_{Q^{\cD}_{r}(t_0,x_0)}|u|
\end{equation}
holds for all $(t,x)\in Q^{\cD}_{r/8}(t_0,x_0)$.
\end{lemma}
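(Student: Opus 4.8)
The statement is a standard boundary-regularity-type decay estimate near a flat portion of the boundary, so the plan is to reduce to the half-space model and apply the classical linear growth (Carleson-type) estimate for parabolic equations vanishing on a flat face. First I would dispose of the interior case: if $B_r(x_0)\subset\cD$, then $d(x,\partial\cD)\geq r$ for $x\in B_{r/8}(x_0)$ (actually for $x\in B_{r/2}(x_0)$), so the factor $d(x,\partial\cD)/r\wedge 1$ equals $1$ and the inequality is trivial with $N=1$. So assume $\overline{B^{\cD}_r(x_0)}\cap\partial\cD$ lies in a single face. By Assumption \ref{ass 1}(ii) (and Remark \ref{distance to edges}), after a rigid motion $T$ — a rotation composed with a translation, under which the uniform parabolicity constants $\nu_1,\nu_2$ are preserved — we may assume $B^{\cD}_r(x_0)=B_r(x_0)\cap\bR^3_+$ and $\partial\cD\cap B_r(x_0)=\partial\bR^3_+\cap B_r(x_0)$, with the transformed coefficients still satisfying \eqref{uniform parabolicity} with the same constants; note $d(x,\partial\cD)=x_3$ in these coordinates for $x$ near $x_0$.

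Next, the heart of the matter is the half-space estimate: there is $N=N(\nu_1,\nu_2)$ such that any $u\in\cV^{1,0}_2(Q^{\bR^3_+}_r(t_0,x_0))$ with $\cL u=0$ in $Q^{\bR^3_+}_r(t_0,x_0)$ and $u=0$ on the flat part of the lateral boundary satisfies $|u(t,x)|\leq N\,(x_3/r)\sup_{Q^{\bR^3_+}_r(t_0,x_0)}|u|$ for $(t,x)\in Q^{\bR^3_+}_{r/8}(t_0,x_0)$. After the parabolic scaling $(t,x)\mapsto(t_0+r^2 t,\,x_0'+rx)$ (centering the horizontal coordinates at the projection of $x_0$ onto $\partial\bR^3_+$), this reduces to the unit-scale statement. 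One route: extend $u$ by odd reflection across $\{x_3=0\}$; since the coefficients depend only on $t$, the reflected function is an $\cV^{1,0}_2$ solution of the same equation in a full parabolic cylinder, hence bounded and Hölder continuous up to $\{x_3=0\}$ by the De Giorgi–Nash–Moser theory with constants depending only on $\nu_1,\nu_2$. Because $u$ vanishes on $\{x_3=0\}$, the oddness gives $|u(t,x)|\leq N|x_3|^{\beta}\sup|u|$ for some $\beta\in(0,1]$ on the smaller cylinder — but $\beta$ need not be $1$, so this alone is not enough.

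To upgrade the Hölder exponent to a genuine linear bound in $x_3$, I would instead use a barrier/comparison argument. Set $M:=\sup_{Q^{\bR^3_+}_r}|u|$ and compare $|u|$ with $M$ times a function of the form $c\,x_3/r$ on the region where $x_3\leq r/2$, say, plus a term handling the boundary of that region away from $\{x_3=0\}$. Concretely: on the set where $x_3$ is comparable to $r$, $u$ is trivially controlled by $M\leq (2x_3/r)M$; on the slab $\{0<x_3<r/4\}\cap Q^{\bR^3_+}_{r/2}$, the function $w(t,x):=c\,x_3 M/r$ satisfies $\cL w=0$ (linear functions of $x$ are $\cL$-harmonic, $a_{ij}$ being independent of $x$), $w\geq 0=|u|$ on $\{x_3=0\}$, and for a suitable constant $c=c(\nu_1,\nu_2)$ we have $w\geq M\geq |u|$ on the remaining parts of the parabolic boundary of the slab (the top-in-time face, where one uses the already-known Hölder/interior bound away from $\{x_3=0\}$, or simply $|u|\leq M$ and $cx_3/r\geq 1$ there if $c\geq 4$). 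The maximum principle for $\cL$ (applicable since $u$ is a bounded weak solution, continuous up to the flat boundary by the reflection step) then yields $|u(t,x)|\leq c\,(x_3/r)M$ on $Q^{\bR^3_+}_{r/8}(t_0,x_0)$; combined with the trivial bound when $x_3\geq r/8$, this gives \eqref{half space.estimate.inequality.} with $d(x,\partial\cD)=x_3$.

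The main obstacle is making this barrier argument rigorous at the level of weak $\cV^{1,0}_2$-solutions rather than classical solutions: one needs interior smoothness (standard, from the $x$-independence of the coefficients and $L_2$-theory), continuity up to the flat face (the odd-reflection trick plus \cite[Theorem III.10.1]{LSU_1968}), and a clean maximum principle on the non-cylindrical slab $\{0<x_3<r/4\}\cap Q_{r/2}$ whose parabolic boundary one must describe carefully and on which the barrier comparison must be verified on each piece. Tracking that all constants depend only on $\nu_1,\nu_2$ — not on $x_0,t_0,r$ — is routine given parabolic scaling and the rotational invariance of \eqref{uniform parabolicity}, but must be stated. An alternative to the barrier, avoiding the slab geometry, is to invoke the boundary Harnack / Carleson estimate for parabolic operators on Lipschitz (here flat) domains, which directly gives the linear-in-$x_3$ bound; I would mention this as the cleaner citation-based route but carry out the self-contained barrier proof since the paper seems to favor explicit constants.
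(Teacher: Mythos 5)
Your overall reduction (rigid motion to a half--space model, parabolic scaling, trivial interior case) is fine, but the route you take through the half--space estimate has two genuine gaps, and it is in any case quite different from the paper's argument. The paper does not reflect or build barriers at all: it picks the nearest boundary point $\xi_0$ to $x_0$, notes that $B^{\cD}_{3r/8}(\xi_0)$ is a half--ball, writes $|u(t,x)|/d(x,\partial\cD)=|u(t,x)-u(t,\xi(x))|/|x-\xi(x)|\leq \sup|\nabla u|$ by the mean value theorem, and then invokes the boundary gradient estimate $\sup_{Q^{\cD}_{3r/8}(t_0,\xi_0)}|\nabla u|\leq N r^{-1}\sup_{Q^{\cD}_{3r/4}(t_0,\xi_0)}|u|$ from \cite[Proposition 3.1(ii)]{Kozlov_Nazarov_2009}, which is exactly the Lipschitz--up--to--the--flat--boundary regularity that your barrier is trying to reproduce by hand.

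The first gap is the odd reflection. For $\cL=\partial_t-\sum a_{ij}(t)D_{ij}$ the reflected function $\tilde u(t,x',x_3):=-u(t,x',-x_3)$ satisfies $D_{i3}\tilde u(t,x',x_3)=+D_{i3}u(t,x',-x_3)$ for $i=1,2$, with the opposite sign from all the other second derivatives, so $\tilde u$ solves the same equation only when $a_{13}=a_{23}=0$; after rotating a generic face onto $\{x_3=0\}$ this fails, and since the coefficients depend on $t$ you cannot remove the mixed terms by a single linear change of variables. Continuity up to the flat face must therefore come from elsewhere (e.g.\ \cite[Theorem III.10.1]{LSU_1968} via the (A)-condition, as the paper does in Proposition \ref{2501181139}). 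The second and more serious gap is the barrier itself: the comparison function $w=c\,x_3M/r$ dominates $|u|$ on $\{x_3=0\}$ and on $\{x_3=r/4\}$, but \emph{not} on the lateral piece $\{|x-x_0|=r/2,\ 0<x_3<r/4\}$ nor on the initial time slice of the slab, where $x_3$ can be arbitrarily small while all you know is $|u|\leq M$ (or at best $|u|\leq NM(x_3/r)^{\beta}$ with $\beta<1$, and $(x_3/r)^{\beta}\not\lesssim x_3/r$ as $x_3\to0$). A pure linear barrier cannot work here; one needs a corrective term, quadratic in $|x'-x_0'|$ and in $t_0-t$ and chosen using the parabolicity constants so that $\cL$ of the correction has a sign, which you allude to but never construct. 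Unless you supply that barrier (or an oscillation--decay iteration), the cleanest fix is simply to cite the boundary Lipschitz estimate of Kozlov--Nazarov as the paper does. (Minor point: in the interior case $B_r(x_0)\subset\cD$ only gives $d(x,\partial\cD)\geq 7r/8$ on $B_{r/8}(x_0)$, not $\geq r$; the weight is bounded below but does not equal $1$.)
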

\begin{proof}
If $d(x_0,\partial\cD)\geq r/4$, then $d(x,\partial\cD)\ge r/8$ for any $x\in B^{\cD}_{r/8}(x_0)$ as 
$$
|x-y|\ge |x_0-y|-|x-x_0|\ge \frac r4-\frac r8
$$
for any $y\in\partial\cD$. Hence, \eqref{half space.estimate.inequality.} holds with $N=8$ for instance.

If $d(x_0,\partial\cD)< r/4$, then $B_r(x_0)$ intersects $\partial D$. We choose $\xi_0\in\partial\cD$ such that $|\xi_0-x_0|=d(x_0,\partial\cD)$. Then we have
$$
B^{\cD}_{r/8}(x_0)\subset
B^{\cD}_{3r/8}(\xi_0)\subset
B^{\cD}_{r}(x_0)
$$
and $B^{\cD}_{3r/8}(\xi_0)$ is an half open ball inside $\cD$  due to our assumption on $B_r(x_0)$. Now, for any $x\in B^{\cD}_{r/8}(x_0)$, the point $\xi(x)\in \partial \cD$ satisfying $|x-\xi(x)|=d(x,\partial \cD)$ is on $B_{3r/8}(\xi_0)\cap\partial \cD$, and the straight line from $x$ to $\xi(x)$ (except $\xi(x)$) is in $B^{\cD}_{3r/8}(\xi_0)$. Hence, by mean value theorem, for any $(t,x)\in Q^{\cD}_{r/8}(t_0,x_0)\subset Q^{\cD}_{3r/8}(t_0,\xi_0)$ we have
$$
\frac{|u(t,x)|}{d(x,\partial\cD)}=\frac{|u(t,x)-u(t,\xi(x))|}{|x-\xi(x)|}
\leq \sup_{Q_{3r/8}^{\cD}(t_0,\xi_0)}|\nabla u|.
$$
Meanwhile, as $u|_{(\bR\times \partial\cD)\cap Q_r(t_0,x_0)}=0$, by the result like \cite[Proposition 3.1.(ii)]{Kozlov_Nazarov_2009}  there is a constant $N=N(\nu_1,\nu_2)>0$ such that
$$
\sup_{Q_{3r/8}^{\cD}(t_0,\xi_0)}|\nabla u|\leq N\cdot \frac{1}{r}\sup_{Q_{3r/4}^{\cD}(t_0,\xi_0)}|u|.
$$
Hence, 
\begin{equation*}
\frac{|u(t,x)|}{d(x,\partial\cD)}\leq N\cdot \frac{1}{r}\sup_{Q^{\cD}_{3r/4}(t_0,\xi_0)}|u|
\leq N\cdot \frac{1}{r}\sup_{Q^{\cD}_{r}(t_0,x_0)}|u|
\end{equation*}
holds for any $(t,x)\in Q^{\cD}_{r/8}(t_0,x_0)$. This $N$ makes  \eqref{half space.estimate.inequality.} hold.
The lemma is proved.
\end{proof}

The following lemma concerns our estimate near the edges $E_i$ of $\cD$. Recall the critical exponents $\hat{\lambda}^{\pm}_{e,i}(\cD,\cL)$  defined  in  Definition \ref{def.critical lambda.edges.}.
 \begin{lemma}\label{estimate.edges.}\,
 	
\begin{enumerate}[label=(\roman*)]
\item 
For each  $i=1,2, \ldots, N_0$ and any $\lambda\in\big(0,\hat{\lambda}^{+}_{e,i}(\cD,\cL)\big)$ there exists a constant $N=N(\kappa_i,\cD,\cL,\lambda)>0$ fulfilling the following: for any $t_0\in\bR$, $x_0\in\cD$, and $r>0$ with which $E_i$ is the closest edge from $x_0$ and $B_r(x_0)$ does not intersect the vertex and 
all edges except the edge $E_i$,
if the function $u$ belongs to $\cV_2^{1,0}\big(Q^{\cD}_{r}(t_0,x_0)\big)$ and  satisfies
$$
\cL u = 0\quad \text{on}\quad Q^{\cD}_{r}(t_0,x_0)\quad;\quad u|_{(\bR\times \partial\cD)\cap Q_r(t_0,x_0)}=0,
$$
then 
the estimate
\begin{equation*}
|u(t,x)|\nonumber\\
\leq N\left(\frac{d(x,E_i)}{r}\wedge 1\right)^{\lambda}\cdot\frac{d(x,\partial\cD)\wedge r}{d(x,E_i)\wedge r}\cdot\sup_{Q^{\cD}_{r}(t_0,x_0)}|u|
\end{equation*}
holds for all $(t,x)\in Q^{\cD}_{r/32}(t_0,x_0)$.

\item The assertion of (i) also holds with $\tilde{\cL}$ and $ \lambda\in\big(0,\hat{\lambda}^{-}_{e,i}(\cD,\cL)\big)$ in place of $\cL$ and $\lambda\in\big(0,\hat{\lambda}^{+}_{e,i}(\cD,\cL)\big)$ respectively.
\end{enumerate}
\end{lemma}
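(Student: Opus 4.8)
The plan is to chain together the two ingredients already available. The definition of $\hat\lambda^+_{e,i}(\cD,\cL)$, in the intrinsic form of Remark~\ref{edge.def.2.} (valid for $\lambda\in(0,\hat\lambda^+_{e,i}(\cD,\cL))$), supplies the edge-decay factor $\big(d(x,E_i)/r\wedge1\big)^\lambda$ but says nothing about the boundary, whereas Lemma~\ref{half space.estimate.} supplies the extra gain $d(x,\partial\cD)/(\text{scale})$ over any scale on which $\partial\cD$ is seen as a single flat face. Since $d(x,\partial\cD)\le d(x,E_i)$, the idea is to use the edge estimate over the range of scales between $d(x,E_i)$ and $r$, and the flat-face estimate over the range between $d(x,\partial\cD)$ and $d(x,E_i)$; two regimes must be distinguished according to whether $d(x,E_i)$ is comparable to $r$ or much smaller than $r$. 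I only treat (i); part (ii) is the verbatim repetition with $\cL$ and $\hat\lambda^+_{e,i}$ replaced by $\tilde\cL$ and $\hat\lambda^-_{e,i}$.

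Fix $(t,x)\in Q^{\cD}_{r/32}(t_0,x_0)$ and write $\rho:=d(x,E_i)$ and $\delta:=d(x,\partial\cD)\le\rho$. \emph{Regime 1: $\rho\ge r/8$.} Here $\big(d(x,E_i)/r\wedge1\big)^\lambda$ and $d(x,E_i)\wedge r$ are each trapped between a fixed constant and $r$, so the target reduces to $|u(t,x)|\le N\big(\delta/r\wedge1\big)\sup_{Q^{\cD}_r(t_0,x_0)}|u|$; if $\delta\ge r/16$ this is trivial, and otherwise I would apply Lemma~\ref{half space.estimate.} on $Q^{\cD}_{c_1 r}(t,x)$ — with $c_1=c_1(\kappa_i,\cD)>0$ small enough that this cylinder lies inside $Q^{\cD}_r(t_0,x_0)$ and meets $\partial\cD$ within a single face — and read the inequality off at the top point $(t,x)$.

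\emph{Regime 2: $\rho<r/8$.} Let $\bar x$ be the point of $E_i$ closest to $x$, so $|x-\bar x|=\rho$ and $B_{r/2}(\bar x)\subset B_r(x_0)$ still avoids the vertex and the edges other than $E_i$. First I would apply the edge estimate of Remark~\ref{edge.def.2.} with centre $\bar x$ and radius $r/2$: for every $(t',x')\in Q^{\cD}_{r/4}(t_0,\bar x)$,
\[
|u(t',x')|\ \le\ N\big(d(x',E_i)/(r/2)\wedge1\big)^{\lambda}\sup_{Q^{\cD}_r(t_0,x_0)}|u|.
\]
Restricting to $Q^{\cD}_{c_2\rho}(t,x)\subset Q^{\cD}_{r/4}(t_0,\bar x)$, where $d(x',E_i)\le(1+c_2)\rho<2\rho$ for $x'$ there, this gives $\sup_{Q^{\cD}_{c_2\rho}(t,x)}|u|\le N(\rho/r)^{\lambda}\sup_{Q^{\cD}_r(t_0,x_0)}|u|$. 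Then I would apply Lemma~\ref{half space.estimate.} on $Q^{\cD}_{c_2\rho}(t,x)$ — with $c_2=c_2(\kappa_i,\cD)\in(0,1)$ small enough that $c_2\rho<\rho=d(x,E_i)$ and that the cylinder meets $\partial\cD$ within one face — evaluated at $(t,x)$, and use $\delta\le\rho$ to obtain
\[
|u(t,x)|\ \le\ N\Big(\frac{\delta}{c_2\rho}\wedge1\Big)\sup_{Q^{\cD}_{c_2\rho}(t,x)}|u|\ \le\ N\,\frac{\delta}{\rho}\Big(\frac{\rho}{r}\Big)^{\lambda}\sup_{Q^{\cD}_r(t_0,x_0)}|u|.
\]
Since $\delta,\rho<r$ in this regime, $\delta/\rho=\dfrac{d(x,\partial\cD)\wedge r}{d(x,E_i)\wedge r}$ and $(\rho/r)^{\lambda}=\big(d(x,E_i)/r\wedge1\big)^{\lambda}$, which is exactly the asserted estimate.

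The main obstacle, apart from the routine task of keeping all the parabolic cylinders ($Q^{\cD}_{c_1 r}(t,x)$, $Q^{\cD}_{r/2}(t_0,\bar x)$, $Q^{\cD}_{c_2\rho}(t,x)$) correctly nested inside $Q^{\cD}_r(t_0,x_0)$ so that the hypotheses of the edge estimate and of Lemma~\ref{half space.estimate.} are met, is the geometric fact used twice above: that at a scale comparable to $d(x,E_i)$ around $x$ the boundary $\partial\cD$ reduces to a single flat face. This is exactly where Assumption~\ref{ass 1} must be invoked with some care — the wedge structure near the edge (angle $\kappa_i\ne\pi$) forces the second of the two faces adjacent to $E_i$ to lie at distance $\ge c(\kappa_i)\,d(x,E_i)$ from $x$, and the half-space structure of Assumption~\ref{ass 1}(ii) handles the case in which the nearest face of $\cD$ is not adjacent to $E_i$. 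Everything else is elementary manipulation of $d(x,V)$, $d(x,E_i)$, $d(x,\partial\cD)$ and $r$.
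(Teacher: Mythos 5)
Your proposal is correct and follows essentially the same route as the paper: both combine the critical-exponent edge estimate (re-centered at the nearest point of $E_i$) with Lemma \ref{half space.estimate.} applied on a cylinder of radius comparable to $d(x,E_i)\wedge r$, the single-face hypothesis for that cylinder being supplied by Remark \ref{distance to edges} and the constant $r_1$ of Assumption \ref{ass 1}(ii). The only difference is organizational (you split on $d(x,E_i)$ versus $r$ first, while the paper proves the edge-only bound uniformly in Step 1 and then splits on $d(x,\partial\cD)$), and the geometric verification you defer to the final paragraph is exactly the one the paper carries out via the nearest boundary point $\xi(x)$.
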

\begin{proof}
Since $\hat{\lambda}_{e,i}^-(\cD,\cL)=\hat{\lambda}_{e,i}^+(\cD,\tilde{\cL})$, we only prove (i).

\textbf{Step 1.} We first prove that there exists a constant $N(\kappa_i,\cD,\cL,\lambda)>0$ such that the estimate
\begin{equation}
 \label{22.03.10.1}
|u(t,x)|\leq N\cdot \Big(\frac{d\big(x,E_i\big)}{r}\wedge 1\Big)^{\lambda}\cdot\sup_{Q^{\cD}_{r}(t_0,x_0)}|u|
\end{equation}
holds for all $(t,x)\in Q^{\cD}_{r/16}(t_0,x_0)$  provided that $(t_0,x_0)\in \bR\times \cD$, $r>0$, and $u$ satisfies the assumptions prescribed in (i).

For any such $(t_0,x_0)$, $r>0$, and $u$, 
if $d(x_0,E_i)\geq \frac{r}{8}$, then we have
$$
d(x,E_i)\ge 
d(x_0,E_i)-|x-x_0|\ge \frac{r}{16},\quad
\frac{d(x,E_i)}{r}\wedge 1\geq \frac{1}{16},
$$
$$
|u(t,x)|\le 16^{\lambda}\Big(\frac{d\big(x,E_i\big)}{r}\wedge 1\Big)^{\lambda}\cdot \sup_{Q^{\cD}_{r}(t_0,x_0)}|u|
$$
for any $(t,x)\in Q^{\cD}_{r/16}(t_0,x_0)$.  

In case $d(x_0,E_i)< \frac{r}{8}$,
choose $\xi_0\in E_i$ such that $|x_0-\xi_0|=d(x_0,E_i)$. Then
$$
Q^{\cD}_{r/16}(t_0,x_0)\subset Q^{\cD}_{r/4}(t_0,\xi_0),\quad Q^{\cD}_{r/2}(t_0,\xi_0)\subset Q^{\cD}_{r}(t_0,x_0),
$$
and $B_{r/2}(\xi_0)$ does not intersect  the vertex and all edges except the edge $E_i$ as $B_r(x_0)$ does not.
Also, the function $u$ belongs to $\cV_2^{1,0}\big(Q^{\cD}_{r/2}(t_0,\xi_0)\big)$ and  satisfies
$$
\cL u = 0\quad \text{on}\quad Q^{\cD}_{r/2}(t_0,\xi_0)\quad;\quad u|_{(\bR\times\partial\cD)\cap Q_{r/2}(t_0,\xi_0)}=0.
$$
Hence, by the definition of $\hat{\lambda}^{+}_{e,i}(\cD,\cL)$, we have
$$
|u(t,x)|\leq N\left(\frac{d\big(x,E_i\big)}{r}\right)^{\lambda}\cdot \sup_{Q^{\cD}_{r/2}(t_0,\xi_0)}|u|
$$ for any $(t,x)\in Q^{\cD}_{r/16}(t_0,x_0)$.  This certainly yields \eqref{22.03.10.1}.
Note that $d(x,E_i)\le d(x_0,E_i)+|x-x_0|<r/8+r/16<r$ in this case.

\textbf{Step 2.} Recall the number  $r_1\in(0,1)$  in Assumption \ref{ass 1}. 
For $(t,x)\in Q^{\cD}_{r/32}(t_0,x_0)$, consider the following two cases.

\textbf{Case 1)} $d(x,\partial\cD)\geq \frac{r_1}{32}(d(x,E_i)\wedge r)$.

In this case, we have
$$
\frac{d(x,\partial\cD)\wedge r}{d(x,E_i)\wedge r}\geq \frac{r_1}{32},
$$
and by \eqref{22.03.10.1}, our claim of the lemma follows.

\textbf{Case 2)} $d(x,\partial\cD)<\frac{r_1}{32}(d(x,E_i)\wedge r)=: R'$.

In this case, we utilize $\xi(x)\in\partial \cD$ satisfying $|x-\xi(x)|=d(x,\partial\cD)$.  
This $\xi(x)$ satisfies $\xi(x)\not\in \hat{E}$. 
Indeed, if $d(x,\partial\cD) =d(x,E_i)$, then this contradicts $d(x,\partial\cD)< \frac{r_1}{32}(d(x,E_i)\wedge r)$.
If $\xi(x)$ is on an edge $E_j$, then we have
$$
d(x_0,E_j)\le |x_0-\xi(x)|\le |x-x_0|+|x-\xi(x)|<\frac{r}{32}+\frac{r}{32}<r,
$$
meaning that $B_r(x_0)$ also meet $E_j$  and this also contradicts our assumption.

Hence, $\xi(x)$ should be on a face of $\cD$. Then, by Remark \ref{distance to edges},  $B^{\cD}_{r_1d_{\xi(x)}}(\xi(x))$ is an open half ball, where $d_{\xi(x)}=d(\xi(x),\hat{E})$. We claim that
\begin{equation}\label{half ball inside}
8R'<r_1d_{\xi(x)}.
\end{equation}

It is possible that $d_{\xi(x)}$ is not the same as $d(\xi(x),E_i)$. However, we have
\begin{equation}\label{closest edge}
d(\xi(x),E_i)\le 2d_{\xi(x)}.
 \end{equation}
 Otherwise, 
$$
\frac12 d(\xi(x),E_i)>d(\xi(x),\hat{E})
$$
and $d(\xi(x),\hat{E})=d(\xi(x),E_j)$ for some $j\ne i$ and $d(\xi(x),E_j)=|x-t_jp_j|$ for some $t_j>0$. Then we have
\begin{eqnarray*}
|x_0-t_jp_j|&\le& |x-x_0|+|x-\xi(x)|+|\xi(x)-t_jp_j|\\
&<& |x-x_0|+|x-\xi(x)|+\frac12 d(\xi(x),E_i)\\
&\le&  |x-x_0|+|x-\xi(x)|+\frac12\big(|\xi(x)-x|+|x-x_0|+d(x_0,E_i)\big)\\
&<&\frac32\Big(\frac{r}{16}+\frac{r}{16}\Big)+\frac12 r<r,
\end{eqnarray*}
meaning that $B_r(x_0)$ meets $E_j$ and this contradicts our assumption. 

On the other hand, as
$$
d(x,E_i)\leq d(\xi(x),E_i)+|x-\xi(x)|\leq d(\xi(x),E_i)+\frac{1}{32}d(x,E_i),
$$
we have $d(x,E_i) \leq \frac{32}{31}d(\xi(x),E_i)$.

Hence, we have $d(x,E_i)\le \frac{64}{31}d_{\xi(x)}$ by \eqref{closest edge} and
$$
8R'\le \frac{r_1}{4} d(x,E_i)\le \frac{16}{31}r_1d_{\xi(x)}<r_1d_{\xi(x)}.
$$
 This gives \eqref{half ball inside}, and as a consequence $B^{\cD}_{\rho}(\xi(x))$ is also an open half ball for any $0<\rho<8R'$. As $B^{\cD}_{R'}(x)\subset B^{\cD}_{2R'}(\xi(x))$ due to
$$
|y-x|<R'  \quad  \Rightarrow\quad |y-\xi(x)|\le |y-x|+|x-\xi(x)|<R'+R',
$$
we note that $B_{R'}(x)$ is either  completely inside $\cD$ or $\overline{B^{\cD}_{R'}(x)}\cap\partial\cD$ is a subset of one face which also contains $\overline{B^{\cD}_{2R'}(\xi(x))}\cap\partial\cD$, without any intersections with the vertex, edges and other faces.  Now, we note $B_{R'}(x)\subset B_{r/16}(x_0)$ as $x\in B_{r/32}(x_0)$ and
$$
|y-x|<R'\quad \Rightarrow \quad |y-x_0|\le |y-x|+|x-x_0|<R'+\frac{r}{32}<\frac{r}{16}.
$$
Also, we have  $(t-(R')^2,t]\subset (t_0-(r/32)^2,t_0]$ as $t\in (t_0-(r/32)^2,t_0]$ and
$$
t-(R')^2>t-(r/32)^2>t_0-2(r/32)^2>t_0-(r/16)^2.
$$
Hence, $Q^{\cD}_{R'}(t,x)\subset Q^{\cD}_{r/16}(t_0,x_0)$ and the function $u$  belongs to $\cV_2^{1,0}\big(Q^{\cD}_{R'}(t,x)\big)$ and satisfies
$$
\cL u = 0\quad \text{in}\,\,\, Q^{\cD}_{R'}(t,x)\quad;\quad u|_{(\bR\times\partial\cD)\cap Q_{R'}(t,x)}=0.
$$
Therefore we can apply Lemma \ref{half space.estimate.} and have the estimate
\begin{align}\label{22.03.16.1}
|u(t,x)|\leq N\,\frac{d(x,\partial\cD)}{R'}\sup_{Q^{\cD}_{R'}(t,x)}|u|.
\end{align}
Finally, by \eqref{22.03.10.1}  we have
$$
\sup_{Q_{R'}(t,x)}|u| \le N \Big(\frac{d(x,E_i)}{r}\wedge 1\Big)^{\lambda}\sup_{Q_{r}(t_0,x_0)}|u|
$$
and by \eqref{22.03.16.1}
$$
|u(t,x)|\leq N\Big(\frac{d(x,E_i)}{r}\wedge 1\Big)^{\lambda}\cdot\frac{d(x,\partial\cD)\wedge r}{d(x,E_i)\wedge r}\sup_{Q^{\cD}_{r}(t_0,x_0)}|u|
$$
as $d(x,\partial\cD)<R'<r$. 

Combining the results of two cases, we get (i).  The lemma is proved.
\end{proof}

\begin{lemma}\label{21.11.02.2}
Let  $\cL$  and $\tilde{\cL}$ be operators defined  in \eqref{our operator} and  \eqref{another operator} respectively.

\begin{enumerate}[label=(\roman*)]

\item  For any $\lambda_o\in \big(0,\hat{\lambda}_o^+(\cD,\cL)\big)$ and $\lambda_{e,i}\in\big(0,\hat{\lambda}_{e,i}^+(\cD,\cL)\big)$, $i=1,2,\ldots,N_0$, there exists $N=N(\cD,\cL,\lambda_o,\lambda_{e,1},\ldots,\lambda_{e,N_0})>0$ fulfilling the following: for any $(t_0,x_0)\in \bR\times \cD$ and $r>0$, if $u$ belongs to $\cV_2^{1,0}\big(Q^{\cD}_{r}(t_0,x_0)\big)$ and satisfies
$$
\cL u = 0\quad \text{on}\quad Q^{\cD}_{r}(t_0,x_0)\quad;\quad u|_{(\bR\times \partial\cD)\cap Q_r(t_0,x_0)}=0,
$$
then the estimate
\begin{equation}\label{eqn1214123}
|u(t,x)|\leq N\cdot {\bf{I}}(x,r;\Lambda)\cdot \sup_{Q^{\cD}_{r}(t_0,x_0)}|u|
\end{equation}
holds for all $(t,x)\in Q^{\cD}_{r/64}(t_0,x_0)$,
where $\Lambda=(\lambda_o,\lambda_{e,1},\ldots,\lambda_{e,N_0})$ and
\begin{equation*}
{\bf{I}}(x,r;\Lambda)=\left(\frac{d(x,V)\wedge r}{r}\right)^{\lambda_0}\left(\prod_{i=1}^{N_0}\left(\frac{d\big(x,E_i\big)\wedge r}{d\big(x,V\big)\wedge r}\right)^{\lambda_{e,i}}\right)\cdot \frac{d(x,\partial\cD)\wedge r}{d(x,\hat{E})\wedge r}.
\end{equation*}

\item
 If  $\lambda_o\in \big(0,\hat{\lambda}_o^-(\cD,\cL)\big)$ and $\lambda_{e,i}\in\big(0,\hat{\lambda}_c^-(\cD,\cL)\big)$ for $i=1,2,\ldots,N_0$, then
 same statement of (i) holds with $\tilde{\cL}$ in place of $\cL$.
\end{enumerate}
\end{lemma}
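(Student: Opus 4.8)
The plan is a two-step localization. First I reduce the weight: since $V=\mathbf{0}$, Lemma~\ref{21.11.17.10}(ii) shows that for the edge $E_k$ realizing $d(x,\hat E)=d(x,E_k)$ the product over all edges in ${\bf{I}}(x,r;\Lambda)$ is comparable to its single $E_k$-term, so, writing $d_V=d(x,V)=|x|$, $d_E=d(x,E_k)$, $d_\partial=d(x,\partial\cD)$ (hence $d_\partial\le d_E\le d_V$), it suffices to bound $|u(t,x)|$ by
$$
N\Big(\tfrac{d_V\wedge r}{r}\Big)^{\lambda_o}\Big(\tfrac{d_E\wedge r}{d_V\wedge r}\Big)^{\lambda_{e,k}}\tfrac{d_\partial\wedge r}{d_E\wedge r}\,\sup_{Q_r^{\cD}(t_0,x_0)}|u|
$$
for $(t,x)\in Q_{r/64}^{\cD}(t_0,x_0)$. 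Note that Lemma~\ref{estimate.edges.}(i) already packages the edge and face factors of this product together, so only the vertex factor calls for a separate step. Throughout, restricting $u$ to a sub-cylinder preserves membership in $\cV_2^{1,0}$, the equation $\cL u=0$, and the zero boundary condition, so the cited local estimates apply unchanged; part (ii) is identical after replacing $\cL$ by $\tilde{\cL}$ and invoking $\hat{\lambda}_o^-(\cD,\cL)=\hat{\lambda}_o^+(\cD,\tilde{\cL})$, $\hat{\lambda}_{e,i}^-(\cD,\cL)=\hat{\lambda}_{e,i}^+(\cD,\tilde{\cL})$.

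\textbf{Step 1 (vertex reduction).} Fix small constants $c_\ast\ll c_1$ depending only on $\cD$. If $d_V\ge c_1 r$, then $(\tfrac{d_V\wedge r}{r})^{\lambda_o}\simeq 1$; set $\rho:=c_\ast(d_V\wedge r)$ and note $\sup_{Q_\rho^{\cD}(t,x)}|u|\le\sup_{Q_r^{\cD}(t_0,x_0)}|u|$. If $d_V<c_1 r$, then $|x_0-V|<c_1 r+\tfrac{r}{64}$ is small enough that $Q_{r/2}^{\cD}(t_0,V)\subset Q_r^{\cD}(t_0,x_0)$ and contains $(t,x)$; applying the defining property of $\hat{\lambda}_o^+(\cD,\cL)$ on $Q_{r/2}^{\cD}(t_0,V)$ (legitimate since $\lambda_o<\hat{\lambda}_o^+(\cD,\cL)$) at every point of $Q_\rho^{\cD}(t,x)\subset Q_{r/4}^{\cD}(t_0,V)$, where now $\rho:=c_\ast d_V$, gives $\sup_{Q_\rho^{\cD}(t,x)}|u|\le N(d_V/r)^{\lambda_o}\sup_{Q_r^{\cD}(t_0,x_0)}|u|$. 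In both cases we obtain a cylinder $Q_\rho^{\cD}(t,x)\subset Q_r^{\cD}(t_0,x_0)$ with $\rho\simeq d_V\wedge r$, $B_\rho(x)\cap\{V\}=\emptyset$, and $\sup_{Q_\rho^{\cD}(t,x)}|u|\le N\big(\tfrac{d_V\wedge r}{r}\big)^{\lambda_o}\sup_{Q_r^{\cD}(t_0,x_0)}|u|$, so it remains to prove $|u(t,x)|\le N\big(\tfrac{d_E\wedge\rho}{\rho}\big)^{\lambda_{e,k}}\tfrac{d_\partial\wedge\rho}{d_E\wedge\rho}\sup_{Q_\rho^{\cD}(t,x)}|u|$ (equivalent, via $\rho\simeq d_V\wedge r$, to the display above).

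\textbf{Step 2 (edge and face).} Choose $\delta_0=\delta_0(\cD)\in(0,1)$ small enough — using Lemma~\ref{21.11.17.10}(i), which yields $d(x,E_j)\ge|x|\,d(p_k,E_j)-2d_E$ for $j\ne k$, together with $\inf_{i\ne j}d(p_i,E_j)>0$ — that $d(x,E_j)>\delta_0 d_V$ for all $j\ne k$ whenever $d_E<\delta_0 d_V$. Put $R:=\tfrac12\delta_0 d_V\wedge\rho$, so $R\le\rho$, the ratio $\rho/R$ is bounded in terms of $\cD$, and $B_R(x)$ avoids $V$ (as $R<d_V$) and every edge except possibly $E_k$: indeed $R<\delta_0 d_V<d(x,E_j)$ in the regime $d_E<\delta_0 d_V$, while $R\le\tfrac12\delta_0 d_V\le\tfrac12 d_E<d(x,E_j)$ in the regime $d_E\ge\delta_0 d_V$; and $E_k$ is the edge closest to $x$. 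Hence Lemma~\ref{estimate.edges.}(i) applies on $Q_R^{\cD}(t,x)\subset Q_\rho^{\cD}(t,x)$ (legitimate since $\lambda_{e,k}<\hat{\lambda}_{e,k}^+(\cD,\cL)$), and at $(t,x)\in Q_{R/32}^{\cD}(t,x)$,
$$
|u(t,x)|\le N\Big(\tfrac{d_E}{R}\wedge 1\Big)^{\lambda_{e,k}}\tfrac{d_\partial\wedge R}{d_E\wedge R}\sup_{Q_R^{\cD}(t,x)}|u|.
$$
Since $R\simeq\rho$, a routine comparison of the quantity $\big(\tfrac{d_E\wedge z}{z}\big)^{\lambda_{e,k}}\tfrac{d_\partial\wedge z}{d_E\wedge z}$ for $z=R$ and $z=\rho$ bounds the right-hand side by $N\big(\tfrac{d_E\wedge\rho}{\rho}\big)^{\lambda_{e,k}}\tfrac{d_\partial\wedge\rho}{d_E\wedge\rho}\sup_{Q_\rho^{\cD}(t,x)}|u|$. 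Combining with Step 1 proves (i); the fraction $\tfrac1{64}$ is dictated by the nesting $Q_R^{\cD}(t,x)\subset Q_\rho^{\cD}(t,x)\subset Q_r^{\cD}(t_0,x_0)$ together with the cylinders through $V$.

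\textbf{Main obstacle.} Neither analytic step is hard in itself; the work is the geometry-plus-bookkeeping that makes the two steps match ${\bf{I}}(x,r;\Lambda)$ on the nose: (a) fixing $c_\ast,c_1,\delta_0$ as functions of $\cD$ and verifying, via Lemma~\ref{21.11.17.10} and compactness of $\bS^2$, that $B_\rho(x)$ and $B_R(x)$ really see only the vertex, resp. at most one edge — this is the sole place the conic structure of $\cD$ enters; (b) checking the nested cylinder inclusions, which force the explicit fractions; and (c) the repeated comparisons of the truncated ratios $\tfrac{d(x,\cdot)\wedge z}{\cdot}$ under bounded changes of $z$ and across the regimes $d_E<\delta_0 d_V$ versus $d_E\ge\delta_0 d_V$ and $d_V<r$ versus $d_V\ge r$ — in particular the fact that when $d_V\gg r$ the controlling length is $r$, not $d_V$, which is precisely why every factor of ${\bf{I}}$ carries the truncation at $r$.
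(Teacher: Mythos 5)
Your proposal is correct, and it follows the paper's overall strategy (vertex localization first via the defining property of $\hat{\lambda}_o^{+}$, then edge/face localization via the earlier lemmas, then Lemma \ref{21.11.17.10}(ii) to pass from the single closest edge to the full product). The one genuine difference is in the second stage: the paper's proof splits into three cases according to whether $x$ is far from both $\hat{E}$ and $\partial\cD$, near $\partial\cD$ but far from $\hat{E}$, or near an edge, invoking respectively the bare vertex estimate, Lemma \ref{half space.estimate.}, and Lemma \ref{estimate.edges.}. You instead observe that Lemma \ref{estimate.edges.}(i), applied once with the cylinder centered at $x$ itself and radius $R\simeq d(x,V)\wedge r$, already delivers \emph{both} the edge factor and the boundary factor in all regimes, because its hypothesis only requires $B_R(x)$ to avoid the vertex and the non-closest edges (which your distance estimates via Lemma \ref{21.11.17.10}(i) verify), not that $x$ actually be close to $E_k$. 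This collapses the paper's trichotomy into a single application and pushes the half-space lemma one level down (it is still used, but only inside the proof of Lemma \ref{estimate.edges.}); the price is that the bookkeeping of the paper's explicit radii $\frac{r_0}{64}(|x|\wedge r)$, $\frac{r_1}{32}(\cdot)$ is replaced by your generic constants $c_\ast,c_1,\delta_0$ and the routine comparisons of truncated ratios under bounded changes of the truncation scale, all of which are justified by \eqref{22.03.16.5} and the fact that $d(x,\partial\cD)\le d(x,E_k)\le d(x,V)$ makes $a\wedge(d_V\wedge r)=a\wedge r$ for $a\in\{d_\partial,d_E\}$.
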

\begin{proof} Again, we only prove (i).

\textbf{Step 1.} We claim that there exists a constant $N(\cD,\cL,\lambda_o)>0$ such that the estimate
$$
|u(t,x)|\leq N\cdot \Big(\frac{d\big(x,V\big)}{r}\wedge 1\Big)^{\lambda_o}\cdot\sup_{Q^{\cD}_{r}(t_0,x_0)}|u|
$$
holds for all $(t,x)\in Q^{\cD}_{r/16}(t_0,x_0)$ provided that  $(t_0,x_0)\in \bR\times \cD$, $r>0$, and $u$ satisfies the assumptions prescribed in (i). The poof is similar to Step 1 of the proof of Lemma \ref{estimate.edges.}.

For any such $(t_0,x_0)$, $r>0$, and $u$, 
if $|x|\geq \frac{r}{8}$, we have
$$
|x|\ge 
|x_0|-|x-x_0|\ge \frac{r}{16},\quad
\frac{|x|}{r}\wedge 1\geq \frac{1}{16},
$$
$$
|u(t,x)|\le 16^{\lambda_o}\Big(\frac{d\big(x,E_i\big)}{r}\wedge 1\Big)^{\lambda_o}\cdot \sup_{Q^{\cD}_{r}(t_0,x_0)}|u|
$$
for any $(t,x)\in Q^{\cD}_{r/16}(t_0,x_0)$.  
If $|x|< \frac{r}{8}$,  then
$Q^{\cD}_{r/16}(t_0,x_0)\subset Q^{\cD}_{r/4}(t_0,{\bf{0}})$, $Q^{\cD}_{r/2}(t_0,{\bf{0}})\subset Q^{\cD}_{r}(t_0,x_0)$. Also, $u$ belongs to $\cV_2^{1,0}\big(Q^{\cD}_{r/2}(t_0,{\bf{0}})\big)$ and satisfies
$$
\cL u = 0\quad \text{on}\quad Q^{\cD}_{r/2}(t_0,{\bf{0}})\quad;\quad u|_{(\bR\times\partial\cD)\cap Q_{r/2}(t_0,{\bf{0}})}=0.
$$
Hence, by the definition of $\hat{\lambda}^{+}_{o}(\cD,\cL)$ we have
\begin{equation}\label{estimate.vertex.}
|u(t,x)|\leq N\left(\frac{d\big(x,V\big)}{r}\right)^{\lambda_o}\cdot \sup_{Q^{\cD}_{r/2}(t_0,{\bf{0}})}|u|
\leq N\left(\frac{d\big(x_,V\big)}{r}\right)^{\lambda_o} \cdot \sup_{Q^{\cD}_{r}(t_0,x_0)}|u|
\end{equation}
for any $(t,x)\in Q^{\cD}_{r/16}(t_0,x_0)$. Note that $d(x,V)=|x|<r/8<r$.

\textbf{Step 2.} Now we prove \eqref{eqn1214123}. Recall the numbers  $r_0,r_1\in(0,1)$  we defined in  Assumption \ref{250102724}.  For $(t,x)\in Q^{\cD}_{r/64}(t_0,x_0)$, we consider the following three cases.

{\bf{Case 1)}} $d(x,\hat{E})\geq \frac{r_0}{64}(d(x,V)\wedge r)$ and $d(x,\partial\cD)\geq \frac{r_1}{32}(d(x,\hat{E})\wedge r)$.

In this case, we have
$$
\frac{r_0}{64}\leq \frac{d(x,E_i)\wedge r}{d(x,V)\wedge r}\leq 1\,\,\,\text{for all}\,\,\,i\quad\text{and}\quad\frac{r_1}{32}\leq \frac{d(x,\partial\cD)\wedge r}{d(x,\hat{E})\wedge r}\leq 1.
$$
Therefore \eqref{eqn1214123} follows from \eqref{estimate.vertex.}.

{\bf{Case 2)}} $d(x,\hat{E})\geq \frac{r_0}{64}(d(x,V)\wedge r)$ and $d(x,\partial\cD)< \frac{r_1}{32}(d(x,\hat{E})\wedge r)$.

In this case, we first have
\begin{equation}\label{to edges.}
\frac{r_0}{64}\le\frac{d(x,E_i)\wedge r}{d(x,V)\wedge r}\le 1
\end{equation}
for all $i=1,2,\ldots,N_0$.

We define $R':=\frac{r_1}{32}(d(x,\partial\cD)\wedge r)$ and choose $\xi(x)\in \partial\cD$ satisfying $|x-\xi(x)|=d(x,\partial\cD)$. Since
$$
0<d(x,\partial\cD)\leq \frac{r_1}{32}\big(d((x,\hat{E})\wedge r)\big)<d(x,\hat{E})\,,
$$
$\xi(x)$ can not be the vertex or on any edge. 
Hence, $\xi(x)$ is on a face of $\cD$. 

Note that
$$
d(x,\hat{E})\le d(\xi(x),\hat{E})+|x-\xi(x)|<d(\xi(x),\hat{E})+\frac{1}{32}d(x,\hat{E}),
$$
which yields $d(x,\hat{E})\le \frac{32}{31}d(\xi(x),\hat{E})$ and
$$
16R'\le \frac{r_1}{2}d(x,\hat{E})\le\frac{16}{31}r_1d_{\xi(x)}<r_1d_{\xi(x)}.
$$
Then by Remark \ref{distance to edges}  $B^{\cD}_{\rho}(\xi(x))$ is an open half ball for any $0<\rho<16R'$.

Since $B^{\cD}_{R'}(x)\subset B^{\cD}_{2R'}(\xi(x))$, $B_{R'}(x)$ is either  completely inside $\cD$ or $\overline{B^{\cD}_{R'}(x)}\cap\partial\cD$ is a subset of one face which also contains $\overline{B^{\cD}_{2R'}(\xi(x))}\cap\partial\cD$, without any intersections with the vertex, edges and other faces. 
Now, we note $B_{R'}(x)\subset B_{r/16}(x_0)$ as $x\in B_{r/64}(x_0)$ and
$$
|y-x|<R'\quad \Rightarrow \quad |y-x_0|\le |y-x|+|x-x_0|<R'+\frac{r}{64}<\frac{r}{16}.
$$
Also, we have  $(t-(R')^2,t]\subset (t_0-(r/16)^2,t_0]$ as $t\in (t_0-(r/64)^2,t_0]$ and
$$
t-(R')^2>t-(r/32)^2>t_0-(r/64)^2-(r/32)^2>t_0-(r/16)^2.
$$
Hence, $Q^{\cD}_{R'}(t,x)\subset Q^{\cD}_{r/16}(t_0,x_0)$ and the function $u$  belongs to $\cV_2^{1,0}\big(Q^{\cD}_{R'}(t,x)\big)$ and satisfies
$$
\cL u = 0\quad \text{in}\,\,\, Q^{\cD}_{R'}(t,x)\quad;\quad u|_{(\bR\times\partial\cD)\cap Q_{R'}(t,x)}=0.
$$
We can apply Lemma \ref{half space.estimate.} and have the estimate
\begin{align}\label{22.03.16.100}
|u(t,x)|\leq N\,\frac{d(x,\partial\cD)}{R'}\sup_{Q^{\cD}_{R'}(t,x)}|u|.
\end{align}
In return, by \eqref{estimate.vertex.} we have
$$
\sup_{Q_{R'}(t,x)}|u|\le N\Big(\frac{|x|}{r}\wedge 1\Big)^{\lambda_0}\sup_{Q_{r}(t_0,x_0)}|u|
$$
and by \eqref{22.03.16.100} 
$$
|u(t,x)|\leq N\Big(\frac{|x|}{r}\wedge 1\Big)^{\lambda_0}\cdot\frac{d(x,\partial\cD)\wedge r}{d(x,\hat{E})\wedge r}\sup_{Q_{r}(t_0,x_0)}|u|.
$$
This and  \eqref{to edges.}  prove \eqref{eqn1214123} in this case, too.

{\bf{Case 3)}} $d(x,\hat{E})<\frac{r_0}{64}(d(x,V)\wedge r)=\frac{r_0}{64}(|x|\wedge r)$.

Let us set $R''=\frac{r_0}{64}(|x|\wedge r)$ and choose $\hat{\xi}(x)\in \hat{E}$ satisfying $|x-\hat{\xi}(x)|=d(x,\hat{E})$.
Let $\hat{\xi}(x)\in E_k$; $d(x,\hat{E})=d(x,E_k)$.
As
$$
|x|\leq |\hat{\xi}(x)|+|x-\hat{\xi}(x)|\leq |\hat{\xi}(x)|+\frac{1}{64}|x|,\quad |x|\le \frac{64}{63} |\hat{\xi}(x)|,
$$
we have
$$
32R''\le \frac{r_0}{2}|x|\le r_0|\hat{\xi}(x)|\,.
$$
By Remark \ref{distance to edges}, $B^{\cD}_{r_0|\hat{\xi}(x)|}(\hat{\xi}(x))$ is a spherical wedge with inner angle $\kappa_k$, and therefore $B^{\cD}_{\rho}(\hat{\xi}(x))$ is so for any $0<\rho<32R''$.
Hence, as $B_{R''}(x)\subset B_{2R''}(\hat{\xi}(x))$, $B_{R''}(x_0)$ does not intersect the vertex and 
any edges except the edge $E_k$; note $\hat{\xi}(x)\in B_{R''}(x)$.

Moreover, we have $B_{R''}(x)\subset B_{r/32}(x_0)$ and  $(t-(R'')^2,t]\subset (t_0-(r/32)^2,t_0]$ as $t\in (t_0-(r/64)^2,t_0]$ and
$$
t-(R'')^2>t-(r/64)^2>t_0-(r/64)^2-(r/64)^2>t_0-(r/32)^2.
$$
Hence, $Q^{\cD}_{R''}(t,x)\subset Q^{\cD}_{r/32}(t_0,x_0)$ and the function $u$ belongs to $\cV_2^{1,0}\big(Q^{\cD}_{R''}(t,x)\big)$ and  satisfies
$$
\cL u = 0\quad \text{on}\quad Q^{\cD}_{R''}(t,x)\quad;\quad u|_{(\bR\times\partial\cD)\cap Q_{R''}(t,x)}=0.
$$

We can apply Lemma \ref{estimate.edges.} and have
$$
|u(t,x)|\leq N\cdot\left(\frac{d(x,E_k)}{R''}\wedge 1\right)^{\lambda_{e,k}} \frac{d(x,\partial\cD)\wedge R''}{d(x,E_k)\wedge R''}\sup_{Q^{\cD}_{R''}(t,x)}|u|.
$$
By Step 1,
$$
\sup_{Q_{R''}(t,x)}|u|\leq N \Big(\frac{|x|}{r}\wedge 1\Big)^{\lambda_o}\sup_{Q_{r}(t_0,x_0)}|u|,
$$
and therefore we have
$$
|u(t,x)|\leq N \Big(\frac{|x|}{r}\wedge 1\Big)^{\lambda_o}\cdot \Big(\frac{d(x,E_k)}{r}\wedge 1\Big)^{\lambda_{e,k}}\cdot\frac{d(x,\partial\cD)\wedge r}{d(x,\hat{E})\wedge r}\cdot\sup_{Q^{\cD}_{r}(t_0,x_0)}|u|.
$$
Now, thanks to Lemma~\ref{21.11.17.10},  \eqref{eqn1214123} also holds  in this case.

Combining the results of three cases, we get (i). The lemma is proved.
\end{proof}

\begin{remark}
\label{remark 12.14}
Let $\cD=x_0+\cD_0$ be a translation of $\cD_0$ in spatial direction. Here, $\cD_0$ is a polyhedral cone with vertex $V_0=\bf{0}$ and edges $E_{0,k}$, $k=1,2,\cdots, N_0$.  Put $V=x_0+V_0$, and $E_k=x_0+E_{0,k}$. For $x\in \cD$ we put $\tilde{x}=x-x_0$, then we have
$d(x,E_{k})=d(\tilde{x},E_{0,k})$ and $d(x,V)=d(\tilde{x},V_0)$.   Now suppose 
 $(t_0,x_0)\in \bR\times \cD$, $r>0$, and  $u$ belongs to $\cV_2^{1,0}\big(Q^{\cD}_{r}(t_0,x_0)\big)$ and satisfies
$$
\cL u = 0\quad \text{on}\quad Q^{\cD}_{r}(t_0,x_0)\quad;\quad u|_{(\bR\times \partial\cD)\cap Q_r(t_0,x_0)}=0,
$$
then, since $\cL$  is independent of $x$,  we can apply  Lemma  \ref{21.11.02.2} with $u_0(t,x):=u(t,\tilde{x})$ and $\cD_0$, and get \eqref{eqn1214123} for $\cD:=x_0+\cD_0$, $V:=x_0+V_0$, and $E_k:=x_0+E_{0,k}$. 
The similar result also holds for $\tilde{\cL}$.
\end{remark}

\textbf{Proof of Theorem \ref{theorem polyhedral cone}}

Recall that the the claim of the theorem is the following: if $\lambda_o^{\pm}\in (0, \hat{\lambda}_o^{\pm})$ and $ \lambda_i^{\pm}\in (0,\hat{\lambda}_{e,i}^{\pm})$ for $i=1,2,\cdots,N_0$, then there exist constants  $N=N\big(\cD,\cL, \Lambda^+,\Lambda^-\big)>0$ and $\sigma=\sigma(\nu_1,\nu_2)>0$ such that
 $$
G(t,s,x,y)\leq N\cdot {\bf{I}}\big(x,\sqrt{t-s};\Lambda^+\big){\bf{I}}\big(y,\sqrt{t-s};\Lambda^-\big)\,\frac{1}{(t-s)^{3/2}}e^{-\sigma\frac{|x-y|^2}{t-s}}
$$
for any $(t,s,x,y)\in \bR\times \bR\times \cD\times \cD$ with $t>s$
, where  
$
\Lambda^{\pm}=(\lambda_o^{\pm},\lambda_{e,1}^{\pm},\ldots,\lambda_{e,N_0}^{\pm})$
and
$$
{\bf{I}}(x,r;\Lambda^{\pm})=\left(\frac{d(x,V)}{r}\wedge 1\right)^{\lambda^{\pm}_0}\left(\prod_{i=1}^{N_0}\left(\frac{d\big(x,E_i\big)\wedge r}{d\big(x,V\big)\wedge r}\right)^{\lambda^{\pm}_{e,i}}\right)\cdot \frac{d(x,\partial\cD)\wedge r}{d(x,\hat{E})\wedge r}.
$$

Let us fix $(t,x), (s,y)\in  \bR \times \cD$ with $t>s$. Put $\rho=\frac{1}{2}\sqrt{t-s}$. 

\textbf{Step 1.} We note that by Lemma \ref{250219524} the function $u(t',z):=G(t',s,z,y)$ satisfies
$\cL u=0$ on $Q_{\rho}^{\cD}(t,x)$ and $u|_{(\bR\times\partial\cD)\cap Q_{\rho}(t,x)}= 0$ as $s<t-\rho^2=t-(t-s)/4$.
Hence, by Theorem ~\ref{21.11.02.2} (i) we have
$$
G(t,s,x,y)\le N {\bf{I}}(x,\rho;\Lambda^+)\sup_{(t',z)\in Q^{\cD}_{\rho}(t,x)}G(t',s,z,y).
$$

\textbf{Step 2.} Next, for each fixed $(t',z)\in Q_{\rho}(t,x)$, we define $v(s',w)=G(t',-s',z,w)$.
Then by Lemma \ref{250102534}, $v$ satisfies 
$\tilde{\cL} v=0$ on $Q_{\rho}^{\cD}(-s,y)$ and $u|_{(\bR\times\partial\cD)\cap Q_{\rho}(-s,y)}= 0$ as 
$$
s\le -s'<s+\rho^2<t-\rho^2<t'\le t,
\quad
-t'<-s-\rho^2<s'\le -s.
$$
Hence,  by Theorem ~\ref{21.11.02.2} (ii)   we have
\begin{equation*}
G(t',s,z,y)
\le N {\bf{I}}(y,\rho;\Lambda^-)\sup_{(s',w)\in Q_{\rho}(-s,y)}G(t',-s',z,w).
\end{equation*}

\textbf{Step 3.} Now, we note that for any $(t',z)\in Q_{\rho}(t,x)$ and $(s',w)\in Q_{\rho}(-s,y)$ we have
$$
|z-w|+2r\ge |x-y|,\;\; 2|z-w|^2+2(2\rho)^2\ge |x-y|^2,\;\;
|z-w|^2\ge \frac{|x-y|^2}{2}-(t-s),
$$
$$
\frac12(t-s)=t-s-2\left(\frac{\sqrt{t-s}}{2}\right)^2<t'+s'<t-s.
$$
Hence, using \eqref{250102531}, we obtain 
\begin{equation*}
G(t',-s',z,w)\le N (t'+s')^{-3/2}e^{-\sigma \frac{|z-w|^2}{t'+s'}} 
\le N (t-s)^{-3/2}e^{-\sigma' \frac{|x-y|^2}{t-s}}.
\end{equation*}

\textbf{Step 4.} Combining the results of Steps 1, 2, and 3, we have
\begin{align*}
&\quad G(t,s,x,y)\\
&\le N {\bf{I}}\big(x,\sqrt{t-s}/2;\Lambda^+\big){\bf{I}}\big(y,\sqrt{t-s}/2;\Lambda^-\big)\cdot (t-s)^{-3/2}e^{-\sigma'\frac{|x-y|^2}{t-s}}\\
&\le N {\bf{I}}\big(x,\sqrt{t-s};\Lambda^+\big){\bf{I}}\big(y,\sqrt{t-s};\Lambda^-\big)\cdot (t-s)^{-3/2}e^{-\sigma'\frac{|x-y|^2}{t-s}}
\end{align*}
with the generic constants $N$ having the claimed dependencies. The theorem is proved.
\hfill\qedsymbol{}

\mysection{Proof of Theorem \ref{theorem polyhedron}}

Recall that $V_1,\cdots, V_{M_0}$ are vertices of polyhedron $\cP$ and $\hat{V}=\{V_1,\cdots, V_{M_0}\}$. 
Also, $E_1, \cdots, E_{N_0}$ are egdes of  $\cP$ each of which is a closed interval connecting two vertices, and $\hat{E}=\bigcup_{j=1}^{N_0} E_j$.

The following lemma will be used in subsequent results to simplify some calculations. 
 
\begin{lemma}\label{22.03.16.7}
	Let $\alpha_{i},\,\beta_j\in\bR$ for $i=1,\,\ldots,\,M_0$ and $j=1,\,\ldots,\,N_0$.
	For any $R>0$, there exists a constant $N>0$, depending only on $R$, $\cP$, $(\alpha_i)_{1\leq i\leq M_0}$, and $(\beta_i)_{1\leq i\leq N_0}$  such that if  $x\in \cP$, 
	 $d(x,\hat{V})=d(x,V_k)$ and $d(x,\hat{E})=d(x,E_l)$ for some $k$ and $\ell$, then for any $r\in (0,R]$
	 	\begin{align}\label{240913451}
		\prod_{i=1}^{M_0}\left(\frac{d(x,V_i)}{r}\wedge 1\right)^{\alpha_{i}}\simeq_N \,\prod_{i:V_i\in E_l}\left(\frac{d(x,V_i)}{r}\wedge 1\right)^{\alpha_{i}}\simeq_N \left(\frac{d(x,V_k)}{r}\wedge 1\right)^{\alpha_{k}}
	\end{align}
	and
	\begin{align}\label{240913452}
		\prod_{j=1}^{N_0}\left(\frac{d(x,E_j)\wedge r}{d\big(x,\hat{V}\cap E_j\big)\wedge r}\right)^{\beta_j}\simeq_N \prod_{j:E_j\ni V_k}^{N_0}\left(\frac{d(x,E_j)\wedge r}{d\big(x,V_k\big)\wedge r}\right)^{\beta_j}\simeq_N \left(\frac{d(x,E_l)\wedge r}{d(x,V_k)\wedge r}\right)^{\beta_l}\,.
	\end{align}
	Consequently, 
	\begin{align*}
	\begin{split}
	&\left(\,\prod_{i=1}^{M_0}\left(\frac{d(x,V_i)}{r}\wedge 1\right)^{\alpha_{i}}\right) \left(\,\prod_{j=1}^{N_0}\left(\frac{d(x,E_j)\wedge r}{d\big(x,\hat{V}\cap E_j\big)\wedge r}\right)^{\beta_j}\right)\cdot \frac{d(x,\partial\cP)\wedge r}{d(x,\hat{E})\wedge r}\\
	\simeq_N\,&\left(\frac{d(x,V_k)}{r}\wedge 1\right)^{\alpha_{k}}\left(\frac{d(x,E_l)\wedge r}{d(x,V_k)\wedge r}\right)^{\beta_l}\cdot \frac{d(x,\partial\cP)\wedge r}{d(x,E_l)\wedge r}.
	\end{split}
	\end{align*}
\end{lemma}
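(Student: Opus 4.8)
The plan is to prove the two equivalence chains \eqref{240913451} and \eqref{240913452} separately and then multiply them. Throughout I use that $\cP$ is \emph{fixed}, so that several positive geometric quantities attached to $\cP$ are constants: the minimal separation between two vertices, the minimal distance from a vertex to a non-incident edge, the minimal distance between two disjoint edges, the minimal opening angle between two edges sharing a vertex, the radii $r_0,r_1,r_2$ of the cone/wedge/half-space pictures of Assumption~\ref{250102724}, and $\mathrm{diam}(\cP)$. From these I fix a small constant $\rho_0=\rho_0(\cP)>0$ and a large one $C=C(\cP)>1$. The guiding principle is that in each factor of the three products either both the numerator and denominator distances lie in $[\rho_0,\mathrm{diam}(\cP)]$, in which case that factor is $\simeq_N 1$ uniformly for $r\in(0,R]$ (here one uses that for $a,b\in[\rho_0,\mathrm{diam}(\cP)]$ and $r\le R$ one has $\tfrac{a\wedge r}{b\wedge r}\simeq_N 1$), or the factor genuinely survives and is then the one attached to the closest vertex $V_k$ or closest edge $E_l$.

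For \eqref{240913451} I split on the size of $d(x,V_k)=d(x,\hat V)$. If $d(x,V_k)\ge\rho_0$, then since $V_k$ is the closest vertex $d(x,V_i)\in[\rho_0,\mathrm{diam}(\cP)]$ for every $i$, so every factor $(d(x,V_i)/r\wedge 1)^{\alpha_i}$ is $\simeq_N 1$ and all three expressions are $\simeq_N 1$. If $d(x,V_k)<\rho_0$, I first show $V_k\in E_l$: the closest point of $\hat E$ to $x$ lies within $2\rho_0$ of $V_k$, hence, since $V_k$ is bounded away from every non-incident edge and distinct edges meet only along $\hat V$ (Assumption~\ref{250102724}(ii)), that point lies on an edge through $V_k$, which forces $E_l\ni V_k$. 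Vertex separation then gives $d(x,V_i)\gtrsim 1$ for $i\ne k$, so in both the full product and the product over $\{i:V_i\in E_l\}$ (which contains $V_k$) all factors except the one at $V_k$ are $\simeq_N 1$; this gives \eqref{240913451}.

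For \eqref{240913452} I split on the size of $d(x,E_l)=d(x,\hat E)$. If $d(x,E_l)\ge\rho_0$, then $d(x,E_j)\ge\rho_0$ and $d(x,\hat V\cap E_j)\ge d(x,E_j)\ge\rho_0$ for all $j$, and $d(x,V_k)\ge\rho_0$, so again all three expressions collapse to $\simeq_N 1$. If $d(x,E_l)<\rho_0$, let $p\in E_l$ realise $d(x,E_l)$. When $p$ is an endpoint we have $d(x,V_k)=d(x,E_l)<\rho_0$ and proceed as in the small-vertex situation; when $p$ is interior I split further on $d(x,V_k)$. If $d(x,V_k)<C\rho_0$, then $x$ sits in the cone neighbourhood $V_k+\cD_{v,k}$ (Assumption~\ref{250102724}(iii)(a)), $E_l$ and every edge meeting $x$ from nearby pass through $V_k$, one has $d(x,\hat V\cap E_j)=d(x,V_k)$ for $E_j\ni V_k$ while the factors of edges not through $V_k$ are $\simeq_N 1$; the residual product over $\{E_j\ni V_k\}$ is then exactly of the type handled by the polyhedral-cone estimate Lemma~\ref{21.11.17.10}(ii), applied after translating $V_k$ to the origin. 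If instead $d(x,V_k)\ge C\rho_0$, then the wedge picture at $p$ (Assumption~\ref{250102724}(iii)(b)) confines a ball of radius $\gtrsim d(x,V_k)$ about $x$ to a single wedge; using the disjointness and angle bounds to rule out any other edge coming near $x$, this forces $d(x,E_j)\gtrsim 1$ for all $j\ne l$, forces $d(x,V_k)\gtrsim 1$, and gives $d(x,\hat V\cap E_l)\in[d(x,V_k),\mathrm{diam}(\cP)]$, hence $d(x,\hat V\cap E_l)\simeq_N d(x,V_k)$; both nontrivial expressions then reduce to $\bigl(\tfrac{d(x,E_l)\wedge r}{d(x,V_k)\wedge r}\bigr)^{\beta_l}$.

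Multiplying \eqref{240913451} and \eqref{240913452} and appending the factor $\tfrac{d(x,\partial\cP)\wedge r}{d(x,\hat E)\wedge r}$, which already equals $\tfrac{d(x,\partial\cP)\wedge r}{d(x,E_l)\wedge r}$ since $E_l$ realises $d(x,\hat E)$, yields the ``Consequently'' statement. The hard part will be the edge analysis in the regime $d(x,E_l)<\rho_0$: one must track which vertex is closest and reconcile $d(x,V_k)$ with $d(x,\hat V\cap E_l)$, showing that when $x$ is near the interior of $E_l$ the closest vertex is either an endpoint of $E_l$ or sits at distance $\gtrsim 1$ from $x$. This is precisely where the local wedge/cone structure of Assumption~\ref{250102724} and the positive-angle bound between incident edges carry the argument; everything else is bookkeeping with the geometric constants and the $a\wedge r$ truncations.
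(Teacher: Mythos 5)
Your proposal is correct and follows essentially the same route as the paper's proof: fixed geometric separation constants of $\cP$, the elementary truncation inequality $1\wedge\frac{b'}{b}\le\frac{a\wedge b'}{a\wedge b}\le 1\vee\frac{b'}{b}$, and reduction to Lemma \ref{21.11.17.10}(ii) via the local cone structure at the nearest vertex $V_k$. The only difference is organizational: the paper splits first on the size of $d(x,\hat V)$ (which automatically absorbs your ``nearest point of $E_l$ is an endpoint'' sub-case) and only afterwards on $d(x,\hat E)$, using the truncated edges $\widetilde E_j$ to quantify edge separation, whereas you split edge-first; both orderings carry through.
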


\begin{proof}
	Note that for any $a,\,b,\,b'\in(0,\infty)$,
	\begin{align}\label{22.03.16.5}
		1\wedge \frac{b'}{b}\leq \frac{a\wedge b'}{a\wedge b}\leq 1\vee \frac{b'}{b}.
	\end{align}
	
\textbf{Step 1.} We first prove \eqref{240913451}.
	Take $\delta_1\in(0,1]$ such that
	\begin{align}\label{22.03.16.11}
		2\delta_1\leq r_0\wedge \min_{i,j:V_i\notin E_j}d(V_i,E_j)\wedge \min_{i,j:i\neq j}d(V_i,V_j)\,,
	\end{align}
	where $r_0$ is the constant in \eqref{241014218}.
Note that to prove  \eqref{240913451} we only need to consider the case of $d(x,\hat{V})\leq \delta_1$.
	Indeed, if $d(x,\hat{V})>\delta_1$, then for any $i=1,\,\ldots,\,M_0$,
	$$
	\frac{\delta_1}{R}\wedge 1\leq \frac{d(x,V_i)}{r}\wedge 1\leq 1\,,
	$$
	so that \eqref{240913451} holds. Now suppose  $d(x,\hat{V})=d(x,V_k)\leq \delta_1$.
	If $i\neq k$ then 
	$$
	d(x,V_i)\geq d(V_k,V_i)-d(x,V_k)\geq \delta_1\,,
	$$
	which implies that 
	$$
	\frac{\delta_1}{R}\wedge 1\leq \frac{d(x,V_i)}{r}\wedge 1\leq 1, 
	$$
	and this proves  \eqref{240913451}.

\textbf{Step 2.} Next we prove \eqref{240913452}.
	
	\textbf{Case 1)} $d(x,\hat{V})=d(x,V_k)\leq \delta_1$, where $\delta_1$ is the constant in \eqref{22.03.16.11}.
	
	If $E_j\not\ni V_k$, then
	$$
	\delta_1\leq  d(V_k,E_j)-d(x,V_k)\leq  d(x,E_{j})\leq \mathrm{diam}(\cP)
	$$
	and
	$$
	\delta_1\leq \min_{i:i\neq k}d(V_i,V_k)-d(x,V_k)\leq d(x,\hat{V}\cap E_j)\leq \mathrm{diam}(\cP).
$$
	Therefore,  due to \eqref{22.03.16.5}, we have
	$$
	\frac{\delta_1}{\mathrm{diam}(\cP)}\wedge 1\leq \frac{d(x,E_j)\wedge r}{d\big(x,\hat{V}\cap E_j\big)\wedge r}\leq 1 \vee \frac{\mathrm{diam}(\cP)}{\delta_1}.
	$$
	This implies 
	\begin{align*}
		&\prod_{j=1}^{N_0}\left(\frac{d(x,E_j)\wedge r}{d\big(x,\hat{V}\cap E_j\big)\wedge r}\right)^{\beta_j}\nonumber\\
		=&\prod_{j:E_j\ni V_k}\left(\frac{d(x,E_j)\wedge r}{d\big(x,\hat{V}\cap E_j\big)\wedge r}\right)^{\beta_j}\cdot\prod_{j:E_j\not\ni V_k}\left(\frac{d(x,E_j)\wedge r}{d\big(x,\hat{V}\cap E_j\big)\wedge r}\right)^{\beta_j}\\
		 \simeq &\prod_{j:E_j\ni V_k}\left(\frac{d(x,E_j)\wedge r}{d\big(x,\hat{V}\cap E_j\big)\wedge r}\right)^{\beta_j}=\prod_{j: E_j\ni V_k}\left(\frac{d(x,E_j)\wedge r}{d(x,V_k)\wedge r}\right)^{\beta_j}\,.\nonumber
	\end{align*}
	Since $d(x,V_k)\leq \frac{r_0}{2}$ and  $ \cP\cap B(V_k,r_0)=V_k+(\cD_{v,k} \cap B({\bf{0}},r_0))$,  by Lemma~\ref{21.11.17.10} (ii) we have
	$$
	\prod_{j:V_k\in E_j}\left(\frac{d(x,E_j)\wedge r}{d(x,V_k)\wedge r}\right)^{\beta_j}\simeq \left(\frac{d(x,E_l)\wedge r}{d(x,V_k)\wedge r}\right)^{\beta_l}\,.
	$$
	Therefore,  \eqref{240913452} is proved.
	
	\textbf{Case 2)} $d(x,\hat{V})\geq \delta_1$
	
	Since $d(x,\hat{V})\geq \delta_1$ and $r\in (0,R]$, we have
	\begin{align*}
	\frac{\delta_1}{R}\wedge 1\leq \frac{d(x,\hat{V}\cap E_j)}{r}\wedge 1\leq 1
	\end{align*}
	for all $j=1,\,\ldots,\,N_0$.	
	Therefore for any $j=1,\,\ldots,\,N_0$,
	\begin{align}\label{22.03.16.6}
		\left(\frac{d(x,E_j)\wedge r}{d(x,\hat{V}\cap E_j)\wedge r}\right)^{\beta_j}\simeq \left(\frac{d(x,E_j)}{r}\wedge 1\right)^{\beta_j}\,.
	\end{align}

For each $j=1,\,\ldots,\,N_0$, put 
	$$
	\widetilde{E}_j:=E_j\setminus \bigcup_{i=1}^{M_0}B\Big(V_i,\frac{\delta_1}{2}\Big)\,,
	$$
	so that $\widetilde{E}_j\cap E_{j'}=\emptyset$ for all $j,\,j'$ with $j\neq j'$.
	Take $\delta_2>0$ satisfying
	$$
	2\delta_2\leq \delta_1\wedge\min_{j,j':j\neq j'}d(\widetilde{E}_j,E_{j'})\,.
	$$ 
	
	\textbf{Case 2.1)} $d(x,\hat{V})\geq \delta_1$ and $d(x,\hat{E})=d(x,E_l)\leq \delta_2$.
	
	We take $\xi(x)\in E_l$ such that $|x-\xi(x)|=d(x,E_l)$.
	Then
	$$
	d\big(\xi(x),\hat{V}\big)\geq d(x,\hat{V})-\big|x-\xi(x)\big|=d(x,\hat{V})-d(x,E_l)\geq \delta_1-\delta_2\geq \frac{\delta_1}{2}.
	$$
	Therefore $\xi(x)\in \widetilde{E}_l$, which implies $d(x,E_l)=d(x,\widetilde{E}_l)$.
	From this, we get  for all $j\neq l$, 
	$$
	d(x,E_j)\geq d\big(\xi(x),E_j\big)-\big|x-\xi(x)\big|\geq d(\widetilde{E}_l,E_j)-d\big(x,E_l\big)\geq \delta_2\,.
	$$
	Therefore  for any $j\neq l$, 
	\begin{align}\label{240913453}
	\frac{\delta_2}{R}\wedge 1\leq\frac{d(x,E_j)}{r}\wedge 1\leq 1.
	\end{align}
	Combining \eqref{240913453}, \eqref{22.03.16.6} and  the fact $d(x,\hat{E})=d(x,E_l)$,  we get \eqref{240913452}.	
	
	\textbf{Case 2.2)} $d(x,\hat{V})\geq \delta_1$ and $d(x,\hat{E})\geq \delta_2$.

Due to $r\leq R$, we have
\begin{align}\label{241014304}
	\frac{\delta_2}{R}\wedge 1\leq \frac{d(x,E_j)}{r}\wedge 1\leq 1\quad\text{for all}\quad j=1,\,\ldots,\,M_0.
\end{align}
Therefore, \eqref{240913452} follows from \eqref{22.03.16.6} and \eqref{241014304}.
\end{proof}

\begin{lemma}\label{21.11.22.3}
	There exists  a constant $R_0=R_0(\cP)>0$ such that
	
	\begin{enumerate}[label=(\roman*)]
	\item 	for any  $\lambda_{v,i}\in \big( 0,\hat{\lambda}_{v,i}^+(\cP,\cL)\big)$ and $\lambda_{e,j}\in\big(0,\hat{\lambda}_{e,j}^+(\cP,\cL)\big)$, $i=1,\,\ldots,\,M_0$ and $j=1,\,\ldots,\,N_0$,
		there exists $N=N\big(\cP,\cL,(\lambda_{v,i})_{1\leq i\leq M_0},\,(\lambda_{e,j})_{1\leq j\leq N_0}\big)$ fulfilling the following: for any $(t_0,x_0)\in \bR\times \cP$ and $0<r\leq R_0$, if $u$ belongs to $\cV_2^{1,0}\big(Q^{\cP}_{r}(t_0,x_0)\big)$ and satisfies
	\begin{align*}
		& \cL u=0 \quad\text{on}\,\,\,\,Q^{\cP}_{r}(t_0,x_0)\quad\text{and}\quad u|_{(\bR\times \partial\cP)\cap Q_{r}(t_0,x_0)}=0
	\end{align*}
	then, the estimate
	\begin{align}\label{22.03.15.2}
		|u(t,x)|\leq N\cdot {\bf{I}}(x,r;\Lambda)\sup_{Q^{\cP}_{r}(t_0,x_0)}|u|
	\end{align}
	holds for all $(t,x)\in Q_{r/64}(t_0,x_0)$, where $\Lambda:=\big((\lambda_{v,i})_{1\leq i\leq M_0},\,(\lambda_{e,j})_{1\leq j\leq N_0}\big)$  and 
	\begin{align*}
	&{\bf{I}}(x,r;\Lambda)\\
	&:=\left(\,\prod_{i=1}^{M_0}\left(\frac{d(x,V_i)}{r}\wedge 1\right)^{\lambda_{v,i}}\right) \left(\,\prod_{j=1}^{N_0}\left(\frac{d(x,E_j)\wedge r}{d\big(x,\hat{V}\cap E_j\big)\wedge r}\right)^{\lambda_{e,j}}\right) \frac{d(x,\partial\cP)\wedge r}{d(x,\hat{E})\wedge r} ; \nonumber
	\end{align*}

	\item	if  $\lambda_{v,i}\in \big( 0,\hat{\lambda}_{v,i}^-(\cP,\cL)\big)$ and $\lambda_{e,j}\in\big(0,\hat{\lambda}_{e,j}^-(\cP,\cL)\big)$ for $i=1,\,\ldots,\,M_0$ and $j=1,\,\ldots,\,N_0$, then the same statement of (i) holds with $\tilde{\cL}$ in place of $\tilde{\cL}$.
\end{enumerate}
\end{lemma}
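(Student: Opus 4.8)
The plan is to prove (i) only; statement (ii) follows by the identical argument with $\tilde{\cL}$ in place of $\cL$ and each $\hat{\lambda}^-$ in place of the corresponding $\hat{\lambda}^+$. The whole proof is a localization: at the scale $r$ the domain $\cP$ looks, near any point, like one of the model domains already analyzed — a translated polyhedral cone near a vertex, a wedged domain near an edge, a half-space near a face, or all of $\bR^3$. So I would reduce \eqref{22.03.15.2} to Remark~\ref{remark 12.14} (that is, Lemma~\ref{21.11.02.2} applied to $V_k+\cD_{v,k}$), to the argument of Lemma~\ref{estimate.edges.} carried out inside a wedged domain, or to Lemma~\ref{half space.estimate.}, and then invoke Lemma~\ref{22.03.16.7} to rewrite the resulting ``model weight'' as ${\bf I}(x,r;\Lambda)$. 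First I would fix $R_0=R_0(\cP)\in(0,1)$ small enough that the following hold for every $r\in(0,R_0]$, every $(t_0,x_0)\in\bR\times\cP$ and every $(t,x)\in Q^{\cP}_{r/64}(t_0,x_0)$ (such $R_0$ exists because $\cP$ has finitely many vertices, edges and faces with positive pairwise separations and the models in Assumption~\ref{ass 2}(iii) are exact on balls of the stated radii): (a) if $d(x,\hat V)\le r_0/8$, then with $V_k$ the nearest vertex one has $B_r(x_0)\subset B_{r_0}(V_k)$, hence $\cP\cap B_r(x_0)=(V_k+\cD_{v,k})\cap B_r(x_0)$, and the edge of $\cP$ closest to $x$ is one of the edges through $V_k$ (an edge of $\cP$ not through $V_k$ cannot meet $B_{r_0}(V_k)$, by Assumption~\ref{ass 2}(ii) and exactness of the cone model); (b) if $d(x,\hat V)>r_0/8$ and $B_r(x_0)$ meets an edge $E_l$, then $B_r(x_0)$ meets no vertex and no other edge and $\cP$ coincides, up to translation, with $\mathcal{W}_{e,l}$ on $B_r(x_0)$ (apply Assumption~\ref{ass 2}(iii)(b) at the foot of the perpendicular from $x_0$ to $E_l$, whose distance to $\hat V$ is $\gtrsim r_0$); (c) if $d(x,\hat V)>r_0/8$, $B_r(x_0)$ meets no edge but meets $\partial\cP$, then $\cP\cap B_r(x_0)$ is a translate and rotation of $\bR^3_+\cap B_r({\bf 0})$. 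Now fix such $r,(t_0,x_0),u,(t,x)$ and let $V_k$ realize $d(x,\hat V)$, $E_l$ realize $d(x,\hat E)$.

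\textbf{Case $d(x,\hat V)\le r_0/8$.} By (a), $Q^{\cP}_r(t_0,x_0)=Q^{V_k+\cD_{v,k}}_r(t_0,x_0)$ and $u$ solves $\cL u=0$ there with zero lateral data; since $\hat{\lambda}^+_{v,k}(\cP,\cL)=\hat{\lambda}^+_o(\cD_{v,k},\cL)$ and $\hat{\lambda}^+_{e,j}(\cP,\cL)=\hat{\lambda}^+_{e}(\mathcal{W}_{e,j},\cL)$, the exponents $\lambda_{v,k}$ and $(\lambda_{e,j})_{j:E_j\ni V_k}$ are admissible for $V_k+\cD_{v,k}$, so Remark~\ref{remark 12.14} gives, for $(t,x)\in Q^{\cP}_{r/64}(t_0,x_0)$,
$$
|u(t,x)|\le N\Big(\tfrac{d(x,V_k)}{r}\wedge1\Big)^{\lambda_{v,k}}\Big(\prod_{j:E_j\ni V_k}\big(\tfrac{d(x,E_j)\wedge r}{d(x,V_k)\wedge r}\big)^{\lambda_{e,j}}\Big)\tfrac{d(x,\partial\cP)\wedge r}{d(x,\hat E)\wedge r}\sup_{Q^{\cP}_r(t_0,x_0)}|u|,
$$
where I use that on $B_r(x_0)$ the distances to the edges, faces and vertex of $V_k+\cD_{v,k}$ agree with $d(x,E_j),d(x,\partial\cP),d(x,V_k)$ (because $\cP$ and $V_k+\cD_{v,k}$ coincide on $B_{r_0}(V_k)$ and $d(x,V_k)\le r_0/8$), and that $d(x,\hat E)=\min_{j:E_j\ni V_k}d(x,E_j)$. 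Since $d(x,\hat V)=d(x,V_k)$ and $d(x,\hat E)=d(x,E_l)$ with $E_l\ni V_k$, Lemma~\ref{22.03.16.7} rewrites the right side as $N\,{\bf I}(x,r;\Lambda)\sup_{Q^{\cP}_r(t_0,x_0)}|u|$, which is \eqref{22.03.15.2}.

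\textbf{Case $d(x,\hat V)>r_0/8$.} Taking $R_0\le r_0/8$ makes every vertex factor of ${\bf I}(x,r;\Lambda)$ equal to $1$ and $d(x,V_k)\wedge r=r$. If $B_r(x_0)$ meets an edge $E_l$, then by (b) $\cP$ coincides up to translation with $\mathcal{W}_{e,l}$ on $B_r(x_0)$, and the argument of Lemma~\ref{estimate.edges.} — which uses only the definition $\hat{\lambda}^+_{e,l}(\cP,\cL)=\hat{\lambda}^+(\mathcal{W}_{e,l},\cL)$ (cf.\ Remark~\ref{edge.def.2.}, Definition~\ref{241227911}(ii)) and Lemma~\ref{half space.estimate.}, and is trivial as a wedged domain has no vertex — yields $|u(t,x)|\le N(\tfrac{d(x,E_l)}{r}\wedge1)^{\lambda_{e,l}}\tfrac{d(x,\partial\cP)\wedge r}{d(x,E_l)\wedge r}\sup|u|$ on $Q^{\cP}_{r/32}(t_0,x_0)\supset Q^{\cP}_{r/64}(t_0,x_0)$; if $B_r(x_0)$ meets $\partial\cP$ but no edge, Lemma~\ref{half space.estimate.} gives $|u(t,x)|\le N(\tfrac{d(x,\partial\cP)}{r}\wedge1)\sup|u|$; and if $B_r(x_0)\subset\cP$, trivially $|u(t,x)|\le\sup_{Q^{\cP}_r(t_0,x_0)}|u|$. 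In each subcase Lemma~\ref{22.03.16.7}, together with the elementary observation that a feature only at distance $\simeq r$ from $x$ contributes a factor comparable to $1$, identifies the bound with $N\,{\bf I}(x,r;\Lambda)\sup_{Q^{\cP}_r(t_0,x_0)}|u|$, which is \eqref{22.03.15.2}. This proves (i); (ii) is the same with $\tilde{\cL}$.

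\textbf{Main obstacle.} The analysis is entirely inherited from the cone and wedge estimates; the genuine work is the geometric calibration of $R_0$, so that for every $r\le R_0$ the ball $B_r(x_0)$ lies inside the \emph{exact} model around the feature nearest to $x$ and sees only that feature, and so that the edge/vertex set attached to the model weight produced by Lemma~\ref{21.11.02.2}, Lemma~\ref{estimate.edges.} or Lemma~\ref{half space.estimate.} is precisely the one to which Lemma~\ref{22.03.16.7} applies — most delicately, that in the vertex case the edge closest to $x$ is one of those through $V_k$. Once this bookkeeping, and the matching of the $\lambda$-parameters via $\hat{\lambda}^\pm_{v,i}(\cP,\cL)=\hat{\lambda}^\pm_o(\cD_{v,i},\cL)$ and $\hat{\lambda}^\pm_{e,j}(\cP,\cL)=\hat{\lambda}^\pm_{e}(\mathcal{W}_{e,j},\cL)$, are in place, the cases close immediately.
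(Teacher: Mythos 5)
Your overall strategy is the same as the paper's: split into cases according to the nearest feature of $\cP$ at scale $r$, reduce to the model estimates (Lemma \ref{21.11.02.2} via Remark \ref{remark 12.14} near a vertex, the wedge argument of Lemma \ref{estimate.edges.} near an edge, Lemma \ref{half space.estimate.} near a face), and assemble the weight ${\bf{I}}(x,r;\Lambda)$ with Lemma \ref{22.03.16.7}. Your vertex and edge cases are sound. The gap is your geometric claim (c), which is false and is not rescued by any choice of $R_0$: ``$B_r(x_0)$ meets $\partial\cP$ but no edge'' does not imply that $\cP\cap B_r(x_0)$ is a rotated half-ball. Near an edge of inner angle $\kappa=2\pi-\epsilon$, take $x_0$ at distance $\rho$ from the edge but at distance about $\epsilon\rho\ll\rho$ from \emph{both} adjacent faces, and $r=\rho/2$: then $B_r(x_0)$ meets the two faces without meeting the edge, while Assumption \ref{ass 2}(iii)(c) only guarantees the half-space model at scale $r_2\,d(\xi,\hat{E})$ with $r_2\lesssim\epsilon$, far below $r$. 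The configuration is scale-invariant, so it occurs for arbitrarily small $r$. Nor can this subcase be dismissed as trivial: there ${\bf{I}}(x,r;\Lambda)\simeq d(x,\partial\cP)/r\ll 1$, so the linear boundary decay is genuinely needed, and Lemma \ref{half space.estimate.} cannot be applied on $Q^{\cP}_r(t_0,x_0)$ precisely because its hypothesis (the boundary piece lies in a single face) is what fails.

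The paper avoids this by placing the case thresholds on the evaluation point $x$ rather than on which features $B_r(x_0)$ meets: its Case 3 (the face case) only arises when $d(x,\hat{E})\geq \delta_1 r_1/8$, a constant depending on $\cP$ alone, so that after passing to the foot $\xi'(x)$ of the perpendicular from $x$ to $\partial\cP$ one has $d(\xi'(x),\hat{E})\gtrsim \delta_1 r_1$ and, for $r\leq R_0:=\delta_1 r_1 r_2/64$, the whole ball $B_r(x)$ sits inside the exact half-ball $B\big(\xi'(x),r_2\,d(\xi'(x),\hat{E})\big)$. The regime of my counterexample, where $d(x,\hat{E})\simeq r$ but $d(x,\partial\cP)\ll r$, then lands in the paper's Case 2 and is handled by the two-scale argument already built into Lemma \ref{estimate.edges.}/Lemma \ref{21.11.02.2} (zooming in to $Q_{R'}(t,x)$ with $R'\simeq r_1\big(d(x,\hat{E})\wedge r\big)$ centered at $x$). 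If you replace your trichotomy on $B_r(x_0)$ by the paper's cascading thresholds on $d(x,\hat{V})$, $d(x,\hat{E})$, $d(x,\partial\cP)$, your argument closes and coincides with the paper's.
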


\begin{proof}
	Due to the similarity we only prove (i).
	
	We take positive constants $r_0,\,r_1,\,r_2\in(0,1]$ from  Assumption \ref{ass 2}. Let $\delta_1\in(0,r_0/2]$ be a constant satisfying \eqref{22.03.16.11}.
	
	Let $d(x,\hat{V})=d(x,V_k)$ and $d(x,\hat{E})=d(x,E_l)$.
	
	\textbf{Case 1)} $d(x,\hat{V})=d(x,V_k)\leq \frac{\delta_1}{2}$.
	
	Let $r\leq \frac{\delta_1}{2}$. 
	By Lemma \ref{22.03.16.7}, 
	\begin{align}\label{22.03.16.13}
		{\bf{I}}(x,r;\Lambda)\simeq \left(\frac{d(x,V_k)}{r}\wedge 1\right)^{\lambda_{v,k}}\cdot\left(\,\prod_{j:E_j\ni V_k}\left(\frac{d(x,E_j)\wedge r}{d(x,V_k)\wedge r}\right)^{\lambda_{e,j}}\right)\cdot \frac{d(x,\partial\cP)\wedge r}{d(x,\hat{E})\wedge r}\,.
	\end{align}
	Since $d(x,\hat{V})=d(x,V_k)\leq \frac{\delta_1}{2}\leq \frac{r_0}{4}$, we have $B(x,r)\subset B\big(V_k,\frac{r_0}{2}\big)$. Also,
	 by \eqref{241014218}	
	$$
	    \cP\cap B(V_k,r_0)=(V_k+\cD_{v,k}) \cap B(V_k,r_0).
	    $$
	    Therefore \eqref{22.03.15.2} follows from \eqref{22.03.16.13} and Lemma~\ref{21.11.02.2}.(i) (also see Remark \ref{remark 12.14}).

	\textbf{Case 2)} $d(x,\hat{V})\geq \frac{\delta_1}{2}$ and $d(x,\hat{E})=d(x,E_l)<\frac{\delta_1r_1}{8}$.
	
	Let $r\leq \delta_1r_1/16$.
	Since
	$$
	\frac{8}{r_1}\wedge1\leq \frac{d(x,V_i)}{r}\wedge 1\leq 1\quad\text{for all}\,\,\,i=1,\,\ldots,\,N_0,
	$$
	Lemma~\ref{22.03.16.7} implies 
	\begin{align}\label{22.03.16.14}
		{\bf{I}}(x,r;\Lambda)\simeq \left(\frac{d(x,E_l)}{r}\wedge 1\right)^{\lambda_{e,l}}\cdot \frac{d(x,\partial\cD)\wedge r}{d(x,E_l)\wedge r}\,.
	\end{align}
	Take $\xi(x)\in E_l$ satisfying $|x-\xi(x)|=d(x,E_l)=d(x,E)$.
	Then we have
	$$
	d(\xi(x),\hat{V})\geq d(x,\hat{V})-|x-\xi(x)|\geq \frac{3}{8}\delta_1
	$$
	and
	$$
	B(x,r)\subset B\Big(\xi(x),\frac{3\delta_1r_1}{16}\Big)\subset B\Big(\xi(x),\frac{r_1d(\xi(x),\hat{V})}{2}\Big).
	$$
	Considering
	\begin{align*}
		\cP\cap B\big(\xi(x),r_1d(\xi(x),\hat{V})\big) = (\xi(x)+\cD_{v,l})\cap B\big(\xi(x),r_1d(\xi(x),\hat{V})\big),
	\end{align*}
	 we  apply Lemma~\ref{21.11.02.2}.(i) (also see Remark \ref{remark 12.14}) and use \eqref{22.03.16.14} to get \eqref{22.03.15.2}.
	
	\textbf{Case 3)} $d(x,\hat{E})\geq \frac{\delta_1r_1}{8}$ and $d(x,\partial\cP)< \frac{\delta_1r_1r_2}{32}$.
	
	Let $r\leq \frac{\delta_1r_1r_2}{64}$.
	By Lemma~\ref{22.03.16.7} and 
	$$
	\frac{8}{r_2}\wedge 1\leq \frac{d(x,E_l)}{r}\wedge 1\Big(=\frac{d(x,\hat{E})}{r}\wedge 1\Big)\leq \frac{d(x,V_k)}{r}\wedge 1\Big(=\frac{d(x,\hat{V})}{r}\wedge 1\Big)\leq  1\,,
	$$
	we have
	\begin{align}\label{22.03.16.15}
		{\bf{I}}(x,r;\Lambda)\simeq \frac{d(x,\partial\cD)}{r}\wedge 1\,.
	\end{align}
	Choose $\xi'(x)\in\partial\cD$ satisfying $|x-\xi'(x)|=d(x,\partial\cP)$.
	Then
	$$
	d(\xi'(x),\hat{E})\geq d(x,\hat{E})-|x-\xi'(x)|\geq \frac{3\delta_1r_1}{32},
	$$
	and therefore
	$$
	B(x,r)\subset B\Big(\xi'(x),\frac{3\delta_0r_1r_2}{64}\Big)\subset B\Big(\xi'(x),\frac{r_2d(\xi'(x),V)}{2}\Big)\,.
	$$
	Note that
$$
	\cP\cap B\big(\xi'(x),r_2d(\xi'(x),\hat{V})\big)=\big(V_i+\cD_i\big)\cap B\big(\xi'(x),r_2d(\xi'(x),\hat{V})\big).
	$$
Due to \eqref{22.03.16.15} and Lemma \ref{21.11.02.2} (also see Remark \ref{remark 12.14}) for $\cD:=V_i+\cD_i$ and $r:=\frac{3\delta_0r_1r_2}{64}$, we get \eqref{22.03.15.2}.
	
	\textbf{Case 4)} $d(x,\partial\cP)\geq \frac{\delta_1r_1r_2}{32}$.
	
	In this case, 
	$$
	\frac{\delta_1r_1r_2}{32}\leq 	d(x,\partial\cP)\leq d(x,\hat{E})\leq d(x,\hat{V})\,,
	$$
	so we have
	$I(x,r;\Lambda^+)\simeq 1$ for all $0<r\leq \frac{\delta_1r_1r_2}{64}$.
	Therefore  \eqref{22.03.15.2}  trivially holds.

	Taking $R_0=\delta_1r_1r_2/64$, we complete the proof.
\end{proof}

\begin{lemma}\label{22.03.15.4}
	Let $u_0\in L_2(\cP)$ and $u\in\mathring{\cV}_2^{1,0}((0,T]\times \cP)$ satisfy (see Definition \ref{250219615})
	$$
	\cL u=0 \quad \text{on}\quad (0,T]\times \cP; \quad\quad u(0)=u_0\,.
	$$
	Then for any $t\in(0,T]$, 
	$$
	\|u(t)\|_{L_2(\cP)}\leq e^{-\nu_1\lambda t}\|u_0\|_{L_2(\cP)},
	$$
	where $\lambda$ is the first eigenvalue of Laplace operator on $\cP$ with the zero Dirichlet boundary condition.
\end{lemma}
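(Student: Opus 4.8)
The plan is to derive a differential inequality for $\|u(t)\|_{L_2(\cP)}^2$ and integrate it via Gr\"onwall's inequality. Since $u\in\mathring{\cV}_2^{1,0}((0,T]\times\cP)$ solves $\cL u=0$ with $u(0)=u_0$ (in the sense of Definition \ref{250219615}), the section $u(t,\cdot)$ belongs to $\mathring{W}_2^{1}(\cP)$ for a.e. $t$, and by the standard energy identity for weak solutions of parabolic equations with time-measurable coefficients (see \cite[Chapter III]{LSU_1968}), the function $t\mapsto\|u(t)\|_{L_2(\cP)}^2$ is absolutely continuous on $[0,T]$ with
\begin{equation*}
\frac{1}{2}\frac{d}{dt}\|u(t)\|_{L_2(\cP)}^2=-\int_{\cP}\sum_{i,j=1}^{3}a_{ij}(t)\,D_iu(t,x)\,D_ju(t,x)\,dx
\end{equation*}
for a.e. $t\in(0,T]$. (One first establishes this identity for the Steklov averages of $u$ in $t$ and then passes to the limit; alternatively one may use a Galerkin approximation.)

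Next I would bound the right-hand side from above. By the lower parabolicity bound in \eqref{uniform parabolicity},
\begin{equation*}
\int_{\cP}\sum_{i,j=1}^{3}a_{ij}(t)\,D_iu(t,x)\,D_ju(t,x)\,dx\geq \nu_1\int_{\cP}|\nabla u(t,x)|^2\,dx,
\end{equation*}
and, by the variational characterization of the first Dirichlet eigenvalue,
\begin{equation*}
\lambda=\inf\Big\{\frac{\|\nabla v\|_{L_2(\cP)}^2}{\|v\|_{L_2(\cP)}^2}\,:\,v\in\mathring{W}_2^{1}(\cP),\ v\neq 0\Big\},
\end{equation*}
we have $\|\nabla u(t)\|_{L_2(\cP)}^2\geq \lambda\,\|u(t)\|_{L_2(\cP)}^2$ since $u(t,\cdot)\in\mathring{W}_2^{1}(\cP)$ for a.e. $t$. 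Combining the two inequalities gives
\begin{equation*}
\frac{d}{dt}\|u(t)\|_{L_2(\cP)}^2\leq -2\nu_1\lambda\,\|u(t)\|_{L_2(\cP)}^2\qquad\text{for a.e. }t\in(0,T].
\end{equation*}

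Finally, since $t\mapsto\|u(t)\|_{L_2(\cP)}^2$ is absolutely continuous and satisfies this differential inequality with initial value $\|u_0\|_{L_2(\cP)}^2$, Gr\"onwall's inequality yields $\|u(t)\|_{L_2(\cP)}^2\leq e^{-2\nu_1\lambda t}\|u_0\|_{L_2(\cP)}^2$; taking square roots completes the proof. The only genuinely technical point — and hence the main obstacle — is the validity of the energy identity (equivalently, the absolute continuity of $t\mapsto\|u(t)\|_{L_2(\cP)}^2$ and the differentiation formula above) for merely weak solutions with $t$-measurable coefficients; this is classical, so I would simply invoke \cite{LSU_1968} rather than reproduce the Steklov-averaging argument. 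Everything else is a one-line computation.
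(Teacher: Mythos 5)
Your proof is correct and follows essentially the same route as the paper: both rest on the lower parabolicity bound combined with the Poincar\'e/first-eigenvalue inequality $\|\nabla u(t)\|_{L_2(\cP)}^2\ge \lambda\|u(t)\|_{L_2(\cP)}^2$. The only cosmetic difference is that you use the differential energy identity plus Gr\"onwall, whereas the paper substitutes $v=e^{\nu_1\lambda t}u$ and applies the integrated energy inequality from \cite{LSU_1968}, which sidesteps the absolute-continuity issue you flag as the main technical point.
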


\begin{proof}
	Put $v(t,x)=e^{\nu_1\lambda t}u(t,x)$, so that
	\begin{align*}
		v_t=\cL v+\nu_1\lambda v\quad;\quad v(0)=u_0\,.
	\end{align*}
	By the energy inequality (see, \textit{e.g.}, \cite[Lemma III.2.1]{LSU_1968}), we have
	$$
	\frac{1}{2}\|v(t)\|_{L_2(\cP)}^2\leq \frac{1}{2}\|u_0\|_{L_2(\cP)}^2+\nu_1\lambda\int_0^t\int_{\cP}v^2dx\,ds-\int_0^t\int_{\cP} a_{ij}(s) D_ivD_jvdx\,ds\,.
	$$
	By the uniform parabolicity of $a_{ij}(s)$ and the Poincare inequality, we have
	$$
	\int_0^t\int_{\cP} a_{ij}(s) D_ivD_jvdx\,ds\geq \nu_1\int_0^t\int_{\cP}|\nabla v|^2dx\,ds\geq \nu_1\lambda \int_0^t\int_{\cP}v^2dx\,ds
	$$
	Consequently, we get
	$$
	e^{\nu_1\lambda t}\|u(t)\|_{L_2(\cP)}=\|v(t)\|_{L_2(\cP)}\leq \|u_0\|_{L_2(\cP)}
	$$
	and proof is completed.
\end{proof}

\begin{lemma}\label{22.03.15.6}
	For any $T_0>0$, there exists $N'=N'(\nu_1,\,\nu_2,\,T_0)>0$ so that
	\begin{align}\label{241021442}
	G_{\cP}(t,s,x,y)\leq N'e^{-\nu_1\lambda (t-s)}
	\end{align}
	for all $t>s+T_0$ and $x,y\in\cP$.
\end{lemma}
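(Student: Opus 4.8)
The plan is to combine the $L_2$-decay estimate of Lemma~\ref{22.03.15.4} with the (unweighted) global Gaussian upper bound for $G_\cP$ — which holds on an arbitrary domain, since the Dirichlet Green's function is dominated by the fundamental solution on all of $\bR^3$ — glued together by the Chapman--Kolmogorov (reproducing) identity for $G_\cP$ collected in Appendix~A.

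Fix $t>s+T_0$ and $x,y\in\cP$, and set $\varepsilon:=T_0/3$, $s_1:=s+\varepsilon$, $s_2:=t-\varepsilon$; since $t-s>T_0$ we have $s<s_1<s_2<t$. Applying the reproducing property twice gives
\begin{align*}
	G_\cP(t,s,x,y)&=\int_\cP\!\!\int_\cP G_\cP(t,s_2,x,w)\,G_\cP(s_2,s_1,w,z)\,G_\cP(s_1,s,z,y)\,dz\,dw\\
	&=\int_\cP f(w)\,h(w)\,dw,
\end{align*}
where $f(w):=G_\cP(t,s_2,x,w)$, $g(z):=G_\cP(s_1,s,z,y)$, and $h(w):=\int_\cP G_\cP(s_2,s_1,w,z)\,g(z)\,dz=G_\cP(s_2,s,w,y)$. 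Since $g\in L_2(\cP)$ (immediate from the Gaussian bound and $|\cP|<\infty$) and $h$ is the value at time $s_2$ of the solution of $\cL h=0$ on $(s_1,s_2]\times\cP$ with initial datum $g$, Lemma~\ref{22.03.15.4} yields $\|h\|_{L_2(\cP)}\le e^{-\nu_1\lambda(s_2-s_1)}\|g\|_{L_2(\cP)}$, and Cauchy--Schwarz gives $G_\cP(t,s,x,y)\le\|f\|_{L_2(\cP)}\,\|g\|_{L_2(\cP)}\,e^{-\nu_1\lambda(s_2-s_1)}$.

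It remains to control the two short-time ``end caps''. By the global Gaussian bound $G_\cP(t',s',\cdot,\cdot)\le N(\nu_1,\nu_2)\,(t'-s')^{-3/2}e^{-\sigma|\cdot-\cdot|^2/(t'-s')}$ (valid on $\cP$; cf. Appendix~A and the estimate used for $\cD$ in the proof of Theorem~\ref{theorem polyhedral cone}), integrating over $\bR^3\supset\cP$ gives $\|f\|_{L_2(\cP)}^2\le N(t-s_2)^{-3/2}=N\varepsilon^{-3/2}$ and likewise $\|g\|_{L_2(\cP)}^2\le N\varepsilon^{-3/2}$, with $N=N(\nu_1,\nu_2)$. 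Since $s_2-s_1=(t-s)-2\varepsilon$, we conclude
$$
G_\cP(t,s,x,y)\le N\,\varepsilon^{-3/2}\,e^{2\nu_1\lambda\varepsilon}\,e^{-\nu_1\lambda(t-s)},
$$
and with $\varepsilon=T_0/3$ the prefactor becomes a constant $N'=N'(\nu_1,\nu_2,T_0)$ (the first Dirichlet eigenvalue $\lambda$ depends only on the fixed domain $\cP$). I do not expect a genuine obstacle; the only point that needs care is making the reproducing identity and the application of Lemma~\ref{22.03.15.4} rigorous — i.e. verifying that $g=G_\cP(s_1,s,\cdot,y)$ genuinely serves as $\mathring{\cV}_2^{1,0}$-initial data for the evolution on $(s_1,s_2]$ — which is precisely the content of the well-posedness and representation results assembled in Appendix~A.
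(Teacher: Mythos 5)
Your proof is correct and follows essentially the same strategy as the paper's: short-time Gaussian bounds to pass into $L_2$, the exponential $L_2$-decay of Lemma \ref{22.03.15.4}, and the reproducing identity \eqref{250114346} combined with Cauchy--Schwarz to return to a pointwise bound. The only difference is structural and minor: the paper splits once at the midpoint $r=(t+s)/2$ and applies the $L_2$-decay to both factors (the backward-in-time one via the adjoint Green's function of Lemma \ref{250102534}.(ii)), whereas you use a three-way split and apply the decay only to the forward evolution on the middle interval, which spares you the time-reversed version of Lemma \ref{22.03.15.4} at the cost of one extra application of \eqref{250114346}.
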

\begin{proof}
	It follows from \eqref{250102531} that for any fixed $(s,y)\in\bR\times \cD$,
	$$
	\Big\|G_{\cP}\Big(s+\frac{T_0}{2},s,\,\cdot\,,y\Big)\Big\|_{L_2(\cP)}\leq N(\nu_1, \nu_2,T_0)\,.
	$$
	By Lemma~\ref{22.03.15.4} applied for $u(t,\cdot)=G_{\cP}\left(s+\frac{T_0}{2}+t,s,\cdot,y\right)$, we have
	\begin{align}\label{241224118}
	\|G_{\cP}\big(r,s,\,\cdot\,,y\big)\|_{L_2(\cP)}\leq Ne^{-\nu_1\lambda (r-s-T_0/2)}\quad\text{for all}\quad r\geq s+\frac{T_0}{2}\,.
	\end{align}
	Similarly we have
	\begin{align}
	\|G_{\cP}(t,r,x,\,\cdot\,)\|_{L_2(\cP)}\leq N e^{-\nu_1\lambda (t-r-T_0/2)}\quad\text{for all}\quad t\geq r+\frac{T_0}{2}\,.
	\end{align}
	For $t>s+T_0$, using \eqref{250114346} with $r:=\frac{t+s}{2}$, we have
	\begin{align}\label{241224117}
		\begin{aligned}
		G_{\cP}(t,s,x,y)\,&=\int_{\cP}G_{\cP}(t,r,x,z)G_{\cP}(r,s,z,y)dz\\
		&\leq \|G_{\cP}(t,r,x,\,\cdot\,)\|_{L_2(\cP)}\|G_{\cP}(r,s,\,\cdot\,,y)\|_{L_2(\cP)}\,.
		\end{aligned}
	\end{align}
By combining \eqref{241224118} - \eqref{241224117}, we complete the proof.
	\end{proof}

\textbf{Proof of Theorem \ref{theorem polyhedron}}

 Let  $\hat{\lambda}_{v,i}^\pm(\cP,\cL)$ and $ \hat{\lambda}_{e,j}^\pm(\cP,\cL) $ be critical exponents defined in Definiition \ref{241227911}. We remind that the the claim of the theorem is the following:  if 
 $\lambda_{v,i}^\pm\in\big(0,\hat{\lambda}_{v,i}^\pm(\cP,\cL)\big)$ and $\lambda_{e,j}^\pm\in \big(0,\hat{\lambda}_{e,j}^\pm(\cP,\cL)\big)$, then
\vspace{2mm}

$\bullet$   there exist  constants $N=N\big(\cP,\cL, \Lambda^\pm\big)>0$ and $\sigma=\sigma(\nu_1,\nu_2)>0$ so that the inequality 
	\begin{align}\label{24102144011}
		G_{\cP}(t,s,x,y)\leq N\, {\bf{I}}\left(x,\sqrt{t-s};\Lambda^+\right){\bf{I}}\left(y,\sqrt{t-s};\Lambda^-\right)\cdot (t-s)^{-3/2}e^{-\sigma\frac{|x-y|^2}{t-s}}
	\end{align}
	holds for any $t> s$ and $x,\,y\in \cP$ , where  $\Lambda^{\pm}:=\big((\lambda_{v,i}^{\pm})_{1\leq i\leq M_0},\,(\lambda_{e,j}^{\pm})_{1\leq j\leq N_0}\big)$ and 
	\begin{align*}
	&{\bf{I}}(x,r;\Lambda^{\pm})\\
	&:=\left(\,\prod_{i=1}^{M_0}\left(\frac{d(x,V_i)}{r}\wedge 1\right)^{\lambda^{\pm}_{v,i}}\right)\cdot \left(\,\prod_{j=1}^{N_0}\left(\frac{d(x,E_j)\wedge r}{d\big(x,\hat{V}\cap E_j\big)\wedge r}\right)^{\lambda^{\pm}_{e,j}}\right)\cdot \frac{d(x,\partial\cP)\wedge r}{d(x,\hat{E})\wedge r}; \nonumber
	\end{align*}

	$\bullet$  for any $T_0>0$, there exists a constant $N=N\big(\cP,\cL,\Lambda^{\pm},T_0\big)>0$ such that  if $x,y\in \cP$ and  $t-s\geq T_0$, then
	\begin{align*}
		G_{\cP}(t,s,x,y)\leq N\cdot {\bf{I}}_{\infty}(x;\Lambda^+){\bf{I}}_{\infty}(y;\Lambda^-)e^{-\nu_1\lambda (t-s)}
	\end{align*}
	where
	$$
	{\bf{I}}_{\infty}(x;\Lambda^\pm):=\left(\,\prod_{i=1}^{N_0}d(x,V_i)^{\lambda_{v,i}^\pm}\right)\cdot\left(\,\prod_{i=1}^N\left(\frac{d(x,E_i)}{d\big(x,\hat{V}\cap E_i\big)}\right)^{\lambda_{e,j}^\pm}\right)\cdot \frac{d(x,\partial\cP)}{d(x,\hat{E})}\,.
	$$

Now we begin the proof of the theorem. 
	Note that
\begin{align}\label{241021434}
{\bf{I}}_{\infty}(x;\Lambda^\pm)=(\mathrm{diam}(\cP))^{\sum_{i=1}^{M_0} \lambda_{v,i}^{\pm}} \cdot {\bf{I}}(x,\mathrm{diam}(\cP);\Lambda^\pm)\,.
\end{align}
Moreover, \eqref{22.03.16.5} implise that for any $r,\,r'>0$ satisfying $0<\delta\leq \frac{r}{r'}\leq \delta^{-1}$, there exists a constant $N=N(\Lambda^{\pm},\delta)>0$ such that 
\begin{align}\label{241021435}
N^{-1}{\bf{I}}(x,r;\Lambda^\pm)\leq {\bf{I}}(x,r';\Lambda^\pm)\leq N\,{\bf{I}}(x,r;\Lambda^\pm)\,.
\end{align}

	\textbf{Step 1.} 
	Let $R_0$ be a constant in Lemma~\ref{21.11.22.3}.
	We first prove 
	\begin{align*}
		G_{\cP}(t,s,x,y)\leq N\, {\bf{I}}\left(x,r;\Lambda^+\right){\bf{I}}\left(y,r;\Lambda^-\right)\cdot (t-s)^{-3/2}e^{-\sigma\frac{|x-y|^2}{t-s}}\,,
	\end{align*}
	where $r:=\frac{\sqrt{t-s}}{2}\wedge R_0$.
	For a fixed $(s,y)\in \bR\times \cP$, put $u(t',z)=G_{\cP}(t',s,z,y)$ so that 
	$u$ belongs to $\cV_2^{1,0}\big((s+\epsilon,\infty)\times \cP\big)$ for any $\epsilon>0$ and satisfies
	$$
	\cL u=0 \quad\text{on}\,\,\,\, (s,\infty)\times \cP\quad\text{and}\quad u|_{(s,\infty)\times \partial\cP}=0.
	$$

	By Lemma~\ref{21.11.22.3}, 
	$$
	G_\cP(t,s,x,y)\leq N {\bf{I}}(x,r;\Lambda^\pm)\sup_{(t',z)\in Q_{r}(t,x)}G_\cP(t',s,z,y).
	$$
	For a fixed   $(t',z)\in Q_r(t,x)$,  if we put $v(s',w):=G_\cP(t',s-s',z,w)$, then by Lemma \ref{250102534}
	$$
	\frac{\partial v(s',w)}{\partial s'}-\sum_{i,j=1}^3a_{ij}(s-s') D_{ij}v(s',w)=0\quad\text{on}\quad (s-t',\infty)\times \cP.
	$$
	Using Lemma~\ref{21.11.22.3} again, we get
	\begin{align}\label{22.03.15.9}
		G_\cP(t,s,x,y)\leq N {\bf{I}}(x,r;\Lambda^+){\bf{I}}(y,r;\Lambda^-)\sup_{\substack{(t',z)\in Q_{r}(t,x)\\(s',w)\in Q_{r}(0,y)}}G_\cP(t',s-s',z,w).
	\end{align}
	
For $(t',z)\in Q_{r}(t,x)$ and $(s',w)\in Q_{r}(0,y)$, 
	we have
	\begin{align}\label{22.03.15.10}
		|z-x|^2\geq \frac{|x-y|^2}{2}-4(t-s)\quad\text{and}\quad \frac{t-s}{2}\leq t'-s'+s\leq 2(t-s)\,.
	\end{align}
	By combining \eqref{22.03.15.9} and \eqref{22.03.15.10} with \eqref{250102531}, we obtain that
	\begin{align}\label{241021439}
	G_\cP(t,s,x,y)\leq N{\bf{I}}(x,r;\Lambda^+){\bf{I}}(y,r;\Lambda^-)(t-s)^{-3/2}e^{-\sigma\frac{|x-y|^2}{t-s}}
	\end{align}
	for some $\sigma$ depending only on our dimension $d=3$, and $N$ depending only on $\cP$ and $\cL$.
	\eqref{241021439} and \eqref{241021435} imply \eqref{24102144011} for $s<t\leq s+\mathrm{diam}(\cP)^2$ and $x,\,y\in\cP$, indeed,
	$$
	\frac{R_0}{\mathrm{diam}(\cP)}\wedge \frac{1}{2}\leq \frac{r}{\sqrt{t-s}}\leq \frac{1}{2}\,.
	$$
	
	\textbf{Step 2.} Let us assume that $t\geq s+T_0$.
	For $r=\frac{\sqrt{t-s}}{2}\wedge R_0$, we have
	$$
	\frac{\sqrt{T_0}}{2}\wedge R_0\leq r\leq R_0.
	$$
	Therefore, by \eqref{241021434} and \eqref{241021435}, we have
	\begin{align}\label{22.03.15.7}
		{\bf{I}}(x,r;\Lambda^\pm)\simeq {\bf{I}}_{\infty}(x;\Lambda^\pm)\,.
	\end{align}
	For $t'\in(t-r^2,t]$ and $s'\in (-r^2,0]$, we have $t'-s+s'\geq t-s-2R_0^2$.
	Therefore, by \eqref{241021442}, we obtain 
	\begin{align*}
		\sup_{\substack{(t',z)\in Q_{r}(t,x)\\(s',w)\in Q_{r}(0,y)}}G(t',s-s',z,w)&\leq N \sup_{\substack{t'\in(t-r^2,t]\\s'\in(-r^2,0]}}e^{-\nu_1\lambda(t'-s+s')}\leq N e^{-\nu_1\lambda(t-s)}\,.
	\end{align*}
	By combining \eqref{22.03.15.9}, \eqref{22.03.15.7} and \eqref{22.03.15.6}, we conclude that there exists a constant $N>0$ such that
	$$
	G(t,s,x,y)\leq N\cdot  {\bf{I}}_{\infty}(x;\Lambda^+){\bf{I}}_{\infty}(y;\Lambda^-)e^{-\nu_1\lambda (t-s)}
	$$
	for all $t\geq s+T_0$ and $x,\,y\in\cP$.
	
	\textbf{Step 3.} It is only remained to prove \eqref{24102144011} for $t>s+\mathrm{diam}(\cP)^2$.
	By the result in Step 2, there exists a constant $N>0$ such that
	$$
	G(t,s,x,y)\leq N{\bf{I}}_{\infty}(x;\Lambda^+){\bf{I}}_{\infty}(y;\Lambda^-)e^{-\nu_1\lambda(t-s)}.
	$$
	Since $\sqrt{t-s}\geq \mathrm{diam}(\cP)$,
	by \eqref{241021434} and \eqref{241021435}, we have
	\begin{align*}
	{\bf{I}}_{\infty}(x;\Lambda^\pm)&= {\bf{I}}(x,\mathrm{diam}(\cP);\Lambda^\pm)\cdot \prod_{i=1}^{N_0}\mathrm{diam}(\cP)^{\lambda^\pm_{v,i}}\\
	&={\bf{I}}\big(x,\sqrt{t-s}\wedge \mathrm{diam}(\cP);\Lambda^\pm\big)\cdot \prod_{i=1}^{N_0}\mathrm{diam}(\cP)^{\lambda^\pm_{v,i}}\\
	&= {\bf{I}}(x,\sqrt{t-s};\Lambda^\pm)\cdot \prod_{i=1}^{N_0}\left(\frac{\sqrt{t-s}\cdot \mathrm{diam}(\cP)}{\sqrt{t-s}\wedge \mathrm{diam}(\cP)}\right)^{\lambda_{v,i}^\pm}\,.
	\end{align*}
	Moreover, one can observe that there exists $N=N(\mathrm{diam}(\cP),\Lambda^\pm)>0$ such that
	\begin{align*}
	&\left( \prod_{i=1}^{N_0}\big(\sqrt{t-s}\vee \mathrm{diam}(\cP)\big)^{\lambda_{v,i}^+}\right) \left(\prod_{i=1}^{N_0}\big(\sqrt{t-s}\vee \mathrm{diam}(\cP)\big)^{\lambda_{v,i}^-}\right)e^{-\nu_1\lambda (t-s)}\\
	\leq \,&N\,(t-s)^{-3/2}e^{-\sigma'\frac{\mathrm{diam}(\cP)^2}{t-s}}
	\end{align*}
	whenever $t-s\geq \mathrm{diam}(\cP)$.
	Therefore we have
	\begin{align*}
	&{\bf{I}}_{\infty}(x;\Lambda^+){\bf{I}}_{\infty}(y;\Lambda^-)e^{-\nu_1\lambda(t-s)}\\
	\leq N \,&{\bf{I}}(x,\sqrt{t-s};\Lambda^+){\bf{I}}(y,\sqrt{t-s};\Lambda^-)(t-s)^{-3/2}e^{-\sigma'\frac{|x-y|^2}{t-s}}\,.
	\end{align*}
The proof is completed.
\hfill\qedsymbol{}

\appendix

\mysection{Auxiliary results of Green's functions}\label{Green function}
In this section, we collect elementary properties of Green's function on general domains $\cO\subset \bR^d$, $d\geq 1$, for the operator $\partial_t -\sum_{i,j=1}^d\alpha_{ij}(t)D_{ij}$ under the zero Dirichlet boundary condition.
The coefficients $\alpha_{ij}=\alpha_{ij}(t)$ are time-measurable functions on $\bR$, $\alpha_{ij}=\alpha_{ji}$, and satisfy the uniform parabolicity condition: there exist constants $0<\nu_1\leq \nu_2<\infty$ such that 
\begin{align*}
\nu_1|\xi|^2\leq \sum_{i,j=1}^d\alpha_{ij}(t)\xi^i\xi^j\leq \nu_2|\xi|^2\quad\forall\,\,\xi\in\bR^d\,,\,\,t\in\bR\,.
\end{align*}
We denote
$$
L:=\frac{\partial}{\partial t}-\sum_{i,j=1}^d \alpha_{ij}(t)D_{ij}\quad\text{and}\quad \tilde{L}:=\frac{\partial}{\partial t}-\sum_{i,j=1}^d \alpha_{ij}(-t)D_{ij}
$$

\begin{defn}\label{250219615}
Let $I:=(a,b]\cap \bR$, $-\infty\leq a<b\leq \infty$.

\begin{enumerate}[label=(\roman*)]
\item For  $u\in\cV_2^{1,0}(I\times \cO)$, we say that $L u=f$ in the sense of distributions on $I\times \cO$, if for any $\phi\in C_c^\infty(I\times \cO)$, 
\begin{align*}
\int_{\cO} u(b,x)\phi(b,x)dx+\int_{I\times \cO}\left(-u\partial_t\phi+\sum_{i,j=1}^d\alpha_{ij}D_{i}uD_{j}\phi\right)dxdt=\int_{I\times \cO}f\phi\,dxdt\,,
\end{align*}
where $u(\infty,x)\phi(\infty,x):=0$

\item Let $E$ be a relatively open set of $I\times \partial \cO$. 
For $u\in \cV_2^{1,0}\big(I\times \cO \big)$, we write $u|_{E}=0$ if the following holds: for any $\zeta\in C_c^{\infty}(I\times \overline{\cO})$ with $\mathrm{supp}(\zeta)\cap \big(I\times \partial \cO \big)\subset E$, $\zeta u\in \mathring{\cV}_2^{1,0}\big(I\times \cO\big)$.
\end{enumerate}
\end{defn}

The following lemmas are either explicitly provided in or can be inferred from \cite{ChoDongKim2007,KimXu2022}:

\begin{lemma}[Theorem 2.7 of \cite{ChoDongKim2007}]\label{241226452}
For any  connected open set $\cO\subset \bR^d$, there exists a unique Green's function $G_\cO(t,s,x,y)$ on $\bR\times \bR\times \cO\times \cO$ which is continuous in $\{(t,s,x,y)\,:\,t\neq s \,\,\text{or}\,\, x\neq y\}$, satisfies $G_\cO(t,s,x,y)=0$ for $t<s$, and has the property that $G_\cO(t,\cdot,x,\cdot)$ is locally integrable in $\bR\times \cO$ and
$$
L G(\cdot,s,\cdot,y)=\delta(\cdot-s)\delta(\cdot-y) \quad \text{in}\quad \bR\times \cO\quad  ; \quad   G|_{\bR\times \partial \cO}=0
$$
in the sense of distributions.
More precisely, for any $f=f(s,y)\in C_c^\infty(\bR\times \cO)$ the function $u$ given by
$$
u(t,x):=\int_{\bR\times \cO}G_\cO(t,s,x,y)f(s,y)\;dyds
$$
belongs to $\mathring{\cV}^{1,0}_2(\bR\times \cO)$ and satisfies $L u=f$ in the sense of distributions on $\bR\times \cO$ (see Definition \ref{250219615}).
\end{lemma}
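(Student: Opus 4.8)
This is \cite[Theorem 2.7]{ChoDongKim2007}, so within the present paper I expect it to be disposed of simply by that reference; nevertheless, let me describe the argument one would carry out, emphasising that the only real difficulty is that $\cO$ is merely open (no boundary regularity whatsoever).

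\emph{Weak solvability.} First I would record the $\mathring{\cV}_2^{1,0}$-theory of the Dirichlet problem for $L$: for every $f\in C_c^\infty(\bR\times\cO)$ there is a unique $u=u_f\in\mathring{\cV}_2^{1,0}(\bR\times\cO)$ with $Lu=f$ in the sense of Definition~\ref{250219615} (equivalently, solving $Lu=f$ on $(s,\infty)\times\cO$ with zero initial data for $s$ below the support of $f$, and $u=0$ for earlier times). This is the Galerkin/Lax--Milgram construction together with the energy estimate, and it uses only that $\cO$ is open. The solution map $f\mapsto u_f$ is linear and order-preserving, and by interior De Giorgi--Nash--Moser local boundedness and H\"older estimates (which again do not see $\partial\cO$) it sends nice data to functions continuous in $\{t\neq s\}$.

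\emph{Construction of the kernel.} Next I would exhaust $\cO$ by smooth bounded subdomains $\cO_1\subset\cO_2\subset\cdots$ with $\bigcup_k\cO_k=\cO$. Classical parabolic theory furnishes a Green's function $G_{\cO_k}$ on each, the maximum principle gives $0\le G_{\cO_k}\le G_{\cO_{k+1}}$, and Aronson's upper bound gives $G_{\cO_k}(t,s,x,y)\le N(t-s)^{-d/2}e^{-\sigma|x-y|^2/(t-s)}$ with $N,\sigma$ independent of $k$. Hence $G_\cO:=\lim_k G_{\cO_k}$ exists, is dominated by the Gaussian bound (so $G_\cO(t,\cdot,x,\cdot)$ is locally integrable), vanishes for $t<s$, and --- using interior H\"older estimates to upgrade the monotone convergence to local uniform convergence away from the diagonal --- is continuous on $\{t\ne s\text{ or }x\ne y\}$. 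Passing to the limit in $u_{f,k}(t,x)=\int G_{\cO_k}(t,r,x,z)f(r,z)\,dz\,dr$ and invoking stability of weak solutions identifies $u_f=\int G_\cO f$; testing against $\phi\in C_c^\infty(\bR\times\cO)$ then yields $LG_\cO(\cdot,s,\cdot,y)=\delta(\cdot-s)\delta(\cdot-y)$ in $\bR\times\cO$ with $G_\cO|_{\bR\times\partial\cO}=0$.

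\emph{Uniqueness and the main obstacle.} If $\widetilde G$ is another kernel with the stated properties, then for every $f\in C_c^\infty(\bR\times\cO)$ the function $\int\widetilde G f$ solves the same weak problem $Lu=f$ as $\int G_\cO f$, so uniqueness of weak solutions forces the two integrals to agree for all such $f$; varying $f$ gives $G_\cO=\widetilde G$ almost everywhere, and joint continuity off the diagonal gives equality there. The step I expect to be delicate is precisely the one forced on us by the generality of $\cO$: since no boundary regularity is available, neither the continuity of $G_\cO$ nor its vanishing on $\partial\cO$ nor the independence of the limit from the exhausting sequence can be obtained from boundary Schauder or gradient estimates; they must come solely from the monotone convergence $G_{\cO_k}\uparrow G_\cO$, the $k$-uniform Aronson bounds, and interior regularity --- which is the content developed in \cite{ChoDongKim2007}.
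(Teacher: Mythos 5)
The paper offers no proof of this lemma at all: it is stated with the attribution ``Theorem 2.7 of \cite{ChoDongKim2007}'' and the surrounding text says the appendix lemmas ``are either explicitly provided in or can be inferred from'' that reference, so your opening remark that the statement is disposed of by citation matches the paper exactly. Your reconstruction is a reasonable and essentially correct outline, but it is worth noting that it is not quite the route taken in \cite{ChoDongKim2007}: there the Green's function is built directly on the arbitrary open set $\cO$ by solving the weak problem with averaged (mollified) Dirac data and passing to the limit in the mollification parameter, using the $\mathring{\cV}_2^{1,0}$ energy theory plus interior De Giorgi--Nash--Moser estimates, rather than by exhausting $\cO$ with smooth bounded subdomains. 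The exhaustion-plus-monotonicity argument you describe is exactly what the present paper does separately in Lemma \ref{241227626} (to obtain non-negativity and the lower bound in \eqref{250102531}), and your observation that no boundary regularity can be used --- so that continuity off the diagonal, the Gaussian bound, and uniqueness must all come from interior estimates and the uniqueness of weak solutions --- correctly identifies where the work lies in either construction.
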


\begin{lemma}[Theorem 2.7 of \cite{ChoDongKim2007}]\label{250219524}
Green's function $G_{\cO}$ in Lemma \ref{241226452} satisfies the following properties:
\begin{enumerate}[label=(\roman*)]
	\item For any $\phi\in C_c^\infty(\bR\times \cO)$,  
	$$
	\int_{\bR\times \cO}\left(-G_\cO(t,s,x,y)\partial_t\phi(t,x)-\sum_{i,j=1}^d\alpha_{ij}(t)G_\cO(t,s,x,y)D_{ij}\phi(t,x) \right)dxdt=\phi(s,y)\,.
	$$
	In particular,  $L G_{\cO}(\cdot,s,\cdot,y)=0$  on $(s,\infty)\times \cO$ in the sense of distributions (see Definition \ref{250219615}).

	\item For any $\eta\in C_c^\infty(\bR\times \cO)$ satisfying $\eta\equiv 1$ on a neighborhood of $(s,y)$, we have $(1-\eta) G_\cO (\cdot,s,\cdot,y)\in \mathring{\cV}_2^{1,0}(\bR\times \cO)$.
		In particular, $G_\cO (\cdot,s,\cdot,y)\in \mathring{\cV}_2^{1,0}\big((s+\epsilon,\infty)\times \cO\big)$ for all $\epsilon>0$.
	
	\item For any $g\in L^2(\cO)$, the function $u(t,x)$ given by
	$$
	u(t,x):=\int_{\cO}G_\cO(t,s,x,y)g(y) dy\quad\forall\,\,x\in\cO\,,\,\,t>s
	$$
	is the unique weak solution in $\mathring{\cV}^{1,0}_2((s,\infty)\times \cO)$ of the Cauchy problem
	$$
	L u=0 \quad\text{on}\,\,\,\, (s,\infty)\times \cO\quad;\quad u(s,\cdot)=g
	$$
	in the sense that for any $\phi\in C_c^\infty([s,\infty)\times \cO)$ and $t>s$,
	\begin{align*}
		\begin{aligned}
	&\int_{\cO}u(t,x)\phi(t,x)\,dx+\int_{[s,t]\times \cO}\left(-u\partial_{\tau}\phi+\sum_{i,j=1}^d\alpha_{ij}D_{i}uD_{j}\phi\right)dx d\tau\\
	=\,&\int_{\cO}g(x)\phi(s,x)\,dx\,.
	\end{aligned}
	\end{align*}

\end{enumerate}
\end{lemma}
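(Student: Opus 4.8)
The plan is to obtain part~(i) directly from the defining distributional identity in Lemma~\ref{241226452}, and to derive the genuine regularity statements in parts~(ii) and~(iii) from that identity together with the Gaussian upper bound \eqref{250102531}, the Chapman--Kolmogorov identity \eqref{250114346}, interior and boundary Caccioppoli (energy) estimates, and the standard solvability of the Cauchy problem for $L$ in $\mathring{\cV}_2^{1,0}$; all of these are available, the first two from the construction in \cite{ChoDongKim2007} and the rest from \cite[Ch.~III]{LSU_1968}.

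For (i): test $L G_\cO(\cdot,s,\cdot,y)=\delta(\cdot-s)\delta(\cdot-y)$ against $\phi\in C_c^\infty(\bR\times\cO)$ and move $\partial_t$ and each $D_{ij}$ onto $\phi$. This is legitimate because $\alpha_{ij}$ depends on $t$ only (so $D_{ij}$ transposes onto $\phi$ with no extra terms) and $G_\cO(t,\cdot,x,\cdot)$ is locally integrable; the outcome is precisely the stated identity with right-hand side $\phi(s,y)$. Restricting to $\phi$ supported in $(s,\infty)\times\cO$ forces $\phi(s,\cdot)\equiv 0$, hence $L G_\cO(\cdot,s,\cdot,y)=0$ there in the sense of distributions; combined with the membership $G_\cO(\cdot,s,\cdot,y)\in\cV_2^{1,0}((s+\epsilon,\infty)\times\cO)$ from part~(ii), this is exactly the weak formulation in Definition~\ref{250219615}~(i) with $f=0$ (the terminal-time integral there is the device encoding the final value and is produced by one more integration by parts in $t$).

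For (ii): fix $(s,y)$ and a cutoff $\eta\in C_c^\infty(\bR\times\cO)$ with $\eta\equiv 1$ near $(s,y)$, and put $w:=(1-\eta)\,G_\cO(\cdot,s,\cdot,y)$. Away from $(s,y)$ — in particular on $\mathrm{supp}\,\nabla_{t,x}\eta$ — the function $G_\cO(\cdot,s,\cdot,y)$ is a local weak solution of $Lu=0$ (by part~(i), since $\delta_{(s,y)}$ is supported only at $(s,y)$), so by De Giorgi--Nash--Moser it is locally bounded there and by Caccioppoli it lies in $W^{1,0}_{2,\mathrm{loc}}$; hence $Lw=-[L,\eta]\,G_\cO(\cdot,s,\cdot,y)=:f_\eta\in L_2$ has compact support in $\bR\times\cO$, while $w$ vanishes for $t$ below the time-support of $\eta$ and carries the zero Dirichlet trace. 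The cleanest way to conclude is to identify $w$ with the $\mathring{\cV}_2^{1,0}(\bR\times\cO)$-solution obtained from $f_\eta$ via the representation formula of Lemma~\ref{241226452} (uniqueness of weak solutions with vanishing past following from the energy inequality); equivalently, verify the two defining norms of $w$ directly: $\sup_t\|w(t)\|_{L_2(\cO)}<\infty$ using $\|G_\cO(t,s,\cdot,y)\|_{L_2(\cO)}\le N(t-s)^{-d/4}$ (from \eqref{250102531}) and the fact that, away from $y$, $\|G_\cO(t,s,\cdot,y)\|_{L_2(\cO)}\to 0$ as $t\downarrow s$; and $\|\nabla w\|_{L_2(\bR\times\cO)}<\infty$ by splitting $[s,\infty)=[s,s+1]\cup[s+1,\infty)$, applying the energy inequality on $[s+1,\infty)$ to the Cauchy problem with $L_2$-datum $G_\cO(s+1,s,\cdot,y)$ (via \eqref{250114346}) and interior/boundary Caccioppoli with \eqref{250102531} on the compact part away from $(s,y)$. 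Choosing $\eta$ with time-support in $\{t<s+\epsilon\}$ yields the ``in particular'' claim.

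For (iii): given $g\in L_2(\cO)$, let $u$ be the stated integral; the standard theory (\cite[Ch.~III]{LSU_1968}) provides a unique $\tilde u\in\mathring{\cV}_2^{1,0}((s,\infty)\times\cO)$ solving $Lu=0$ on $(s,\infty)\times\cO$ with $u(s,\cdot)=g$ in the weak sense of the statement, so it suffices to show $\tilde u=u$ a.e. For this I would use duality: for $f\in C_c^\infty((s,\infty)\times\cO)$, the backward adjoint problem $-\partial_\tau v-\sum_{i,j}\alpha_{ij}(\tau)D_{ij}v=f$ with $v$ compactly supported in time is solved by $v(\tau,z)=\int G_\cO(\theta,\tau,\cdot,z)\,f(\theta,\cdot)$, using that $G_\cO$ is also the adjoint fundamental solution — the property already invoked through Lemma~\ref{250102534} — and then integration by parts in $(s,\infty)\times\cO$, together with the boundary and terminal vanishing, gives $\int\tilde u\,f=\int_\cO g(y)\,v(s,y)\,dy=\int f\cdot u$; since $f$ is arbitrary, $\tilde u=u$. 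The estimate $\|u(t)\|_{L_2(\cO)}\le\|g\|_{L_2(\cO)}$ and the continuity $u\in C([s,\infty);L_2(\cO))$ with $u(s)=g$ then come from the energy inequality and the $\mathring{\cV}_2^{1,0}$ membership. The main obstacle is the technical input quoted above: that $G_\cO(\cdot,s,\cdot,y)$ is, away from the diagonal, a bona fide local weak solution in $W^{1,0}_{2,\mathrm{loc}}$ with zero boundary trace and with the Gaussian bounds — establishing this from scratch means unwinding the approximation scheme of \cite{ChoDongKim2007} (smoothing the coefficients and exhausting $\cO$), whereas once it is granted everything above is routine bookkeeping with the semigroup identity and the energy inequality.
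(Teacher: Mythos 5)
The paper does not actually prove this lemma: it is quoted as Theorem 2.7 of \cite{ChoDongKim2007} (the sentence preceding it says these facts are ``either explicitly provided in or can be inferred from'' that reference), so there is no internal proof to measure your argument against. Your reconstruction — duality with the representation formula of Lemma \ref{241226452} for (i), a cutoff $w=(1-\eta)G_\cO(\cdot,s,\cdot,y)$ plus interior estimates and the energy inequality for (ii), and identification with the unique $\mathring{\cV}_2^{1,0}$ solution by an adjoint argument for (iii) — is essentially the route taken in the cited construction, and the outline is sound. Two soft spots are worth naming. In (ii), your primary route is circular: you cannot invoke uniqueness of weak solutions with vanishing past to identify $w$ with the $\mathring{\cV}_2^{1,0}(\bR\times\cO)$ solution before knowing that $w$ lies in a class where that uniqueness theorem applies; and your fallback (directly bounding $\sup_t\|w(t)\|_{L_2}$ and $\|\nabla w\|_{L_2}$ via \eqref{250102531}, \eqref{250114346} and Caccioppoli) controls the $\cV_2$ norms but does not by itself yield membership in the \emph{ring} space $\mathring{\cV}_2^{1,0}$, i.e.\ approximability in $W_2^{1,0}$ by elements of $C_c^\infty([a,b]\times\cO)$, nor the $C([a,b];L_2)$ continuity; these are precisely what the exhaustion/approximation scheme of \cite{ChoDongKim2007} supplies (compare Lemma \ref{241227626}), and you rightly flag at the end that you are deferring it. In (iii), the integration by parts against the backward adjoint solution $v(\tau,z)=\int G_\cO(\theta,\tau,\cdot,z)f(\theta,\cdot)$ needs justification, since $v$ is neither smooth nor compactly supported in space; one must either mollify or pass through bounded subdomains. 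With those points delegated to the cited theorem — which is exactly what the paper itself does wholesale — your argument is an acceptable account of why the lemma holds.
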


\begin{lemma}\label{250102534}
	Green's function $G_{\cO}$ in Lemma \ref{241226452} satisfies the following properties:
	
	\begin{enumerate}[label=(\roman*)]
		\item There exist constants $N,\,\sigma>0$ depending only on $d$, $\nu_1$, $\nu_2$ such that for any $s < t$ and $x, y \in \cO$,
		\begin{align}\label{250102531}
			0 \leq G_\cO(t, s, x, y) \leq N(t-s)^{-d/2}e^{-\sigma|x-y|^2/(t-s)}.
		\end{align}
		
		\item For any $s < t$ and $x, y \in \cO$, $G_\cO(t, s, x, y) = \widehat{G}_\cO(-s, -t, y, x)$, where $\widehat{G}_\cO$ is the Green's function of the operator $\tilde{L}$ on $\mathbb{R} \times \cO$.
	In particular,  $\tilde{L} G_\cO(t,-\,\cdot,x,\cdot)=0$  on $(-t,\infty)\times \cO$ in the sense of distributions (see Definition \ref{250219615}).

		\item For any $s < r < t$ and $x, y \in \cO$,
		\begin{align}\label{250114346}
			\int_\cO G_\cO(t, r, x, z)G_\cO(r, s, z, y)dz = G_\cO(t, s, x, y).
		\end{align}
	\end{enumerate}

\end{lemma}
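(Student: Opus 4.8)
The plan is to derive the three assertions from results already in place: part~(i) from Aronson's Gaussian bound on $\bR^d$ together with domain monotonicity, part~(ii) from the uniqueness part of Lemma~\ref{241226452} via a space–time duality identity, and part~(iii) from the uniqueness of the Cauchy problem in Lemma~\ref{250219524}(iii). For~(i), nonnegativity of $G_\cO$ follows from the comparison principle for $L$ (equivalently, for $f\geq 0$ the function $u(t,x)=\int G_\cO(t,s,x,y)f(s,y)\,dy\,ds$ is nonnegative), or from the probabilistic representation in Remark~\ref{probability}. For the upper bound, recall that the Green's function $\Gamma=G_{\bR^d}$ of $L$ with merely time-measurable coefficients satisfies the classical two-sided Aronson estimate $\Gamma(t,s,x,y)\leq N(t-s)^{-d/2}e^{-\sigma|x-y|^2/(t-s)}$, as recorded in \cite{ChoDongKim2007,KimXu2022}. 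Domain monotonicity then yields $0\leq G_\cO\leq\Gamma$: for $g\in L^2(\bR^d)$ with $g\geq 0$ the function $x\mapsto \int_\cO G_\cO(t,s,x,y)g(y)\,dy$, extended by zero outside $\cO$, is a nonnegative weak subsolution on $\bR^d$ of the Cauchy problem with data $g\mathbf 1_\cO$, hence is dominated a.e.\ by $\int_{\bR^d}\Gamma(t,s,x,y)g(y)\,dy$; letting $g$ run over approximate identities gives the pointwise inequality off the diagonal, where both kernels are continuous.

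For~(ii), let $\widehat G_\cO$ be the (unique) Green's function of $\tilde L$ from Lemma~\ref{241226452}. Fix $f,g\in C_c^\infty(\bR\times\cO)$ and set $u(t,x)=\int G_\cO(t,s,x,y)f(s,y)\,dy\,ds$ and $v(\tau,z)=\int\widehat G_\cO(\tau,\sigma,z,w)g(\sigma,w)\,dw\,d\sigma$; both belong to $\mathring{\cV}_2^{1,0}(\bR\times\cO)$ and solve $Lu=f$, $\tilde Lv=g$ in the sense of Definition~\ref{250219615}. Since the substitution $\tau\mapsto -\tau$ turns $\tilde L$ into the formal adjoint of $L$ in the space–time variables, pairing $Lu=f$ against $v(-\,\cdot\,,\,\cdot\,)$ and integrating by parts over $\bR\times\cO$ yields $\iint u(\tau,z)\,g(-\tau,z)\,dz\,d\tau=\iint v(-s,y)\,f(s,y)\,dy\,ds$; expanding both sides in terms of the kernels and using that $f,g$ are arbitrary gives $G_\cO(t,s,x,y)=\widehat G_\cO(-s,-t,y,x)$ for a.e.\ $(t,s,x,y)$, hence everywhere on $\{s<t\}$ by continuity off the diagonal. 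The ``in particular'' claim then follows by applying Lemma~\ref{250219524}(i) to $\widehat G_\cO$. The step I expect to be the main obstacle is making this duality argument rigorous with only $\mathring{\cV}_2^{1,0}$ regularity and time-measurable coefficients — specifically, justifying the space–time integration by parts and the vanishing of boundary terms as $t\to\pm\infty$; both are controlled by the compact time-support of $f,g$ and the Gaussian bound~(i), which force $u$ and $v$ to decay, and alternatively the whole of~(ii) can be quoted from the duality statements already established in \cite{ChoDongKim2007,KimXu2022}, as indicated just before Lemma~\ref{241226452}.

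For~(iii), fix $s<r$ and $y\in\cO$, and put $g:=G_\cO(r,s,\,\cdot\,,y)$, which lies in $L^2(\cO)$ by part~(i). By Lemma~\ref{250219524}(ii) the function $w(t,x):=G_\cO(t,s,x,y)$ belongs to $\mathring{\cV}_2^{1,0}((r,\infty)\times\cO)$ and, by Lemma~\ref{250219524}(i), satisfies $Lw=0$ on $(r,\infty)\times\cO$; the embedding $\mathring{\cV}_2^{1,0}((r,\infty)\times\cO)\hookrightarrow C([r,\infty);L^2(\cO))$ together with the continuity of $G_\cO$ off the diagonal identifies the trace $w(r,\cdot)=g$. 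Hence $w$ is the unique weak solution in $\mathring{\cV}_2^{1,0}((r,\infty)\times\cO)$ of the Cauchy problem with data $g$ at time $r$, and Lemma~\ref{250219524}(iii) represents it as $w(t,x)=\int_\cO G_\cO(t,r,x,z)g(z)\,dz=\int_\cO G_\cO(t,r,x,z)G_\cO(r,s,z,y)\,dz$, which is exactly \eqref{250114346}.
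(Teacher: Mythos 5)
Your proposal is correct in substance, but note that the paper itself proves very little here: the Gaussian upper bound in (i) is quoted from \cite[Theorem 3.1]{KimXu2022}, parts (ii) and (iii) are quoted from \cite[(4.28) and Remark 2.12]{ChoDongKim2007}, and the only assertion the paper actually argues is the nonnegativity $0\le G_\cO$, which it obtains from the exhaustion Lemma \ref{241227626} (maximum principle of \cite[Theorem 7.1]{LSU_1968} on the bounded domains $\cO_n$, then $G_{\cO_n}\nearrow G_\cO$). Your arguments for (ii) (space--time duality: pair $Lu=f$ against $v(-\,\cdot\,,\cdot)$ with $\tilde{L}v=g$ and integrate by parts) and for (iii) (uniqueness of the Cauchy problem via Lemma \ref{250219524}(iii), with the trace $w(r,\cdot)=G_\cO(r,s,\cdot,y)$ identified through continuity off the diagonal and the Gaussian bound) are the standard proofs and are essentially what the cited references carry out, so they constitute a legitimate, more self-contained alternative. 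The one place where your route is genuinely weaker than the paper's is nonnegativity: invoking ``the comparison principle for $L$'' directly on $\cO$ is delicate when $\cO$ is unbounded (the maximum principle the paper relies on is stated for bounded domains, which is exactly why Lemma \ref{241227626} exhausts $\cO$ from inside), and the probabilistic representation of Remark \ref{probability} is only available under the extra hypothesis that the coefficients are continuous in $t$, so neither of your two suggested shortcuts covers the general case without further work. Your domain-monotonicity derivation of the upper bound from Aronson's estimate for $G_{\bR^d}$ (zero extension of a nonnegative solution with zero Dirichlet data is a subsolution on $\bR^d$) is sound and arguably more elementary than citing \cite{KimXu2022}, though it presupposes the nonnegativity just discussed.
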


The second inequality in \eqref{250102531} is given in \cite[Theorem 3.1]{KimXu2022}, and the claims in (ii) and (iii) are proved in \cite[(4.28) and Remark 2.12]{ChoDongKim2007}.
The first inequality in \eqref{250102531} can be derived from Lemma \ref{241227626} below.
To verify this, take $\cO_n$ as a connected component of $\cO \cap B_n(0)$ such that $\cO_n \subset \cO_{n+1}$ for all $n \in \mathbb{N}$. 
Then these domains $\cO_n$ satisfy the assumption in Lemma \ref{241227626}. 
Therefore, the first inequality in \eqref{250102531} follows directly from Lemma \ref{241227626}.

\begin{lemma}\label{241227626}
	Let $\cO_n$, $n\in N$, be bounded domains such that $\cO_1\subset \cO_2\subset \cdots$, and $\bigcup_{n\in \bN}\cO_n=\cO$.
	Then for any $s<t$, 
	$$
	0\leq G_{\cO_n}(t,s,x,y)\nearrow G_\cO(t,s,x,y)
	$$
	for almost all $(x,y)\in \cO\times \cO$.
\end{lemma}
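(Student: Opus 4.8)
The plan is to establish the monotonicity $0\le G_{\cO_n}(t,s,\cdot,\cdot)\le G_{\cO_{n+1}}(t,s,\cdot,\cdot)$ a.e.\ (for each fixed $s<t$) by a maximum-principle comparison of the corresponding solution operators, to observe that the monotone limit $\bar G(t,s,x,y):=\lim_n G_{\cO_n}(t,s,x,y)$ is finite by the domain-independent upper bound in \eqref{250102531}, and then to identify $\bar G$ with $G_\cO$ by checking that $\bar G$ reproduces the unique weak solution of the homogeneous Cauchy problem on $(s,\infty)\times\cO$ supplied by Lemma \ref{250219524}(iii).

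For the monotonicity I would first record that $G_{\cO_n}\ge 0$ a.e.: for $g\in L_2(\cO_n)$ with $g\ge 0$ the weak solution $u=\int_{\cO_n}G_{\cO_n}(\cdot,s,\cdot,y)g(y)\,dy$ of $Lu=0$, $u(s)=g$ given by Lemma \ref{250219524}(iii) is nonnegative by the classical energy argument (test with $-u^-$ and use the uniform parabolicity; cf.\ \cite[Chapter III]{LSU_1968}). Then, for $g\in C_c^\infty(\cO_n)$ with $g\ge 0$, I would compare $v^{(n)}:=\int_{\cO_n}G_{\cO_n}(\cdot,s,\cdot,y)g(y)\,dy$ with $v^{(n+1)}:=\int_{\cO_{n+1}}G_{\cO_{n+1}}(\cdot,s,\cdot,y)g(y)\,dy$: viewed on $(s,\infty)\times\cO_n$, the difference $w:=v^{(n+1)}-v^{(n)}$ solves $Lw=0$ with $w(s)=0$, and $w^-\le v^{(n)}$ (since $v^{(n+1)}\ge 0$ and $v^{(n)}|_{\partial\cO_n}=0$) belongs to $\mathring W_2^{1,0}((s,\infty)\times\cO_n)$, so the energy identity forces $w^-\equiv 0$, i.e.\ $v^{(n)}\le v^{(n+1)}$. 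Letting $g$ range over a countable family dense in the nonnegative cone of $C_c(\cO)$ then gives $0\le G_{\cO_n}(t,s,\cdot,\cdot)\le G_{\cO_{n+1}}(t,s,\cdot,\cdot)$ a.e.\ on $\cO\times\cO$; since the second inequality in \eqref{250102531} holds with the same constants on every domain, $\bar G$ is finite a.e.\ and the monotone limit makes sense.

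The crux is the identification $\bar G=G_\cO$. I would fix $g\in C_c^\infty(\cO)$ with $g\ge 0$, choose $n_0$ with $\mathrm{supp}\,g\subset\cO_{n_0}$, and for $n\ge n_0$ put $v^{(n)}:=\int_\cO G_{\cO_n}(\cdot,s,\cdot,y)g(y)\,dy$, extended by zero. Each $v^{(n)}$ lies in $\mathring{\cV}_2^{1,0}((s,\infty)\times\cO)$ and solves $Lv^{(n)}=0$ on $(s,\infty)\times\cO_n$ with $v^{(n)}(s)=g$, and by the energy estimate — whose constant depends only on $\nu_1,\nu_2$ — the family $\{v^{(n)}\}$ is bounded in $\mathring W_2^{1,0}((s,\infty)_T\times\cO)$ for every $T$. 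By the monotonicity above, $0\le v^{(n)}\nearrow \bar v$ pointwise, with $\bar v(t,x)=\int_\cO\bar G(t,s,x,y)g(y)\,dy$; using the Gaussian convolution $h(t,x):=N\int_\cO(t-s)^{-d/2}e^{-\sigma|x-y|^2/(t-s)}g(y)\,dy$ from \eqref{250102531} as a dominating function (it is in $L_2((s,T)\times\cO)$ and in $L_2(\cO)$ for each fixed time), I get $v^{(n)}\to\bar v$ in $L_2((s,T)\times\cO)$, $v^{(n)}(t',\cdot)\to\bar v(t',\cdot)$ in $L_2(\cO)$ for every $t'>s$, and $v^{(n)}\rightharpoonup\bar v$ weakly in $\mathring W_2^{1,0}((s,\infty)_T\times\cO)$. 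Passing to the limit in the weak formulation of Lemma \ref{250219524}(iii) for $\cO_n$, tested against $\phi\in C_c^\infty([s,\infty)\times\cO_m)$ with $m$ fixed and $n\ge m$ (admissible since $\cO_m\subset\cO_n$; the only time-slice term is $\int_{\cO_m}v^{(n)}(t',\cdot)\phi(t',\cdot)$, which converges by the $L_2(\cO)$-convergence), I obtain that $\bar v$ satisfies the same weak formulation on $(s,\infty)\times\cO$ with $\bar v(s)=g$. Since every test function in $C_c^\infty([s,\infty)\times\cO)$ is supported in some $[s,\infty)\times\cO_m$, and since $\bar v\in C([s,\infty);L_2(\cO))$ by standard parabolic regularity — so that $\bar v\in\mathring{\cV}_2^{1,0}((s,\infty)\times\cO)$ — the uniqueness in Lemma \ref{250219524}(iii) yields $\bar v=\int_\cO G_\cO(\cdot,s,\cdot,y)g(y)\,dy$. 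Evaluating at the fixed $t$ gives $\int_\cO\bar G(t,s,x,y)g(y)\,dy=\int_\cO G_\cO(t,s,x,y)g(y)\,dy$ for a.e.\ $x$; running $g$ through a countable dense family and using $0\le\bar G\le G_\cO$ a.e.\ (the upper bound again from \eqref{250102531}), I conclude $\bar G(t,s,\cdot,\cdot)=G_\cO(t,s,\cdot,\cdot)$ a.e.\ on $\cO\times\cO$, which is the assertion.

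I expect the main obstacle to be the bookkeeping between the exhausting domains: the zero-extension of $v^{(n)}$ solves the equation only on $\cO_n$, not on $\cO$, so one cannot simply substitute a fixed $C_c^\infty([s,\infty)\times\cO)$ test function into ``the $\cO_n$-problem''; this is handled by testing against functions supported in a fixed $\cO_m$ and sending $n\to\infty$ before $m\to\infty$. A related subtlety is that the lemma asks for convergence at the single fixed time $t$ rather than for a.e.\ $t$, which is why the monotone (hence pointwise-in-time) $L_2(\cO)$-convergence $v^{(n)}(t',\cdot)\to\bar v(t',\cdot)$ and the $C([s,\infty);L_2(\cO))$-continuity of the limiting weak solution are used, rather than merely extracting a weakly convergent subsequence in space-time.
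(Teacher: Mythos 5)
Your proposal is correct and follows essentially the same route as the paper's proof: nonnegativity and monotonicity of $G_{\cO_n}$ via a maximum-principle comparison, uniform energy bounds permitting passage to the limit in the weak formulation tested against functions supported in a fixed $[s,\infty)\times\cO_m$, and identification of the limit through the uniqueness statement of Lemma \ref{250219524}(iii). The only cosmetic differences are that the paper invokes the maximum principle of \cite{LSU_1968} directly instead of running the energy comparison for $w^-$ by hand, and secures the needed convergences via the monotone convergence theorem together with a diagonal weak-compactness argument rather than your Gaussian dominating function.
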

\begin{proof}
	Recall that $\cO_n$, $n\in\bN$, are bounded.
	By the maximum principle (\cite[Theorem 7.1]{LSU_1968}),  for any $n<m$ and non-negative function $g\in C_c^{\infty}(\cO_n)$, we have
	$$
	0\leq \int_{\cO}G_{\cO_n}(t,s,x,y)g(y)dy\leq \int_{\cO}G_{\cO_{m}}(t,s,x,y)g(y)dy\,.
	$$
	This implies that $G_{\cO_n}(t,s,x,y)$ is non-negative and increasing as $n\rightarrow \infty$.
	Denote $\widetilde{G}_{\cO}(t,s,x,y)$ as the limit of $G_{\cO_n}(t,s,x,y)$ as $n\rightarrow \infty$.
	 We claim  that for any given $g\in C_c^\infty(\cO)$ with $g\geq 0$, and $t>s$,
	\begin{align}\label{241226514}
		\int_{\cO}\widetilde{G}_{\cO}(t,s,x,y)g(y)dy=:u_{\infty}(t,x)=\int_{ \cO}G_{\cO}(t,s,x,y)g(y)dy\,,
	\end{align}
	for almost all $x\in \cO$.
	\eqref{241226514}  obviously implies that $G_\cO(t,s,\cdot,\cdot)=\widetilde{G}_{\cO}(t,s,\cdot,\cdot)$ almost everywhere on $\cO\times \cO$, which completes the proof.
	
	To prove the claim, we take $n_0\in\bN$ such that $\mathrm{supp}(g)\subset \cO_{n_0}$.
	For $n\geq n_0$, put
	$$
	u_n(t,x):=\int_{\cO_n}G_{\cO_n}(t,s,x,y)g(y)dy\,.
	$$
	By the energy inequality (see, \textit{e.g.}, \cite[Theorem 2.1]{LSU_1968} or \cite[Lemma 2.7]{KimXu2022}), we have
	$$
	\|u_n\|_{\mathring{\cV}_2^{1,0}((s,\infty)\times \cO_n)}:=\sup_{t\in [s,\infty)}\|u_n(t)\|_{L_2(\cO_n)}+\|\nabla u_n\|_{L_2((s,\infty)\times \cO_n)}\leq N_0\|g\|_{L_2(\cO)}\,,
	$$
	where $N_0$ is a constant independent of $n$.
	Therefore, by the monotone convergence theorem, 
	\begin{align}\label{241227734}
		\sup_{t\in [s,\infty)}\|u_\infty(t,\cdot)\|_{L_2(\cO)}\leq N_0\|g\|_{L_2(\cO)}\,.
	\end{align}
	For each $i\in\bN$, the sequence $(u_n)_{n\in\bN}$ (each $u_n$ is extended as zero on $\cO\setminus \cO_n^c$) is bounded in the Hilbert space $\mathring{W}_2^{1,0}((s,s+i)\times \cO)$.
	By a diagonal argument, it follows that there exists a subsequence $n_k$ and a function $v\in \bigcap_{i\in\bN}\mathring{W}_2^{1,0}((s,s+i)\times \cO)$ such that, for any $i\in\bN$, $u_{n_k}$ weakly converges to $v$ in $\mathring{W}_2^{1,0}((s,s+i)\times \cO)$.
	This also implies that for any $i\in\bN$, $u_{n_k}$ weakly converges to $v$ in $L_2((s,s+i)\times \cO)$ and $\nabla u_{n_k}$ weakly converges to $\nabla v$ in $L_2((s,s+i)\times \cO;\bR^d)$.
	Hence, $v(t)=u_\infty(t)$ for almost every $t\in (s,\infty)$, so that $u_\infty\in \bigcap_{i\in\bN}\mathring{W}_2^{1,0}((s,s+i)\times \cO)$.
	Moreover,
	\begin{align*}
		\|\nabla u_\infty\|_{L_2((s,\infty)\times \cO_n)}\leq \lim_{i\rightarrow \infty}\liminf_{k\rightarrow \infty}\|\nabla u_{n_k}\|_{L_2((s,s+i)\times \cO_n)}\leq N_0\|g\|_{L_2(\cO)}\,.
	\end{align*}
	Due to \eqref{241227734} and the above, $u_\infty$ belongs to $\mathring{\cV}_2((s,\infty)\times \cO)$.
	
	Let us fix $\phi\in C_c^{\infty}([s,\infty)\times \cO)$, and take $n_1\geq {n_0}$ such that $\textrm{supp}(\phi)\subset \bR\times \cO_{n_1}$.
	Then for any $n\in\bN$ with $n\geq n_1$, and $t_1>s$,
	\begin{align}\label{241226444}
		\begin{aligned}
			&\int_{\cO_n}u_n(t_1,x)\phi(t_1,x)\,dx+\int_{[s,t_1]\times \cO_n}\left(-u_n\partial_t\phi+\alpha_{ij}D_{i}u_nD_{j}\phi\right)dxdt\\
			=\,&\int_{\cO_n}g(x)\phi(s,x)\,dx\,.
		\end{aligned}
	\end{align}
	The dominated convergence theorem ensures that \eqref{241226444} also holds with $u_\infty$ and $\cO$ in place of $u_n$ and $\cO_n$, respectively.
	Therefore, by \cite[Theorem 4.2]{LSU_1968} (whose proof is valid even if $\cO$ is unbounded), we have $u_\infty\in \mathring{\cV}_2^{1,0}([s,\infty)\times \cO)$.
	Finally, \eqref{241226514} follows from Lemma \ref{250219524}.(iii), and the proof is completed.
\end{proof}

\begin{remark}
	\label{probability}
	We now give  a probabilistic view on the Green's function on domains $\cO$ satisfying (A)-condition (see \eqref{250102420}).  We only consider the case
	$0< s<t$ with the  additional assumption that  $\alpha_{ij}$ are continuous in $t$.  For a given  $g \in C_c(\cO)$,   let $u(s,y)$ be the weak solution to 
	$$
	\frac{\partial u}{\partial s}+\sum_{i,j=1}^d\alpha_{ij}(s)u_{y^iy^j}=0, \quad s<t; \quad u(t,\cdot)=g(\cdot),\,\, u|_{(-\infty,t)\times \partial \cO}=0.
	$$
	Then $u(\cdot,\cdot)$ is continuous in $(-\infty, t]\times \bar{\cO}$,   $u(\cdot, y)$ is continuously differentiable in $ (-\infty,t)$, and  $u(s,\cdot)$ is twice continuously differentiable in $\cO$  (see. e.g. \cite[Chapter III]{LSU_1968}).
	Note that $v(s,y):=u(-s,y)$ satisfies 
	$$
	\tilde{L}v:=\frac{\partial u}{\partial s}-\sum_{i,j=1}^d\alpha_{ij}(-s)u_{y^iy^j}=0\quad;\quad \quad v(-t,\cdot)=g(\cdot),\,\, v|_{(-t,\infty)\times \partial \cO}=0.
	$$ 
	By Lemmas \ref{250219524}.(iii) and \ref{250102534}.(ii),
	\begin{align}\label{2502201146}
		u(s,y)=v(-s,y)=\int_\cO \hat{G}_\cO(-s,-t,y,x)g(x) dx=\int_\cO G_\cO(t,s,x,y)g(x) dx\,.
	\end{align}

	For   $s>0$ and $y\in \cO$,  we consider 
	$$\xi^{s,y}_{r}:=y+\int^{r}_s \sigma(u)dw_{u}, \quad r\geq s,
	$$
	where $\sigma$ is a $d\times d$ matrix satisfying  $\frac{1}{2} \sigma \sigma^{T}=(\alpha_{ij})_{1\leq i,j\leq d}$ and 
	$w_{r}=(w^1_{r}, w^2_{r}, \ldots,w^d_{r})$ is a $d$-dimensional Wiener process. We note that the $i$-th component ($i=1,\ldots,d$) of $\xi^{s,y}_r$ is $y_i+\sum_{k=1}^d \int^{r}_s \sigma_{ik}(u)dw^k_{u}$. 
	
	Define the stopping times
	$$\tilde{\tau}:=\inf\{r: (r,\xi^{s,y}_r )\not\in (0,t)\times \cO\}  
	$$
	and
	$$  \tau=\tau({s,y}):=\inf\{r \geq s: \xi^{s,y}_{r}\not\in \cO\}.
	$$
	Note $\tilde{\tau}=t\wedge \tau$. We also define  the killed process
	$$
	\xi^{\tau,s,y}_r:= \begin{cases}  \xi^{s,y}_r &: r<\tau \\
		\partial &:  r\geq \tau
	\end{cases}
	$$
	where $\partial \not\in \bR^d$ is a cemetery point and we define $g(\partial )=0$ for any function $g$. By It\^o's formula, for all $r\leq \tilde{\tau}$ (a.s.),
	$$
	u(r,\xi^{s,y}_r)=u(s,y)+\int^r_s L u (\kappa,\xi^{s,y}_{\kappa}) d\kappa + \sum_{i,k=1}^d
	\int^r_s \sigma^{ik}(\kappa)u_{y^i}(\kappa,\xi^{s,y}_{\kappa})dw^k_{\kappa}.
	$$
	Therefore, 
	\begin{eqnarray*}
		u(s,y)=\bE u(\tilde{\tau}, \xi^{s,y}_{\tilde{\tau}})
		&=&\bE u(t,\xi^{s,y}_t)1_{t< \tau} +\bE u(\tau,\xi^{s,y}_\tau)1_{t\geq  \tau} \\
		&=&\bE g(\xi^{s,y}_t)1_{t< \tau}=\bE g(\xi^{\tau,s,y}_t).
	\end{eqnarray*} 
	The third equality above is due to the fact that  $\xi^{s,y}_{\tau}\in \partial \cO$ and $u|_{\partial \cO}=0$.  By \eqref{2502201146},  for any  $g\in C_c(\cO)$, 
	$$
	u(s,y)=\bE g(\xi^{\tau,s,y}_t)=\int_{\cO} G_\cO(t,s,x,y)g(x)dx,  \quad t > s \geq 0,
	$$
	and by the dominating convergence theorem for any compact set $K\subset \cO$, we have 
	$$
	\bP ( \xi^{\tau,s,y}_t\in K)=\int_K G_{\cO} (t,s,x,y)dx.
	$$
	We finally conclude that  the probability density function of $\xi^{\tau,s,y}_t$  is equal to  $G_{\cO}(t,s,\cdot,y)$. 
\end{remark}

\mysection{Proofs of Propositions \ref{2501181139}.($i$) and \ref{250213305}.($i$)}\label{sec:appendix A}
The following definition generalizes \eqref{250102420}.
\begin{defn}\label{A}
For $\Gamma'\subset \bR\times \partial \cD$ we say that $\Gamma'$ satisfies (A)-condition if there exist positive constants $\theta_0<1$, $\rho_0$ such that
$$
|Q^{\cD}_{\rho}(\tau,\xi)|_{d+1}\le (1-\theta_0)
|Q_{\rho}(\tau,\xi)|_{d+1}
$$
for any $(\tau,\xi)\in \Gamma'$ and $\rho\le \rho_0$, where $|\cdot|_{d+1}$ denotes the Lebesgue measure on $\bR^{d+1}$.
\end{defn}

\begin{thm}\label{A2}
 Under the same conditions of  Definition \ref{def.critical lambda.edges.} the constants $\hat{\lambda}^{\pm}_{o}(\cD,\cL)$ defined in Definition \ref{def.critical lambda.edges.} (i) are strictly positive. 
\end{thm}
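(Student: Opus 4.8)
The plan is to reduce the positivity of $\hat{\lambda}^{\pm}_{o}(\cD,\cL)$ to a boundary De Giorgi--Nash--Moser (Hölder) estimate for parabolic equations with time-measurable coefficients, which is available on any domain satisfying the (A)-condition with \emph{no} regularity hypothesis on the boundary. Recall from the proof of Proposition~\ref{2501181139} that $\bR\times\cD$ satisfies \eqref{250102420}, i.e.\ the (A)-condition of Definition~\ref{A} holds at every point of $\bR\times\partial\cD$ --- in particular at the vertex $(t_0,V)$ --- with constants depending only on $\cD$; since $\cD$ is a cone, this density bound in fact holds for all radii. This is precisely the hypothesis underlying \cite[Sections III.8, III.10]{LSU_1968} (equivalently, the argument in \cite[Theorem~2.4.1]{Kozlov_Nazarov_2014}), so those results apply verbatim to $\cL$ on $\cD$.

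First I would reduce, by the translation $t\mapsto t-t_0$ and the parabolic dilation $(t,x)\mapsto (r^2 t, r x)$ --- under which $\cD$ is unchanged and the class of operators \eqref{our operator}--\eqref{uniform parabolicity} together with the (A)-constants is preserved --- to the case $t_0=0$, $r=1$; we may also assume $\sup_{Q_1^{\cD}(0,\mathbf{0})}|u|<\infty$, since otherwise \eqref{23081054111} is trivial. By interior parabolic regularity $u$ is smooth in $x$ and Lipschitz in $t$ on $Q_1^{\cD}(0,\mathbf{0})$, and by the boundary estimate recalled above (applied with center the vertex) there are constants $\beta=\beta(\nu_1,\nu_2,\cD)\in(0,1)$ and $N=N(\nu_1,\nu_2,\cD)>0$ such that $u$ extends continuously to $(\bR\times\overline{\cD})\cap Q_{1/2}(0,\mathbf{0})$ and
$$
|u(t,x)-u(t',x')|\le N\Big(|x-x'|+|t-t'|^{1/2}\Big)^{\beta}\sup_{Q_1^{\cD}(0,\mathbf{0})}|u|
$$
for all $(t,x),(t',x')\in Q_{1/2}^{\cD}(0,\mathbf{0})$. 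Since $u|_{(\bR\times\partial\cD)\cap Q_1(0,\mathbf{0})}=0$ and $u$ is continuous up to the boundary, $u(t,V)=0$ for every $t\in(-1,0]$; taking $(t',x')=(t,V)$ above yields $|u(t,x)|\le N\,|x|^{\beta}\sup_{Q_1^{\cD}}|u|=N\,d(x,V)^{\beta}\sup_{Q_1^{\cD}}|u|$ on $Q_{1/2}^{\cD}(0,\mathbf{0})$. Undoing the dilation gives \eqref{23081054111} with $\lambda=\beta$, hence $\hat{\lambda}^{+}_{o}(\cD,\cL)\ge\beta>0$. Because $\tilde{\cL}$ in \eqref{another operator} has coefficients $a_{ij}(-t)$ satisfying \eqref{uniform parabolicity} with the same $\nu_1,\nu_2$ and acts on the same $\cD$, the identical argument gives $\hat{\lambda}^{-}_{o}(\cD,\cL)\ge\beta>0$.

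The only genuinely delicate point is that the boundary Hölder estimate must be invoked \emph{at the vertex} $V$, which is a singular point of $\partial\cD$; this is legitimate precisely because the De Giorgi--Nash--Moser machinery requires only the measure-density condition \eqref{250102420}, which holds uniformly at $V$ by the polyhedral-cone geometry (cf.\ \cite[Proposition~5.24]{Seo202411}), and not any smoothness of $\partial\cD$. One could instead iterate the geometric oscillation-decay lemma of \cite{Kozlov_Nazarov_2014} on the dyadic cylinders $Q_{2^{-k}}^{\cD}(t_0,\mathbf{0})$, but then extra care is needed to cover the portion of $Q_{r/2}^{\cD}(t_0,\mathbf{0})$ on which the time variable is far from $t_0$; combining the one-shot Hölder estimate with $u(t,V)\equiv 0$ avoids this bookkeeping, so I expect that to be the cleanest route.
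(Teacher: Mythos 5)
Your proposal is correct and follows essentially the same route as the paper: both reduce by parabolic scaling to a unit cylinder, invoke the boundary H\"older estimate of \cite[Theorem III.10.1]{LSU_1968} (valid under only the (A)-condition \eqref{250102420}, which the cone satisfies uniformly, including at the vertex), and then combine the H\"older seminorm bound with $u(\cdot,V)=0$ to extract the decay $|u(t,x)|\le N|x|^{\beta}\sup|u|$, concluding $\hat{\lambda}^{\pm}_{o}\ge\beta>0$. The paper organizes this via the set $\Gamma'$ on which $u$ vanishes and a separate normalization step, but the substance is identical.
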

\begin{proof} We will utilize Theorem 10.1 in Chapter III of \cite{LSU_1968} and  we only show $\hat{\lambda}^{+}_{o}(\cD,\cL)>0$ since the other case similarly follows.

\textbf{Step 1.} First, we consider $Q_1^{\cD}(0,\bf{0})$, and $\Gamma ':=(\bR\times\partial\cD)\cap \{Q_1(0,{\bf{0}})\setminus Q_{3/4}(0,{\bf{0}})\}$. We note that $\Gamma'$ satisfies (A)-condition 
by the cone structure of our domain $\cD$. Moreover, we have $d(\Gamma', Q^{\cD}_{1/2}(0,{\bf{0}}))=3/4-1/2=1/4$.

Let $\hat{\cL}$ be {\emph{any}} operator of the form
$$
\hat{\cL} v(\tau,\zeta)=\frac{\partial v(\tau,\zeta)}{\partial \tau}-\sum^{3}_{i,j=1}\hat{a}_{ij}(t)D_{\zeta_i\zeta_j} v(\tau,\zeta)
$$
with the condition
$$
\nu_1 |\xi|^2\le \sum_{i,j=1}^3\hat{a}_{ij}(\tau)\xi_i\xi_j\le \nu_2|\xi|^2,\quad \xi\in \bR^3,
$$
where $\nu_1,\nu_2$ are the constants in the uniform parabolicity condition \eqref{uniform parabolicity}  for our fixed operator $\cL$.  We then consider functions
$v\in\mathcal{V}(Q_1^{\cD}(0,{\bf{0}}))$ satisfying $\hat{\cL} v=0$ in $Q_1^{\cD}(0,{\bf{0}})$ with $v|_{(\bR\times\partial\cD)\cap Q_1(0,{\bf{0}})}=0$ and $\sup_{Q_1^{\cD}(0,\bf{0})}|v|=1$. Note that any such $v$ belongs to $H^{\alpha/2,\alpha}(\Gamma')$ for any $\alpha\in(0,1)$ since $v=0$ on $\Gamma '$. 

By Theorem 10.1 in Chapter III of \cite{LSU_1968},   there is a constant $\lambda_1>0$ depending only on $\nu_1,\nu_2, \rho_0$ and $\theta_0$  such that 
$$|v|_{H^{\lambda_1/2,\lambda_1}(Q^{\cD}_{1/2}(0,{\bf{0}}))}\leq N(\nu_1,\nu_2, \rho_0,\theta_0),
$$
 where $\rho_0,\theta_0$ are from  Definition \ref{A} and chosen for our $\Gamma'$.
In particular, there exists a constant $N=N(\nu_1,\nu_2,\rho_0,\theta_0)$ such that
$$
\sup_{(\tau,\zeta)\in Q^{\cD}_{1/2}(0,{\bf{0}})}\frac{|v(\tau,\zeta)-v(\tau,{\bf{0}})|}{|\zeta-{\bf{0}}|^{\lambda_1}}\le N
$$
or as $v(\tau,{\bf{0}})=0$
$$
|v(\tau,\zeta)|\le N|\zeta|^{\lambda_1},\quad (\tau,\zeta)\in Q^{\cD}_{1/2}(0,{\bf{0}}).
$$
 
\textbf{Step 2.} If we have the same situation of Step 1 except that the condition $\sup_{Q_1^{\cD}(0,\bf{0})}|v|=1$ is replaced by $M:=\sup_{Q_1^{\cD}(0,\bf{0})}|v|>0$, we consider  the function $\frac{1}{M} v$ to apply Step 1 and have
$$
|v(\tau,\zeta)|\le N|\zeta|^{\lambda_1}\sup_{Q_1^{\cD}(0,\bf{0})}|v|,\quad (\tau,\zeta)\in Q^{\cD}_{1/2}(0,{\bf{0}})
$$
; note that this estimate trivially holds when $M=0$.

\textbf{Step 3.} Now, we consider any  $t_0\in\bR$,  $r>0$, $Q_r^{\cD}(t_0,\bf{0})$,  and  functions
$u\in\mathcal{V}(Q_r^{\cD}(t_0,\bf{0}))$ satisfying $\cL u=0$ in $Q_r^{\cD}(t_0,\bf{0})$ with $u|_{(\bR\times\partial\cD)\cap Q_r(t_0,\bf{0})}=0$.
Then we define  the function $v(\tau,\zeta)=u(t,x)$ with the relations $t=r^2\tau+t_0$, $x=r\zeta$ so that $v$ is defined on $Q^{\cD}_1(0,\bf{0})$ and satisfies $v\in\mathcal{V}(Q_1^{\cD}(0,\bf{0}))$ and $\hat{\cL} v=0$ in $Q_1^{\cD}(0,\bf{0})$ with $v|_{(\bR\times\partial\cD)\cap Q_1(0,\bf{0})}=0$, where $\hat{\cL}$ is an operator for which we can apply the result of Steps 1 and 2 since
$$
\hat{a}_{ij}(\tau)=a_{ij}(r^2\tau+t_0),\quad i,j=1,2,3,
$$
where $a_{ij}(t)s$ are the diffusion coefficients in our fixed operator $\cL$. Hence, we have
\begin{align*}
|u(t,x)|=\left|v\left(\frac{t-t_0}{r^2},\frac{x}{r}\right)\right| &\le N\left|\frac{x}{r}\right|^{\lambda_1}\sup_{Q_1^{\cD}(0,\bf{0})}|v|\\
&= N\left(\frac{d(x,V)}{r}\right)^{\lambda_1}\sup_{Q_r^{\cD}(t_0,\bf{0})}|u|,\quad (t,x)\in Q^{\cD}_{r/2}(t_0,{\bf{0}}),
\end{align*}
where $N$ is the constant from Steps 1 and 2.

\textbf{Step 4.} The positive constant $\lambda_1$ we found in the previous steps serves as one of $\lambda$s mentioned in Definition \ref{def.critical lambda.edges.} (i). Hence, as the supremum of them,
$\hat{\lambda}^{+}_{o}(\cD,\cL)$ is strictly positive.
\end{proof}

\begin{thm} \label{A3}
Under the same conditions of  Definition \ref{def.critical lambda.edges.} the constants $\hat{\lambda}^{\pm}_{e,i}(\cD,\cL)$, $i=1,2,\ldots,N_0$, defined in Definition \ref{def.critical lambda.edges.}  (ii) are strictly positive. 
\end{thm}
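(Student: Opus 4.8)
The plan is to carry out the four-step argument in the proof of Theorem~\ref{A2} almost verbatim, with the vertex $V$ and the distance $d(x,V)$ replaced, respectively, by the edge $E_\mathcal{W}$ of a wedged domain $\mathcal{W}$ and the distance $d(x,E_\mathcal{W})$. By Remark~\ref{edge.def.2.} it is enough to exhibit a single $\lambda_1>0$ that is admissible in the definition of $\hat{\lambda}^+_e(\mathcal{W},\cL)$ for an arbitrary wedged domain $\mathcal{W}$; the case of $\hat{\lambda}^-_{e,i}$ then follows by running the same argument with $\tilde{\cL}$ in place of $\cL$, and $\hat{\lambda}^\pm_{e,i}(\cD,\cL)=\hat{\lambda}^\pm_e(\mathcal{W}_i,\cL)>0$ by Definition~\ref{def.critical lambda.edges.}~(ii). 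Since a rotation preserves the parabolicity constants $\nu_1,\nu_2$ and maps $E_{W_\kappa}$ isometrically onto $E_\mathcal{W}$, I may and will take $\mathcal{W}=W_\kappa$.

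First I would fix the reference cylinder $Q_1^{W_\kappa}(0,\mathbf{0})$, noting that $\mathbf{0}\in E_{W_\kappa}$, and set $\Gamma':=(\bR\times\partial W_\kappa)\cap\big\{Q_1(0,\mathbf{0})\setminus Q_{3/4}(0,\mathbf{0})\big\}$. Because $\kappa\in(0,2\pi)$, the complement $W_\kappa^c$ has a uniform lower bound on its volume density near every boundary point of $W_\kappa$ (including points of the edge, near which $W_\kappa^c$ forms a wedge of angle $2\pi-\kappa$), so $\Gamma'$ satisfies the (A)-condition of Definition~\ref{A} with constants $\theta_0,\rho_0$ depending only on $\kappa$; moreover $d(\Gamma',Q^{W_\kappa}_{1/2}(0,\mathbf{0}))=1/4$. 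For any operator $\hat{\cL}$ whose coefficients obey the bounds \eqref{uniform parabolicity} and any $v\in\cV_2^{1,0}(Q_1^{W_\kappa}(0,\mathbf{0}))$ with $\hat{\cL}v=0$ in $Q_1^{W_\kappa}(0,\mathbf{0})$ and $v|_{(\bR\times\partial W_\kappa)\cap Q_1(0,\mathbf{0})}=0$, normalized by $\sup_{Q_1^{W_\kappa}(0,\mathbf{0})}|v|=1$, the vanishing of $v$ on $\Gamma'$ gives $v\in H^{\alpha/2,\alpha}(\Gamma')$ for every $\alpha\in(0,1)$, and Theorem~10.1 in Chapter~III of \cite{LSU_1968} produces constants $\lambda_1=\lambda_1(\nu_1,\nu_2,\kappa)\in(0,1)$ and $N=N(\nu_1,\nu_2,\kappa)$ with $|v|_{H^{\lambda_1/2,\lambda_1}(Q_{1/2}^{W_\kappa}(0,\mathbf{0}))}\le N$. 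The one genuinely new point relative to Theorem~\ref{A2} is that I would use the vanishing of $v$ on the whole edge rather than at a single point: given $(\tau,\zeta)\in Q_{1/2}^{W_\kappa}(0,\mathbf{0})$, let $\bar{\zeta}$ be the orthogonal projection of $\zeta$ onto the line $E_{W_\kappa}$, so that $|\zeta-\bar{\zeta}|=d(\zeta,E_{W_\kappa})$ and $|\bar{\zeta}|\le|\zeta|<1/2$; then, since $v$ extends continuously up to $\bR\times\partial W_\kappa$ with $v(\tau,\bar{\zeta})=0$,
\[
|v(\tau,\zeta)|=|v(\tau,\zeta)-v(\tau,\bar{\zeta})|\le N\,d(\zeta,E_{W_\kappa})^{\lambda_1}.
\]

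The remaining steps are routine rescalings exactly as in Theorem~\ref{A2}. Removing the normalization by applying the above to $v/M$ with $M:=\sup_{Q_1^{W_\kappa}(0,\mathbf{0})}|v|$ (the case $M=0$ being trivial) gives $|v(\tau,\zeta)|\le N\,d(\zeta,E_{W_\kappa})^{\lambda_1}\sup_{Q_1^{W_\kappa}(0,\mathbf{0})}|v|$ on $Q_{1/2}^{W_\kappa}(0,\mathbf{0})$. For general $t_0\in\bR$, $r>0$ and $u\in\cV_2^{1,0}(Q_r^{W_\kappa}(t_0,\mathbf{0}))$ with $\cL u=0$ and $u|_{(\bR\times\partial W_\kappa)\cap Q_r(t_0,\mathbf{0})}=0$, I would set $v(\tau,\zeta):=u(r^2\tau+t_0,r\zeta)$; since $W_\kappa$ is invariant under $\zeta\mapsto r\zeta$, $v$ is defined on $Q_1^{W_\kappa}(0,\mathbf{0})$, solves $\hat{\cL}v=0$ with $\hat{a}_{ij}(\tau)=a_{ij}(r^2\tau+t_0)$, and vanishes on the corresponding part of $\bR\times\partial W_\kappa$. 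Using the previous bound together with $d(r\zeta,E_{W_\kappa})=r\,d(\zeta,E_{W_\kappa})$ (again because $E_{W_\kappa}$ is a line through the origin) yields, for $(t,x)\in Q_{r/2}^{W_\kappa}(t_0,\mathbf{0})$,
\[
|u(t,x)|\le N\Big(\frac{d(x,E_{W_\kappa})}{r}\Big)^{\lambda_1}\sup_{Q_r^{W_\kappa}(t_0,\mathbf{0})}|u|,
\]
so $\lambda_1$ is admissible and $\hat{\lambda}^+_e(W_\kappa,\cL)\ge\lambda_1>0$, which completes the argument.

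The only step that is not pure bookkeeping is the verification in Step~1 that the (A)-condition holds along the edge with constants depending only on $\kappa$ and that Theorem~III.10.1 of \cite{LSU_1968} indeed yields Hölder continuity up to and including the edge, not merely up to the interiors of the two flat faces; I expect this to be the main (though still routine) obstacle. Both facts hold because $\partial W_\kappa$, edge included, enjoys a uniform exterior volume-density bound whenever $\kappa\in(0,2\pi)$, which is precisely the hypothesis used in the boundary regularity theory of \cite[Chapter III]{LSU_1968}. Once this is settled, Steps~2--4 are word-for-word the same as in the proof of Theorem~\ref{A2}.
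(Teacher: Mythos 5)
Your proof is correct and rests on exactly the same two ingredients as the paper's: the boundary H\"older estimate of Theorem III.10.1 in \cite{LSU_1968} (justified via the (A)-condition of Definition \ref{A}, which the wedge satisfies with constants depending only on $\kappa$), followed by subtracting the value of $v$ at the nearest edge point to convert the H\"older seminorm bound into the decay $|v|\le N\,d(\cdot,E_{\mathcal{W}})^{\lambda_1}$, and then rescaling. The only difference is organizational: you run the argument directly on the model wedge $W_\kappa$, whose translation and dilation invariance along $E_{W_\kappa}$ lets a single rescaling $v(\tau,\zeta)=u(r^2\tau+t_0,r\zeta)$ cover all cylinders $Q_r^{W_\kappa}(t_0,\mathbf{0})$ appearing in Definition \ref{def.critical lambda.edges.}(ii), whereas the paper instead establishes the equivalent cone-based characterization of Remark \ref{edge.def.2.}, working in $Q_{r_i}^{\cD}(0,p_i)$ and adding two further rescaling steps (first to radii $r'<r_i$ about $p_i$, then to arbitrary $\xi_0\in E_i$ via the cone dilation $x=|\xi_0|\eta$). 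Your version is slightly shorter and matches the definition more directly; the paper's version has the by-product of verifying the intuitive characterization on $\cD$ itself. One cosmetic point: the reduction to $\mathcal{W}=W_\kappa$ follows from Definition \ref{def.critical lambda.edges.}(ii) together with the rotation-invariance of the parabolicity constants (as noted in the proof of Proposition \ref{250213305}), rather than from Remark \ref{edge.def.2.} as you cite.
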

\begin{proof}
We again use Theorem 10.1 in chapter III of \cite{LSU_1968} and  we only show $\hat{\lambda}^{+}_{e,i}(\cD,\cL)>0$. 

\textbf{Step 1.} First, we consider $Q_{r_i}^{\cD}(0,p_i)$, where $p_i$ is the $i$th vertex of the manifold $\cM$; $|p_i|=1$ and $r_i:=\min_{j\ne i}d(p_i,E_j)$. Note that $r_i$ is the largest $r$ satisfying $B_{r}(\xi_0)$ does not intersect the vertex $V$ and all edges except $E_i$.

 Then we consider $\Gamma ':=(\bR\times\partial\cD)\cap \{Q_{r_i}(0,p_i)\setminus Q_{3r_i/4}(0,p_i)\}$ and note that  $\Gamma'$ satisfies (A)-condition 
by the cone structure of our domain $\cD$ with $d(\Gamma', Q^{\cD}_{r_i/2}(0,p_i))=r_i/4$. 

Next we consider functions
$v\in\mathcal{V}(Q_{r_i}^{\cD}(0,p_i))$ satisfying $\hat{\cL} v=0$ in $Q_{r_i}^{\cD}(0,p_i)$ with $v|_{(\bR\times\partial\cD)\cap Q_{r_i}(0,p_i)}=0$ and $\sup_{Q_{r_i}^{\cD}(0,p_i)}|v|=1$, where $\hat{\cL}$ is {\emph{any}} operator explained in Step 1 of the proof of Theorem \ref{A2}. Again, note that any such $v$ belongs to $H^{\alpha/2,\alpha}(\Gamma')$ for any $\alpha\in(0,1)$ since $v=0$ on $\Gamma '$. 

Hence, by Theorem 10.1 in chapter III of \cite{LSU_1968}  there is a constant $\lambda_1>0$ depending only on $\nu_1,\nu_2, r_0, \rho_0$ and $\theta_0$   such that 
$$
|v|_{H^{\lambda_1/2,\lambda_1}(Q^{\cD}_{r_i/2}(0,p_i))}\leq N(\nu_1,\nu_2, r_0, \rho_0,\theta_0),
$$
 where $\rho_0,\theta_0$ are from  Definition \ref{A} and chosen for $\Gamma'$; note that $\theta_0$ depends on $\kappa_i$ and we will emphasize it.
In particular, there exists a constant $N$$=$$N(\nu_1,\nu_2,r_0,\rho_0,\theta_0, \kappa_i)$ such that
$$
\sup_{(\tau,\zeta)\in Q^{\cD}_{r_i/2}(0,p_i)}\frac{|v(\tau,\zeta)-v(\tau,\xi_i(\zeta))|}{|\zeta-\xi_i(\zeta)|^{\lambda_1}}\le N
$$
holds, where $\xi_i(\zeta)$ is on $E_i$ satisfying $|\zeta-\xi_i(\zeta)|=d(\zeta, E_i)$; we note
$|\xi_i(\zeta)-p_i|^2+|\zeta-\xi_i(\zeta)|^2=|\zeta-p_i|^2$ and hence $\xi_i(\zeta)\in B^{\cD}_{r_i/2}(p_i)$. Now, as $v(\tau,\xi_i(\zeta))=0$ for any $(\tau,\zeta)\in Q^{\cD}_{r_i/2}(0,p_i)$, we have
$$
|v(\tau,\zeta)|\le N \;d(\zeta,E_i)^{\lambda_1},\quad (\tau,\zeta)\in Q^{\cD}_{r_i/2}(0,p_i).
$$

\textbf{Step 2.} As in Step 2 of the proof of Theorem \ref{A2},  we have 
$$
|v(\tau,\zeta)|\le N \;d(\zeta,E_i)^{\lambda_1}\sup_{Q_{r_i}^{\cD}(0,p_i)}|v|,\quad (\tau,\zeta)\in Q^{\cD}_{r_i/2}(0,p_i)
$$
with $N$ in Step 1.

\textbf{Step 3.} Now, we consider any $t'_0\in\bR$,  $r'\in(0,r_i)$, and $Q_{r'}^{\cD}(t'_0,p_i)$. 

If  $w\in\mathcal{V}(Q_{r'}^{\cD}(t'_0,p_i))$ satisfies $\overline{\cL} w=0$ in $Q_{r'}^{\cD}(t'_0,p_i)$ with $w|_{(\bR\times\partial\cD)\cap Q_{r'}(t'_0,p_i)}=0$, where $\bar{\cL}$ is one of any operator $\hat{\cL}$ described in Step 1, then we define  the function $v(\tau,\zeta)=w(\sigma,\eta)$ with the relations $\sigma=\frac{(r')^2}{r_i^2}\tau+t'_0$, $\eta=\frac{r'}{r_i}(\zeta-p_i)+p_i=\frac{r'}{r_i}\zeta+\left(1-\frac{r'}{r_i}\right)p_i$ so that $v$ is defined on $Q^{\cD}_{r_i}(0,p_i)$ and satisfies $v\in\mathcal{V}(Q_{r_i}^{\cD}(0,p_i))$ and $\hat{\cL} v=0$ in $Q_{r_i}^{\cD}(0,p_i)$ with $v|_{(\bR\times\partial\cD)\cap Q_{r_i}(0,p_i)}=0$, where $\hat{\cL}$ is an operator for which we can apply the result of Steps 1 and 2 since
$$
\hat{a}_{ij}(\tau)=\overline{a}_{ij}\left(\frac{(r')^2}{r_i^2}\tau+t'_0\right),\quad i,j=1,2,3,
$$
where $\overline{a}_{ij}(\sigma)s$ are the diffusion coefficients in the operator $\overline{\cL}$.

Hence, noting that the relation $\eta=\frac{r'}{r_i}(\zeta-p_i)+p_i$ between $\eta$ and $\zeta$ gives 
$$
\zeta-p_i=\frac{r_i}{r'}(\eta-p_i),\quad
d(\zeta,E_i)=\frac{r_i}{r'} d(\eta,E_i),
$$
we have
\begin{align*}
|w(t,\eta)|&=\left|v\left(\frac{r_i^2}{(r')^2}(t-t'_0),\frac{r_i}{r'}(\eta-p_i)+p_i\right)\right|
\\ &\le N\;  d\left(\frac{r_i}{r'}(\eta-p_i)+p_i,E_i\right)^{\lambda_1}\sup_{Q_{r_i}^{\cD}(0,p_i)}|v|\\
&= N\left(\frac{r_i}{r'}d(\eta,E_i)\right)^{\lambda_1}\sup_{Q_{r'}^{\cD}(t'_0,p_i)}|u|,\quad (t,\eta)\in Q^{\cD}_{r'/2}(t'_0,p_i),
\end{align*}
where $N$ is the constant from Steps 1 and 2.

\textbf{Step 4.} Now, we consider any $t_0\in\bR$, $\xi_0\in E_i$, $r>0$ with which $B_r(\xi_0)$ does not intersect the vertex $V$ and all edges except $E_i$, 
and consider $Q_r^{\cD}(t_0,\xi_0)$. We note that $r\in (0,|\xi_0|r_i)$ since $\xi_0=|\xi_0|p_i$.

If $u\in\mathcal{V}(Q_r^{\cD}(t_0,\xi_0))$ satisfies $\cL u=0$ in $Q_r^{\cD}(t_0,\xi_0)$ with $u|_{(\bR\times\partial\cD)\cap Q_r(t_0,\xi_0)}=0$, then we define  the function $w(\sigma,\eta)=u(t,x)$ with the relations, $t=|\xi_0|^2\sigma$, $x=|\xi_0|\eta$ so that $w$ is defined on $Q^{\cD}_{r'}(t'_0,p_i)$, where $r'=r/|\xi_0|$, $t'_0=t_0/|\xi_0|^2$ and satisfies $w\in\mathcal{V}(Q_{r'}^{\cD}(t'_0,p_i))$ and $\overline{\cL} v=0$ in $Q_{r'}^{\cD}(t'_0,p_i)$ with $v|_{(\bR\times\partial\cD)\cap Q_{r'}^{\cD}(t'_0,p_i))}=0$, where $\overline{\cL}$ is an operator for which we can apply the result of Step 3 since
$$
\overline{a}_{ij}(\sigma)=a_{ij}(|\xi_0|^2\sigma),\quad i,j=1,2,3,
$$
where $a_{ij}(t)s$ are the diffusion coefficients in our fixed operator $\cL$. Hence, we have
\begin{align*}
|u(t,x)|&=\left|w\left(\frac{t}{|\xi_0|^2},\frac{x}{|\xi_0|}\right)\right|
\\ &\le N\;  \left(\frac{r_i}{r'}d\left(\frac{x}{|\xi_0|},E_i\right)\right)^{\lambda_1}\sup_{Q_{r'}^{\cD}(t'_0,p_i)}|v|\\
&=N\;  \left(\frac{r_i}{r'|\xi_0|}d\left(x,E_i\right)\right)^{\lambda_1}\sup_{Q_{r'}^{\cD}(t'_0,p_i)}|v|\\
&= N\;r_i^{\lambda_1}\left(\frac{d(x,E_i)}{r}\right)^{\lambda_1}\sup_{Q_r^{\cD}(t_0,\xi_0)}|u|,\quad (t,x)\in Q^{\cD}_{r/2}(t_0,\xi_0),
\end{align*}
where $N$ is the constant from Steps 1 and 2.

\textbf{Step 5.} The positive constant $\lambda_1$ we found in the previous steps serves as one of $\lambda$s mentioned in Definition \ref{def.critical lambda.edges.} (ii) and hence the supremum of them
$\hat{\lambda}^{+}_{e,i}(\cD,\cL)$ is strictly positive.
\end{proof}

The following lemma is essential in the proof of  Theorem \ref{A6}.
\begin{lemma}\label{A4}
Let $\mu, \epsilon_1, \epsilon_2$ be  constants satisfying $\mu^2<\frac{\nu_1}{\nu_2}\left(\mathcal{E}_0+\frac{1}{4}\right)$ and  $0<\epsilon_1<\epsilon_2\le 1$.  Then 
there exists a constant $N$  depending only on $\mu,\epsilon_1,\epsilon_2, \nu_1, \nu_2, \cM$ such that the inequality
$$
\int_{Q^{\mathcal{D}}_{\epsilon_1 r}(t_0,{\bf{0}})} |x|^{2\mu} |\nabla u|^2 dxdt+\int_{Q^{\mathcal{D}}_{\epsilon_1 r}(t_0,{\bf{0}})} |x|^{2\mu-2} |u|^2 dxdt \le N  r^{2\mu-2} \int_{Q^{\mathcal{D}}_{\epsilon_2 r}(t_0,{\bf{0}})} |u|^2 dxdt
$$
 holds  for any  $r>0$ and any function $u$ belonging to $\mathcal{V}_{loc}(Q^{\mathcal{D}}_r(t_0,{\bf{0}}))$ and satisfying 
$\mathcal{L}u=0$  in  $Q^{\mathcal{D}}_r(t_0,{\bf{0}})$, $u=0$ on $\mathbb{R}\times \partial\mathcal{D}$.
\end{lemma}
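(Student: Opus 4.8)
The plan is to establish a Caccioppoli-type (energy) estimate in weighted spaces, using the critical-exponent decay of $u$ near the vertex as the mechanism that makes the weight $|x|^{2\mu-2}$ integrable. First I would set $\lambda_0 := -\frac12+\sqrt{\mathcal{E}_0+\frac14}$ and observe that the hypothesis $\mu^2<\frac{\nu_1}{\nu_2}(\mathcal{E}_0+\frac14)$ is equivalent to $-\frac12+\sqrt{\frac{\nu_1}{\nu_2}}\sqrt{\mathcal{E}_0+\frac14}>|\mu|-\frac12$; combined with Proposition \ref{2501181139}(i), which gives $\hat\lambda_o^+(\cD,\cL)\ge -\frac32+\sqrt{\frac{\nu_1}{\nu_2}}\sqrt{\mathcal{E}_0+\frac14}$, one has to be a little careful — the clean statement to extract is that one can pick $\lambda\in(0,\hat\lambda_o^+(\cD,\cL))$ with $\lambda>\mu-1$ when $\mu>1$, and for $\mu\le 1$ the weight exponent $2\mu-2\le 0$ is already harmless near the vertex. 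In either case, Lemma \ref{21.11.02.2}(i) (or the rescaled pointwise bound \eqref{23081054111}) gives $\sup_{Q_\rho^{\cD}(t_0,{\bf 0})}|u|\le N(\rho/r)^{\lambda}\sup_{Q_{\epsilon_2 r}^{\cD}(t_0,{\bf 0})}|u|$ for $\rho\le \frac{\epsilon_2 r}{2}$, and by the interior/boundary local maximum principle (from \cite[Section III.10]{LSU_1968} using the (A)-condition \eqref{250102420}) one controls $\sup_{Q_{\epsilon_2 r/2}^{\cD}}|u|^2$ by $N(\epsilon_2 r)^{-5}\int_{Q_{\epsilon_2 r}^{\cD}(t_0,{\bf 0})}|u|^2\,dxdt$.

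Next I would run a dyadic decomposition: write $Q_{\epsilon_1 r}^{\cD}(t_0,{\bf 0})\setminus\{x={\bf 0}\}=\bigcup_{k\ge k_0}A_k$ where $A_k$ is the ''annular'' piece $Q_{2^{-k}\epsilon_1 r}^{\cD}(t_0,{\bf 0})\setminus Q_{2^{-k-1}\epsilon_1 r}^{\cD}(t_0,{\bf 0})$ so that on $A_k$ one has $|x|\simeq 2^{-k}\epsilon_1 r$. On each $A_k$ I apply the standard (unweighted) Caccioppoli inequality for $\cL$: for $u$ solving $\cL u=0$ with $u=0$ on $\bR\times\partial\cD$, a cutoff supported in $Q_{2^{-k+1}\epsilon_1 r}^{\cD}$ and equal to $1$ on the slightly smaller cylinder gives $\int_{A_k}|\nabla u|^2\,dxdt\le N(2^{-k}\epsilon_1 r)^{-2}\int_{\tilde A_k}|u|^2\,dxdt$, where $\tilde A_k$ is a fixed dilate of $A_k$ still inside $Q_{\epsilon_2 r}^{\cD}$. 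Here I use that $\cL$ has $x$-independent coefficients so the cutoff argument on the enlarged cylinders is uniform in $k$, and that $\overline{B^{\cD}_{2^{-k+2}\epsilon_1 r}({\bf 0})}$ is a genuine cone neighborhood of the vertex, so the boundary condition is compatible with the cutoff. Multiplying by $|x|^{2\mu}\simeq (2^{-k}\epsilon_1 r)^{2\mu}$ and by $|x|^{2\mu-2}\simeq (2^{-k}\epsilon_1 r)^{2\mu-2}$ respectively, summing over $k\ge k_0$, and bounding $\int_{\tilde A_k}|u|^2$ by $N (2^{-k}\epsilon_1 r)^{3+2}\sup_{\tilde A_k}|u|^2$ combined with the decay estimate $\sup_{\tilde A_k}|u|^2\le N 2^{-2k\lambda}\sup_{Q_{\epsilon_2 r}^{\cD}}|u|^2$, the sum becomes a geometric series $\sum_k (2^{-k}\epsilon_1 r)^{2\mu-2}(2^{-k}\epsilon_1 r)^{5}2^{-2k\lambda}$ which converges precisely because $2\mu-2+5+2\lambda>0$ (equivalently $\lambda>-\frac32+1-\mu$, true by our choice of $\lambda$ when $\mu>1$, and trivially when $\mu\le1$); this yields $\le N r^{2\mu-2}r^{5}\sup_{Q_{\epsilon_2 r}^{\cD}}|u|^2$, and then the local maximum principle converts $r^5\sup_{Q_{\epsilon_2 r}^{\cD}}|u|^2$ back to $N\int_{Q_{\epsilon_2 r}^{\cD}(t_0,{\bf 0})}|u|^2\,dxdt$.

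I would close by a scaling remark: both sides scale the same way under $u(t,x)\mapsto u(t_0+\rho^2 t,\rho x)$, so it suffices to prove the estimate for one convenient value of $r$, which simplifies the bookkeeping but is not essential if one keeps the powers of $r$ explicit as above. The main obstacle, and the place to be most careful, is the interface between the two regimes of $\mu$: for $\mu>1$ the negative power $|x|^{2\mu-2}$ is still nonnegative so no singularity issue arises, but then the $|x|^{2\mu}$-weighted gradient term needs the decay of $\nabla u$ near the vertex (obtained from Caccioppoli plus the sup-decay of $u$ on the enlarged annuli), and one must verify that the geometric series converges with the sharp exponent coming from $\hat\lambda_o^+(\cD,\cL)$ rather than some smaller value — this is exactly where the hypothesis $\mu^2<\frac{\nu_1}{\nu_2}(\mathcal{E}_0+\frac14)$ is used, and where matching it to Proposition \ref{2501181139}(i) requires the small algebraic manipulation noted above. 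A secondary technical point is ensuring all the enlarged cylinders $\tilde A_k$ (and the cutoff supports) remain inside $Q_{\epsilon_2 r}^{\cD}(t_0,{\bf 0})$ and touch $\partial\cD$ only along the cone's faces, so that the interior Caccioppoli estimate with Dirichlet data applies uniformly; this follows from the self-similar (conic) structure of $\cD$ near ${\bf 0}$ and the fixed ratio $\epsilon_1<\epsilon_2$.
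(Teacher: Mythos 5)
Your proposal has a genuine structural flaw: it is circular. You derive the decay rate $\lambda$ from Proposition \ref{2501181139}(i), whose quantitative lower bound $\hat{\lambda}^{+}_{o}(\cD,\cL)\geq -\frac{3}{2}+\sqrt{\tfrac{\nu_1}{\nu_2}}\sqrt{\mathcal{E}_0+\tfrac{1}{4}}$ is itself proved (via Theorem \ref{A6}) by \emph{using} Lemma \ref{A4}. The only input available independently of Lemma \ref{A4} is Theorem \ref{A2}, which gives some small unspecified $\lambda_1>0$; that is nowhere near enough when $\mu$ is close to $-\sqrt{\tfrac{\nu_1}{\nu_2}(\mathcal{E}_0+\tfrac14)}$, which is precisely the regime in which the lemma is applied in Theorem \ref{A6}. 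Even ignoring the circularity, the exponent bookkeeping does not close: on a spatial annulus $|x|\simeq 2^{-k}\epsilon_1 r$ (note that your sets $Q_{2^{-k}\epsilon_1 r}\setminus Q_{2^{-k-1}\epsilon_1 r}$ contain points with $|x|$ arbitrarily small, so they do not satisfy $|x|\simeq 2^{-k}\epsilon_1 r$; one must decompose in space only, with volume $\simeq (2^{-k}r)^3\cdot r^2$), the sum for $\int |x|^{2\mu-2}|u|^2$ converges only if $\lambda>-\mu-\tfrac12$, whereas the best bound one could ever import is $\lambda<-\mu-\tfrac32+(\text{small})$ — a gap of one full unit. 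Your sign discussion of the two regimes is also reversed: $|x|^{2\mu-2}$ is singular at the vertex exactly when $\mu<1$, not when $\mu>1$.

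The paper's route (following Lemma A.1 of Kozlov--Nazarov) goes in the opposite logical direction and uses no pointwise decay at all: one tests the equation against $|x|^{2\mu}u$ times cutoffs and absorbs the resulting singular terms by means of the Hardy-type inequality $\int_{\cD}|\nabla f|^2\,dx\geq \big(\mathcal{E}_0+\tfrac14\big)\int_{\cD}|x|^{-2}f^2\,dx$ on the cone (this is where \eqref{250121350} and the hypothesis $\mu^2<\tfrac{\nu_1}{\nu_2}(\mathcal{E}_0+\tfrac14)$ enter, guaranteeing coercivity of the weighted quadratic form), followed by a Caccioppoli iteration from $\epsilon_2 r$ down to $\epsilon_1 r$. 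The weighted energy estimate is thus the \emph{input} for the pointwise decay of Theorem \ref{A6}, not a consequence of it. To repair your argument you would need to replace the appeal to the critical exponent by this weighted integration-by-parts mechanism, at which point the dyadic decomposition becomes unnecessary.
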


\begin{proof}
The proof  is almost the  same as that of Lemma A.1 of \cite{Kozlov_Nazarov_2014} and so we omit it.  The only difference is that we use $\nu_1,\nu_2$ instead of $\nu,\nu^{-1}$. \end{proof}

\begin{remark}
 In the expression $\mu^2<\frac{\nu_1}{\nu_2}\left(\mathcal{E}_0+\frac{1}{4}\right)$   the number 1 means $(d-2)^2$; our dimension is 3. 
 It is worth mentioning the reason for the appearance of the value $\mathcal{E}_0+\frac{(d-2)^2}{4}$. It is due to the necessary computation of the type $\int_{\cD}|\nabla f(x)|^2dx$ in relation with $\int_{\cD}|x|^{-2}f^2dx$ for the functions $f$ satisfying $f|_{\partial \cD}=0$. As our domain $\cD$ is an infinite cone, the computation of $\int_{\cD}|\nabla f(x)|^2dx$ involves the surface integration on $\cM$ and the integration on the radial direction. Hence, $\mathcal{E}_0$, the first or the smallest eigenvalue of the Laplace-Beltrami operator on $\cM$ with zero Dirichlet boundary condition on $\partial \cM$, appears; see the definition of $\mathcal{E}_0$, \eqref{250121350}. On the other hand, on radial direction we use a version of the one dimensional Hardy inequality and this indeed gives $\frac14=\frac{(d-2)^2}{4}$. Combining two computations, we have
$$
\int_{\cD}|\nabla f(x)|^2dx \ge \left(\mathcal{E}_0+\frac14\right)\int_{\cD}|x|^{-2}f^2dx
$$
in our hands and utilize it when we prove Lemma \ref{A4}.
\end{remark}

\begin{thm}\label{A6}
 Under the same conditions of  Definition \ref{def.critical lambda.edges.} the constants $\hat{\lambda}^{\pm}_{o}(\cD,\cL)$ defined in Definition \ref{def.critical lambda.edges.} (i) satisfy
\begin{equation}\label{202502081324}
\hat{\lambda}^{\pm}_{o}(\cD,\cL)\geq -\frac{3}{2}+\sqrt{\frac{\nu_1}{\nu_2}}\sqrt{\mathcal{E}_0+\frac{1}{4}}\,.
\end{equation}
\end{thm}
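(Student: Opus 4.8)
The plan is to run the argument of \cite[Theorem 2.4.7]{Kozlov_Nazarov_2014}, observing that it uses the underlying domain only through the volume--density condition \eqref{250102420} and through the weighted energy inequality of Lemma \ref{A4}; polyhedral cones enjoy both, so the proof transfers verbatim. First I would scale: putting $v(\tau,\zeta):=u(t_0+r^2\tau,r\zeta)$ reduces everything to the case $t_0=0$, $r=1$ and replaces $\cL$ by an operator of the same type (coefficients $a_{ij}(t_0+r^2\tau)$, still satisfying \eqref{uniform parabolicity} with the same $\nu_1,\nu_2$), exactly as in Step~3 of the proof of Theorem \ref{A2}. Since $d(x,V)=|x|$, and since the claim is already contained in Theorem \ref{A2} when the right--hand side of \eqref{202502081324} is $\le 0$, it suffices to prove: for every $\mu$ with $0<\mu^2<\frac{\nu_1}{\nu_2}(\mathcal{E}_0+\tfrac14)$ there is $N=N(\cD,\cL,\mu)>0$ such that any $u\in\cV_2^{1,0}(Q_1^{\cD}(0,\mathbf{0}))$ with $\cL u=0$ in $Q_1^{\cD}(0,\mathbf{0})$ and $u|_{(\bR\times\partial\cD)\cap Q_1(0,\mathbf{0})}=0$ satisfies
$$
|u(t,x)|\le N\,|x|^{\mu-3/2}\sup_{Q_1^{\cD}(0,\mathbf{0})}|u|,\qquad (t,x)\in Q_{1/2}^{\cD}(0,\mathbf{0}).
$$
Taking the supremum over admissible $\mu$ (so that $\mu-3/2$ exhausts the exponents strictly below $-\tfrac32+\sqrt{\nu_1/\nu_2}\sqrt{\mathcal{E}_0+\tfrac14}$) then gives \eqref{202502081324}.

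The core is an $L_2$ Morrey--type decay for the local energy $m(\rho):=\int_{Q_\rho^{\cD}(0,\mathbf{0})}|u|^2\,dx\,dt$, $\rho\in(0,\tfrac12]$: one wants $m(\rho)\le N\rho^{\,2+2\mu}\,m(\tfrac12)$. This is where Lemma \ref{A4} is used: applied with $\epsilon_1=\tfrac12$, $\epsilon_2=1$ at an arbitrary scale $\rho\le\tfrac12$ it gives
$$
\int_{Q^{\cD}_{\rho/2}(0,\mathbf{0})}|x|^{2\mu}|\nabla u|^2\,dx\,dt+\int_{Q^{\cD}_{\rho/2}(0,\mathbf{0})}|x|^{2\mu-2}|u|^2\,dx\,dt\le N\rho^{2\mu-2}\,m(\rho).
$$
Splitting $Q^{\cD}_{\rho/2}(0,\mathbf{0})$ into parabolic shells over the annuli $\{2^{-k-1}\rho<|x|\le 2^{-k}\rho\}$, on each of which $|x|^{2\mu-2}\simeq(2^{-k}\rho)^{2\mu-2}$, and summing, one arrives at a self--improving relation for $m$ whose solution is the asserted decay; this is precisely the iteration carried out in \cite[Theorem 2.4.7]{Kozlov_Nazarov_2014}. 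The positivity from Theorem \ref{A2} is used here to guarantee that the weighted integrals above are finite a~priori and to furnish the base case of the iteration in the shape of a (possibly small) genuine decay $|u(t,x)|\le N|x|^{\lambda_1}\sup_{Q_1^{\cD}}|u|$ with $\lambda_1>0$.

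It then remains to convert the decay of $m$ into the pointwise bound. By the local boundedness estimate for parabolic equations with measurable coefficients up to a boundary portion satisfying the (A)-condition --- the same tool from \cite[Chapter III]{LSU_1968} already invoked in Theorem \ref{A2} --- one has, for $\rho\le\tfrac12$,
$$
\sup_{Q^{\cD}_{\rho/2}(0,\mathbf{0})}|u|^2\le N\rho^{-5}\,m(\rho)\le N\rho^{2\mu-3}\,m(\tfrac12)\le N\rho^{2\mu-3}\sup_{Q_1^{\cD}(0,\mathbf{0})}|u|^2,
$$
and since the same reasoning yields the Morrey decay for cylinders with apex at any $(t_0',\mathbf{0})$, a standard covering argument --- applying the last estimate on $Q^{\cD}_{c|x|}(t,x)\subset Q^{\cD}_{2c|x|}(t,\mathbf{0})$ for a small fixed $c$ --- promotes it to $|u(t,x)|\le N|x|^{\mu-3/2}\sup_{Q_1^{\cD}}|u|$ on $Q^{\cD}_{1/2}(0,\mathbf{0})$, with the same exponent bookkeeping as in Steps~3--4 of the proof of Theorem \ref{A2}.

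The hard part is the Morrey decay $m(\rho)\le N\rho^{2+2\mu}m(\tfrac12)$ with the \emph{sharp} exponent $2+2\mu$ tied to the spectral constant $\mathcal{E}_0+\tfrac14$: near the vertex the weight $|x|^{2\mu-2}$ is comparable neither to a constant from above nor from below, so the shell decomposition produces the growing factors $2^{k(2\mu-2)}$ and the telescoped sum must be organized (through an Abel summation / Gronwall step) so that these are absorbed. Carrying out this bookkeeping, together with checking that each analytic ingredient of \cite[Theorem 2.4.7]{Kozlov_Nazarov_2014} --- interior and boundary De Giorgi--Nash--Moser estimates, and the weighted Caccioppoli inequality of Lemma \ref{A4} --- depends on $\cD$ solely through \eqref{250102420}, completes the proof.
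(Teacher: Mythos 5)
Your overall strategy---rescale to unit size, use Lemma \ref{A4} as the engine, and convert an $L_2$ bound into a pointwise one via a local boundedness estimate at scale $|x|$---is the same as the paper's, but there is a sign error at the crucial step, and as written your argument does not produce the claimed exponent. You take $\mu>0$ (aiming at the exponent $\mu-\tfrac32$) and apply Lemma \ref{A4} with the weight $|x|^{2\mu-2}$. On the shell $\{|x|\simeq 2^{-k}\rho\}$ this weight is $\simeq 2^{k(2-2\mu)}\rho^{2\mu-2}$, so your decomposition yields at best $m(\sigma)\le N(\sigma/\rho)^{2-2\mu}m(\rho)$, i.e.\ the decay exponent $2-2\mu$ rather than the needed $2+2\mu$ (and for $\mu\ge 1$ it yields nothing, since the weight is then bounded near the vertex). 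Feeding $2-2\mu$ into the local boundedness step gives $|u|\le N|x|^{-3/2-\mu}$, a useless negative exponent. To obtain $m(\rho)\le N\rho^{2+2\mu}m(1)$ you must invoke Lemma \ref{A4} with the exponent $-\mu$, i.e.\ with the weight $|x|^{-2\mu-2}$; this is admissible because the hypothesis $\mu^2<\frac{\nu_1}{\nu_2}\bigl(\mathcal{E}_0+\tfrac14\bigr)$ is even in $\mu$, and it is exactly what the paper does (its $\mu$ is taken near $-\sqrt{\nu_1/\nu_2}\,\sqrt{\mathcal{E}_0+1/4}$ and the target exponent is written as $-\tfrac32-\mu$).

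Once the sign is corrected, the part you single out as hard (shells, a self-improving relation, Abel summation/Gronwall) evaporates: since $|x|^{-2\mu-2}\ge\rho^{-2\mu-2}$ on $Q_\rho^{\cD}(0,\mathbf{0})$, a single application of Lemma \ref{A4} gives
\begin{equation*}
\rho^{-2\mu-2}m(\rho)\le \int_{Q_\rho^{\cD}(0,\mathbf{0})}|x|^{-2\mu-2}|u|^2\,dx\,dt\le N\,m(1),
\end{equation*}
with no iteration; the genuinely hard content (the sharp constant $\mathcal{E}_0+\tfrac14$) is entirely inside Lemma \ref{A4}, which both you and the paper import from Kozlov--Nazarov. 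The paper in fact dispenses with the Morrey-decay formulation altogether: for $(t,x)$ with $|x|=\rho$ it applies the local maximum estimate on the parabolic annulus $(t-\rho^2/4,t]\times\bigl((B_{3\rho/2}(\mathbf{0})\setminus B_{\rho/2}(\mathbf{0}))\cap\cD\bigr)$, inserts the weight there using $|y|\simeq\rho$, and then applies Lemma \ref{A4} once at the top scale $r$. Also, the positivity from Theorem \ref{A2} is not needed to make the weighted integrals finite or to start an iteration---finiteness is part of the conclusion of Lemma \ref{A4}. With the sign fixed and the iteration removed, your route and the paper's coincide.
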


\begin{proof}
In \eqref{202502081324}  the number 3 represents our dimension 3. Below we just show \eqref{202502081324} for $\hat{\lambda}^{+}_{o}$; the case of $\hat{\lambda}^{-}_{o}$ can be similarly done.

{\textbf{Step 1.}} We show that  for any $\mu\in\mathbb{R}$  satisfying
$\mu^2<\frac{\nu_1}{\nu_2}\left(\mathcal{E}_0+\frac{1}{4}\right)$, there exists a constant $N$ depending only on $\mathcal{M}, \nu_1, \nu_2, \mu$ - or only on $\cD,\cL,\mu$ - such that
\begin{equation*}
|u(t,x)|\le N \left(\frac{|x|}{r}\right)^{-\frac{3}{2}-\mu}\sup_{Q^{\mathcal{D}}_{r}(t_0,\bf{0})}\ |u|,
\quad
\forall \;(t,x)\in Q^{\mathcal{D}}_{r/2}(t_0,\bf{0})
\end{equation*}
  for any $t_0>0$, $r>0$, and $u$  belonging to $\mathcal{V}_{loc}(Q^{\mathcal{D}}_r(t_0,\bf{0}))$ and satisfying 
\begin{equation*}
\mathcal{L}u=0\quad \text{in}\; Q^{\mathcal{D}}_r(t_0,{\bf{0}})\quad ; \;\quad
u(t,x)=0\quad\text{for}\;\; x\in\partial\mathcal{D}.
\end{equation*}
Then considering $\mu$ close to the value $-\sqrt{\frac{\nu_1}{\nu_2}}\sqrt{\mathcal{E}_0+\frac{1}{4}}$ and noting $\hat{\lambda}^{+}_{o}\ge -\frac32-\mu$ for such $\mu$,  we see that \eqref{202502081324} holds for $\hat{\lambda}^{+}_{o}$.

Below we note that we may assume $t_0=0$.

{\textbf{Step 2.}} Take any function $u$ satisfying the conditions in Step 1 with $t_0=0$ and take any $(t,x)\in Q^{\mathcal{D}}_{r/2}(0,\bf{0})$. Let us denote  
$$\rho=|x| \,\Big(<\frac{r}{2}\,\Big), \quad \mathcal{A}^{\cD}_{\rho}=(t-\rho^2/4,t]\times ((B_{\frac{3}{2}\rho}(0)\setminus B_{\frac12 \rho}(0))\cap \cD)\subset Q^{\cD}_{\frac34 r}(0,{\bf{0}}).
$$ 
Then as in the proof of \cite[Theorem 2.4.7]{Kozlov_Nazarov_2014}, we have  
\begin{eqnarray*}
|u(t,x)|^2&\le& N \rho^{-3-2}\int_{\mathcal{A}^{\cD}_{\rho}} |u(\tau,y)|^2dyd\tau\nonumber\\
&\le&N \rho^{-3-2\mu}\int_{\mathcal{A}^{\cD}_{\rho}}|y|^{2\mu-2} |u(\tau,y)|^2dyd\tau
\end{eqnarray*}
and by Lemma \ref{A4} with $\epsilon_1=\frac34, \epsilon_2=1$ applied we have
\begin{eqnarray}
|u(t,x)|^2 &\le& N \rho^{-3-2\mu}r^{2\mu-2}\int_{Q^{\cD}_{r}(0,{\bf{0}})} |u(\tau,y)|^2dyd\tau\nonumber\\
&\le& N \rho^{-3-2\mu}r^{2\mu-2} \cdot |Q^{\cD}_{r}(0,{\bf{0}})|_{3+1}\cdot \sup_{Q^{\cD}_{r}(0,{\bf{0}})} |u|^2\nonumber\\
&=&N \rho^{-3-2\mu}r^{2\mu-2} \cdot r^{3+2}\cdot \sup_{Q^{\cD}_{r}(0,{\bf{0}})} |u|^2.\nonumber
\end{eqnarray}
All the constants $N$ in Step 2 are not identical, but depend only on $\mathcal{M}, \nu_1, \nu_2, \mu$. Hence, the claim in Step 1 holds.

\end{proof}


\begin{thebibliography}{10}
	
	\bibitem{Aronson1967}
	{\sc Aronson, D.~G.}
	\newblock Bounds for the fundamental solution of a parabolic equation.
	\newblock {\em Bull. Amer. Math. Soc. 73\/} (1967), 890--896.
	
	\bibitem{Aronson1968}
	{\sc Aronson, D.~G.}
	\newblock Non-negative solutions of linear parabolic equations.
	\newblock {\em Ann. Scuola Norm. Sup. Pisa Cl. Sci. (3) 22\/} (1968), 607--694.
	
	\bibitem{BCG1983}
	{\sc Betz, C., C\'amera, G.~A., and Gzyl, H.}
	\newblock Bounds for the first eigenvalue of a spherical cap.
	\newblock {\em Appl. Math. Optim. 10}, 3 (1983), 193--202.
	
	\bibitem{Chavel}
	{\sc Chavel, I.}
	\newblock {\em Eigenvalues in {R}iemannian geometry}, vol.~115 of {\em Pure and
		Applied Mathematics}.
	\newblock Academic Press, Inc., Orlando, FL, 1984.
	
	\bibitem{Cho}
	{\sc Cho, S.}
	\newblock Two-sided global estimates of the {G}reen's function of parabolic
	equations.
	\newblock {\em Potential Anal. 25}, 4 (2006), 387--398.
	
	\bibitem{ChoDongKim2007}
	{\sc Cho, S., Dong, H., and Kim, S.}
	\newblock On the {G}reen's matrices of strongly parabolic systems of second
	order.
	\newblock {\em Indiana Univ. Math. J. 57}, 4 (2008), 1633--1677.
	
	\bibitem{CKL2019+}
	{\sc Cioica-Licht, P.~A., Kim, K.-H., and Lee, K.}
	\newblock On the regularity of the stochastic heat equation on polygonal
	domains in {$\mathbb{R}^2$}.
	\newblock {\em J. Differential Equations 267}, 11 (2019), 6447--6479.
	
	\bibitem{Daners}
	{\sc Daners, D.}
	\newblock Domain perturbation for linear and semi-linear boundary value
	problems.
	\newblock In {\em Handbook of differential equations: stationary partial
		differential equations. {V}ol. {VI}}, Handb. Differ. Equ.
	Elsevier/North-Holland, Amsterdam, 2008, pp.~1--81.
	
	\bibitem{Dau1988}
	{\sc Dauge, M.}
	\newblock {\em Elliptic boundary value problems on corner domains}, vol.~1341
	of {\em Lecture Notes in Mathematics}.
	\newblock Springer-Verlag, Berlin, 1988.
	
	\bibitem{Davies87}
	{\sc Davies, E.~B.}
	\newblock Explicit constants for {G}aussian upper bounds on heat kernels.
	\newblock {\em Amer. J. Math. 109}, 2 (1987), 319--333.
	
	\bibitem{Davies}
	{\sc Davies, E.~B.}
	\newblock {\em Heat kernels and spectral theory}, vol.~92 of {\em Cambridge
		Tracts in Mathematics}.
	\newblock Cambridge University Press, Cambridge, 1989.
	
	\bibitem{Fabes}
	{\sc Fabes, E.~B., and Stroock, D.~W.}
	\newblock A new proof of {M}oser's parabolic {H}arnack inequality using the old
	ideas of {N}ash.
	\newblock {\em Arch. Rational Mech. Anal. 96}, 4 (1986), 327--338.
	
	\bibitem{Fla1990}
	{\sc Flandoli, F.}
	\newblock Dirichlet boundary value problem for stochastic parabolic equations:
	compatibility relations and regularity of solutions.
	\newblock {\em Stochastics Stochastics Rep. 29}, 3 (1990), 331--357.
	
	\bibitem{Gri1985}
	{\sc Grisvard, P.}
	\newblock {\em Elliptic problems in nonsmooth domains}, vol.~24 of {\em
		Monographs and Studies in Mathematics}.
	\newblock Pitman (Advanced Publishing Program), Boston, MA, 1985.
	
	\bibitem{Gri1992}
	{\sc Grisvard, P.}
	\newblock {\em Singularities in boundary value problems}, vol.~22 of {\em
		Recherches en Math\'ematiques Appliqu\'ees [Research in Applied
		Mathematics]}.
	\newblock Masson, Paris; Springer-Verlag, Berlin, 1992.
	
	\bibitem{Kim2004}
	{\sc Kim, K.-H.}
	\newblock On stochastic partial differential equations with variable
	coefficients in {$C^1$} domains.
	\newblock {\em Stochastic Process. Appl. 112}, 2 (2004), 261--283.
	
	\bibitem{Green}
	{\sc Kim, K.-H., Lee, K., and Seo, J.}
	\newblock A refined {G}reen's function estimate of the time measurable
	parabolic operators with conic domains.
	\newblock {\em Potential Anal. 56}, 2 (2022), 317--331.
	
	\bibitem{ConicSPDE}
	{\sc Kim, K.-H., Lee, K., and Seo, J.}
	\newblock Sobolev space theory and {H}\"older estimates for the stochastic
	partial differential equations on conic and polygonal domains.
	\newblock {\em J. Differential Equations 340\/} (2022), 463--520.
	
	\bibitem{KimXu2022}
	{\sc Kim, S., and Xu, L.}
	\newblock Green's function for second order parabolic equations with singular
	lower order coefficients.
	\newblock {\em Commun. Pure Appl. Anal. 21}, 1 (2022), 1--21.
	
	\bibitem{Kozlov_Nazarov_2009}
	{\sc Kozlov, V., and Nazarov, A.}
	\newblock The {D}irichlet problem for non-divergence parabolic equations with
	discontinuous in time coefficients.
	\newblock {\em Math. Nachr. 282}, 9 (2009), 1220--1241.
	
	\bibitem{Kozlov_Nazarov_2014}
	{\sc Kozlov, V., and Nazarov, A.}
	\newblock The {D}irichlet problem for non-divergence parabolic equations with
	discontinuous in time coefficients in a wedge.
	\newblock {\em Math. Nachr. 287}, 10 (2014), 1142--1165.
	
	\bibitem{Koz1991}
	{\sc Kozlov, V.~A.}
	\newblock Asymptotics of the {G}reen function and {P}oisson kernels of a mixed
	parabolic problem in a cone. {II}.
	\newblock {\em Z. Anal. Anwendungen 10}, 1 (1991), 27--42.
	
	\bibitem{KozMazRos1997}
	{\sc Kozlov, V.~A., Maz'ya, V.~G., and Rossmann, J.}
	\newblock {\em Elliptic boundary value problems in domains with point
		singularities}, vol.~52 of {\em Mathematical Surveys and Monographs}.
	\newblock American Mathematical Society, Providence, RI, 1997.
	
	\bibitem{Krylov1994}
	{\sc Krylov, N.~V.}
	\newblock A {$W^n_2$}-theory of the {D}irichlet problem for {SPDE}s in general
	smooth domains.
	\newblock {\em Probab. Theory Related Fields 98}, 3 (1994), 389--421.
	
	
	\bibitem{LSU_1968}
	{\sc Ladyzhenskaya, O.~A., Solonnikov, V.~A., and Ural'tseva, N.~N.}
	\newblock {\em Linear and quasilinear equations of parabolic type}, vol.~Vol.
	23 of {\em Translations of Mathematical Monographs}.
	\newblock American Mathematical Society, Providence, RI, 1968.
	
	\bibitem{MaRo2010}
	{\sc Maz'ya, V., and Rossmann, J.}
	\newblock {\em Elliptic equations in polyhedral domains}, vol.~162 of {\em
		Mathematical Surveys and Monographs}.
	\newblock American Mathematical Society, Providence, RI, 2010.
	
	\bibitem{Mazonka2012}
{\sc Mazonka, O.}
\newblock Solid angle of conical surfaces, polyhedral cones, and intersecting spherical caps.
\newblock {\em arXiv preprint, arXiv: 1205.1396\/} (2012)

	\bibitem{Nash}
	{\sc Nash, J.}
	\newblock Continuity of solutions of parabolic and elliptic equations.
	\newblock {\em Amer. J. Math. 80\/} (1958), 931--954.
	
	\bibitem{Naz2001}
	{\sc Nazarov, A.~I.}
	\newblock {$L_p$}-estimates for a solution to the {D}irichlet problem and to
	the {N}eumann problem for the heat equation in a wedge with edge of arbitrary
	codimension.
	\newblock vol.~106. 2001, pp.~2989--3014.
	
	\bibitem{Semenov}
	{\sc Semenov, Y.~A.}
	\newblock Regularity theorems for parabolic equations.
	\newblock {\em J. Funct. Anal. 231}, 2 (2006), 375--417.
	
	\bibitem{Seo202411}
	{\sc Seo, J.}
	\newblock Sobolev space theory for the heat and time-fractional heat equations
	in non-smooth domains.
	\newblock {\em arXiv preprint, arXiv:2411.06761\/} (2024).
	
\end{thebibliography}
\end{document}